\newcommand{\EE}{\mathbb{E}}
\DeclarePairedDelimiter{\dotp}{\langle}{\rangle}
\def \diag {\rm diag}
\newcommand{\blue}[1]{{\color{black}#1}}
\newcommand\numberthis{\addtocounter{equation}{1}\tag{\theequation}}
\newtheorem{theorem}{Theorem}
\newtheorem{lemma}{Lemma}
\def\thanks#1{\protected@xdef\@thanks{\@thanks
        \protect\footnotetext{#1}}}
\algrenewcommand{\algorithmiccomment}[1]{\hskip0em$\triangleright$ #1}
\newcommand{\FullTitle}{Solving Stochastic Compositional Optimization is Nearly as Easy as Solving Stochastic Optimization}
\title[\FullTitle]{\FullTitle}
 \thanks{
 The work of T. Chen was partially supported by National
Science Foundation under the project NSF 2047177 and the RPI-IBM Artificial Intelligence Research Collaboration (AIRC). The work of Y. Sun was partially supported by ONR Grant N000141712162 and AFOSR MURI FA9550-18-1-0502.}
\begin{document}

\maketitle

%\doparttoc % Tell to minitoc to generate a toc for the parts
%\faketableofcontents % Run a fake tableofcontents command for the partocs

\begin{abstract}
% Stochastic compositional optimization has recently gained growing popularity in applications such as reinforcement learning and meta learning. 
% This paper presents a new class of stochastic compositional gradient methods for stochastic compositional optimization.
% Albeit the compositional structure, our new algorithms run in a single-time scale, single loop, use fixed batch size, and guarantee to converge in the same rate as the stochastic gradient descent (SGD) algorithm for the stochastic non-compositional problems. This is achieved by a careful modification of the popular stochastic compositional gradient method. 
% The resultant method is thus termed \textbf{S}tochastically \textbf{C}orrected \textbf{S}tochastic \textbf{C}ompositional gradient --- justifying our acronym \textbf{SCSC} used henceforth. 
Stochastic compositional optimization generalizes classic (non-compositional) stochastic optimization to the minimization of compositions of functions. Each composition may introduce an additional expectation. The series of expectations may be nested. Stochastic compositional optimization is gaining popularity in applications such as reinforcement learning and meta learning. 
This paper presents a new \textbf{S}tochastically \textbf{C}orrected \textbf{S}tochastic \textbf{C}ompositional gradient method (\textbf{SCSC}).
SCSC runs in a single-time scale with a single loop, uses a fixed batch size, and guarantees to converge at the same rate as the stochastic gradient descent (SGD) method for non-compositional stochastic optimization. This is achieved by making a careful improvement to a popular stochastic compositional gradient method. 
It is easy to apply SGD-improvement techniques to accelerate SCSC. This helps SCSC achieve state-of-the-art performance for stochastic compositional optimization. 
In particular, we apply Adam to SCSC, and the exhibited rate of convergence matches that of the original Adam on non-compositional stochastic optimization. 
We test SCSC using the model-agnostic meta-learning and risk-averse portfolio management tasks.
\end{abstract}

\section{Introduction}\label{sec.intro}

In this paper, we consider stochastic compositional optimization problems of the form
\begin{align}\label{opt0}
	&\min_{\bbtheta\in \mathbb{R}^d}~~~F(\bbtheta):=f_N\left( f_{N-1}( \cdots  f_1(\bbtheta) \cdots)\right)\\
	&\qquad{\rm with}~~~f_n(\cdot):=\mathbb{E}_{\xi_n}\left[f_n(\cdot;\xi_n)\right],\quad n=1,2,\dots,N,\nonumber
\end{align}
where {\small$\bbtheta\in \mathbb{R}^d$} is the optimization variable, {\small$f_n:\mathbb{R}^{d_n}\rightarrow\mathbb{R}^{d_{n+1}}, n=1,2,\ldots, N$} (with $d_{N+1}=1$ and $d_1=d$) are smooth but possibly nonconvex functions, and $\xi_1,\ldots, \xi_N$ are independent random variables. 
% Since the objective function in \eqref{opt0} is the composition of two functions, it is also referred to as the two-level compositional problem, and its generalization to multi-level problem will be discussed in Section \ref{sec.multi}.
The compositional formulation \eqref{opt0} covers a broader range of applications than the classical non-compositional stochastic optimization and the empirical risk minimization problem in machine learning, e.g., \cite{bottou2016}. 
In the classical non-compositional cases, the problem is to solve {\small$\min_{\bbtheta\in \mathbb{R}^d}\,\mathbb{E}_{\xi}\left[f(\bbtheta;\xi)\right]$}, which can be formulated under \eqref{opt0} when $f_1(\bbtheta)$ is a scalar function and $f_2, \cdots, f_N$ are the scalar identity maps, e.g., {\small$d_{N+1}=d_N=\cdots=d_2=1$} and {\small$d_1=d$}. 

Problem \eqref{opt0} naturally arises in a number of other areas. In reinforcement learning, finding the value function of a given policy (often referred to as \emph{policy evaluation}) can be casted as a compositional optimization problem; see e.g., \cite{dann2014policy,wang2017mp}. In financial engineering, the risk-averse portfolio optimization can be also formulated in similar form \cite{shapiro2009}. 
%, which have gained great popularity during the last two decades. Averaging, acceleration, and variance reduction are all powerful techniques designed for the classical non-compositional stochastic optimization. 
A recent application of \eqref{opt0} is the \emph{model-agnostic meta learning} (MAML), which is under a broader concept of few-shot meta learning; see e.g., \cite{finn2017icml}. 
It is a powerful tool for learning a new task by using the prior experience from related tasks. 
Consider a set of empirically observed tasks collected in {\small${\cal M}:=\{1,\ldots,M\}$} drawn from a certain task distribution. By a slight abuse of notation, each task $m$ has its local data $\xi_m$ from a certain distribution, which defines its loss function as {\small$F_m(\bbtheta):=\mathbb{E}_{\xi_m}\left[f(\bbtheta;\xi_m)\right],\, m\in{\cal M}$},  
%\begin{equation}
%\small
%F_m(\bbtheta):=\frac{1}{|{\cal N}_m|}\sum_{n\in{\cal N}_m}\!f(\bbtheta;\xi_n)~~~{\rm or}~~~F_m(\bbtheta):=\mathbb{E}_{\xi_m}\left[f(\bbtheta;\xi_m)\right],\quad m\in{\cal M}
%\end{equation}
%The individual loss $f(\theta;\xi_n)$ is with respect to
where {\small$\bbtheta\in \mathbb{R}^d$} is the parameter of a prediction model (e.g., weights in a neural network), and {\small$f(\bbtheta;\xi_n)$} is the individual loss with respect to each datum. 
%For example, in deep learning, $f(\bbtheta;\xi_n)$ is the loss function of the output generated by a neural network, and $\bbtheta$ concatenates the weights of all layers. 
%In FL, the common goal is to find a common model $\bbtheta$ that solves $\min~F(\bbtheta):=\sum_{m\in{\cal M}}F_m\left(\bbtheta\right)$. However, when the datasets are heterogeneous, and thus the distribution of $\xi_m$ is different among devices, the sought common model $\bbtheta$ may have degraded performance, sometimes even worse than the model learnt solely from the local dataset. 
 In MAML, the goal is to find a common initialization that can adapt to a desired model for a set of new tasks after taking several gradient descent steps.
Specifically, we find such initialization by solving the following \emph{one-step} MAML problem
\begin{align}\label{opt1}
	\min_{\bbtheta\in \mathbb{R}^d}~F(\bbtheta):=\frac{1}{M}\sum_{m=1}^MF_m\left(\bbtheta-\alpha \nabla F_m(\bbtheta)\right)\\
	\qquad{\rm with}~~~F_m(\bbtheta):=\mathbb{E}_{\xi_m}\left[f(\bbtheta;\xi_m)\right]\nonumber
\end{align}
where $\alpha$ is the stepsize, and {\small$\nabla F_m$} is the gradient of the loss function at task $m$. The problem \eqref{opt1} is called the one-step adaptation since the loss of each task is evaluated at the model {\small$\bbtheta-\alpha \nabla F_m(\bbtheta)$} that is updated by taking one gradient descent of the each task's loss function. It is not hard to verify that \eqref{opt1} can be formulated as the special case of \eqref{opt0} with {\small$N=2$}.

%Despite its generality and importance, 
The compositional structure in \eqref{opt0} has not been fully explored to develop efficient algorithms. %Compared with the advances %the major efforts that have been taken for its
%of solution techniques for non-compositional counterpart, % during the last decade. 
In comparison,
averaging, acceleration, and variance reduction are maturing for non-compositional stochastic optimization. \emph{Can we develop a simple yet efficient counterpart of SGD for stochastic compositional optimization?} By \emph{simplicity}, we mean the new algorithm has an update \emph{without} the techniques such as double loop, accuracy-dependent stepsizes, and increasing batch sizes that are engineered to mitigate the challenges due to the compositional structure. 
%and can be easily augmented with existing techniques for improving SGD. 
By \emph{efficiency}, we mean the new algorithm can achieve the same convergence rate or the sample complexity as SGD for stochastic non-compositional problems. This paper provides an affirmative answer for this question.

\subsection{Prior art}

To put our work in context, we review prior contributions that we group in the following categories.

%\noindent\textbf{Federated learning.}
%Since the seminal work \cite{konevcny2016federated,mcmahan2017}, federated learning (FL) has been studied in diverse settings. 
%Communication efficiency has been an important issue in FL \cite{kairouz2019fl}. 
%Popular methods generally aim to: c1) reduce the number of bits per communication
%round, including \citep{alistarh2017qsgd,aji2017sparse,stich2018nips}, to list a few; and, c2) save the number of communication rounds \cite{chen2018lag,wang2018coop,yu2019aaai,lin2018don,stich2019local}. 
%Local SGD \cite{yu2019aaai,lin2018don,stich2019local} is pertinent to the LoSCG introduced in this paper, but it cannot be applied to solve the stochastic compositional optimization in this paper. 
%To account for the personalization, multi-task FL has been studied in \cite{smith2017} that preserves the specialty of each device while also leveraging the similarity among devices. 
%Agnostic FL has also been proposed in \cite{mohri2019icml}, where the federated model is optimized for any target distribution formed by a mixture of the device distributions.
%MAML has been incorporated into FL recently in \cite{fallah2020feb}, but the algorithm therein requires an increasing batch size to guarantee convergence.
%blanchet2017unbiased
\begin{table*}[t]
\footnotesize
%	\vspace{-0.5cm}
	\begin{center}
		\begin{tabular}{c||c|c|c|c|c|c ||c}
			\hline
{\bf	Accuracy metric	}	&\multicolumn{6}{c ||}{\bf Single-loop SGD methods}      & \bf \makecell{Double-loop variance \\ reduction methods}\\
			\hline
\!\!$\frac{1}{K}\sum_{k=0}^{K-1}\EE[\|\nabla F(\bbtheta^k)\|^2]$\!\!	& SCSC     &   Adam SCSC  & \cite{wang2017mp} & \cite{wang2017jmlr} & \cite{tutunov2020comp-adam}  & \cite{ghadimi2020jopt}& \cite{hu2019efficient,zhang2019icml,zhang2019nips} \\ \hline
			\textbf{sample comlpx} & $\epsilon^{-2}$ & $\epsilon^{-2}$ & $\epsilon^{-4}$ & $\epsilon^{-2.25}$ & $\epsilon^{-2.25}$ & $\epsilon^{-2}$ &  $\epsilon^{-1.5}$   \\ \hline
			\textbf{increasing batch size} & No   & No   & No  & No  & Yes  & No   & Yes   \\ \hline
			%\textbf{adaptive $\alpha$}            & $2.59$    & $1.96$    & $1.94$    & $5.99$   & $2.67$    & $1.61$    & $1.94$    & $2.13$   \\ \hline
			\textbf{single loop} & Yes  & Yes   & Yes   & Yes & Yes    & Yes  & No  \\ \hline
		\end{tabular}
	\end{center}
   \caption{\small Sample complexity of related algorithms that achieve the $\epsilon$-stationary point of \eqref{opt0} with $N=2$.}			\label{table:comp1}
%   	\vspace{-0.6cm}
\end{table*}

\noindent\textbf{Stochastic compositional optimization.}
%As the generalization of stochastic optimization, stochastic compositional optimization is gaining popularity. 
Non-asymptotic analysis of stochastic compositional optimization is pioneered by \cite{wang2017mp}, where a new approach called SCGD uses two sequences of stepsizes in different time scales: a slower one for updating variable $\bbtheta$, and a faster one for tracking the value of inner function(s). 
An accelerated variant of SCGD with improved convergence rate has been developed in \cite{wang2017jmlr}. 
In concurrent with our work, an adaptive and accelerated SCGD has been studied in \cite{tutunov2020comp-adam}, but the updates of \cite{wang2017jmlr,tutunov2020comp-adam} are different from ours, and thus their convergence rates are slower than ours and that of SGD for the non-compositional case.
\blue{While most of existing algorithms for stochastic compositional problems use two-timescale stepsizes, a single timescale approach was developed for the two-level compositional problems in \cite{ghadimi2020jopt}, which has been recently extended to multi-level compositions in \cite{ruszczynski2020stochastic}.} 
%However, the idea used in \cite{ghadimi2020jopt} is very different from ours. 
No convergence rate regarding the gradient norm is given in \cite{ruszczynski2020stochastic}. % neither in terms of the gradient norm nor the function values. 
%The algorithm therein requires keeping three sequences of variables and averaging both the inner and outer variables, and more importantly, it does not explore the multi-level and adaptive schemes.  
%Similar problems have also been studied under the general conditional stochastic optimization framework \cite{hu2020}, but the convergence rate and the sample complexity therein are suboptimal to the problem \eqref{opt0}. 

Starting from \cite{lian2017aistats}, researchers have given much attention the stochastic compositional problem \eqref{opt0} with the \emph{finite-sum structure}. 
Building upon variance-reduction techniques for non-compositional problems \cite{johnson2013,defazio2014,nguyen2017sarah,fang2018spider,zhou2018stochastic},  
variance-reduced SCGD methods have been developed in this setting under the convex  \cite{lian2017aistats,blanchet2017unbiased,devraj2019scgd,lin2018improved}, and nonconvex assumptions \cite{hu2019efficient}. 
Recent advances also include stochastic compositional optimization with a nonsmooth regularizer \cite{huo2018accelerated,zhang2019icml,zhang2019nips}. Other variants using ADMM and accelerated variance reduction methods for finite-sum compositional problems have been studied in \cite{yu2017ijcai,xu2019katyusha}. These variance reduction-based methods have impressive performance in the finite-sum compositional problems. 
While they can be applied to the stochastic compositional problems \eqref{opt0}, 
%their empirical performance will be degraded in solving the general problem \eqref{opt0}, and 
they require an \emph{increasing batch size} and run in a double-loop manner, which is not preferable in practice. 
See a comparison in sample complexity in Table \ref{table:comp1}. 
%Stochastic compositional optimization has been also studied in the compositional form of multiple functions \cite{yang2019jopt}. 

% 
\noindent\textbf{Optimization for model-agnostic meta learning.}
On the other end of the spectrum, 
MAML is a popular framework that learns a good initialization from past experiences for fast adaptation to new tasks \cite{finn2017icml,finn2019online}. MAML has been applied to various domains including reinforcement learning \cite{liu2019icml}, recommender systems, and communication \cite{simeone2020maml}. 
%Alternatively to meta-initialization algorithms such as MAML, meta-regularization approaches aim to learn a good bias for a regularized empirical risk minimization problem for intra-task learning \cite{ravi2017iclr,zhou2019efficient}. 
Due to the specific formulation, solving MAML requires information on the stochastic Hessian matrix, which can be costly in practice. 
Some recent efforts have been made towards developing Hessian-free methods for MAML; see also e.g., \cite{nichol2018first,fallah2019maml,khodak2019icml,song2020iclr,fallah2020maml,ravi2017iclr}. While most of existing works aim to find the initialization for the one-step gradient adaptation, the general multi-step MAML has also been recently studied in \cite{ji2020feb} with improved empirical performance. 
However, these methods do not fully embrace the compositional structure of MAML, and thus either lead to suboptimal sample complexity or only obtain inexact convergence for \eqref{opt1}. While this paper does not deal with Hessian-free update, our algorithms can friendly incorporate these advanced techniques motivated by application-specific challenges as well.

\subsection{Our contributions}

In this context, the present paper puts forward a new stochastic compositional gradient framework that introduces a stochastic correction to the original stochastic compositional gradient method \cite{wang2017mp}, which justifies its name \textbf{S}tochastically \textbf{C}orrected \textbf{S}tochastic \textbf{C}ompositional gradient (\textbf{SCSC}). Compared to the existing stochastic optimization schemes, our contributions can be summarized as follows. 

\textbf{c1)} We develop a stochastic gradient method termed SCSC for stochastic compositional optimization by using stochastically corrected compositional gradients. SCSC is simple to use as its alternatives, yet it achieves the same order of convergence rate {\small${\cal O}(k^{-\frac{1}{2}})$} as SGD for non-compositional problems;

\textbf{c2)} We generalize our SCSC algorithm to solve the multi-level stochastic compositional problems, and develop its adaptive gradient schemes based on the Adam-type update, both of which achieve the same order of convergence rate as their counterparts for non-compositional problems; and,

\textbf{c3)} We empirically verify the effectiveness of our SCSC-based algorithms in the portfolio management and MAML tasks using standard datasets. Comparing with the existing algorithms, our new algorithms converge faster and require a fixed (rather than increasing) batch size. 
%\noindent\textbf{Notation}. Bold lowercase letters denote
%column vectors, which are transposed by $(\cdot)$. And $\|\mathbf{x}\|$ denotes the $f_2$-norm of $\mathbf{x}$. Inequalities for vectors $\mathbf{x} > \mathbf{0}$ is defined in the entry-wise manner.
%\begin{wrapfigure}{R}{0.45\textwidth}
%\vspace{-0.8cm}
% \begin{minipage}{0.45\textwidth}
% \small

% \end{minipage}
% \vspace{-0.3cm}
% \end{wrapfigure}

\section{A New Method for Stochastic Compositional Optimization}
\label{sec.scg}
\subsection{Warm up: Two-level compositional problems}

For the notational brevity, we first consider a special case of \eqref{opt0} - the two-level compositional problem
\begin{align}\label{opt0-2}
	\min_{\bbtheta\in \mathbb{R}^d}~f(g(\bbtheta))=\mathbb{E}_{\xi}\left[f\left(\mathbb{E}_{\phi}[g(\bbtheta;\phi)];\xi\right)\right]
\end{align}
where $\xi$ and $\phi$ are two random variables. 
Connecting the notations of \eqref{opt0-2} with those in \eqref{opt0}, they are {\small$f_2(\,\cdot\,;\xi_2):=f(\,\cdot\,;\xi)$} and {\small$f_1(\bbtheta;\xi_1):=g(\bbtheta;\phi)$}.

%\noindent\textbf{Challenges of solving compositional problems.}

%\noindent\textbf{Challenges of solving compositional problems.}
Before introducing our approach, we first highlight the inherent challenge of applying the standard SGD method to \eqref{opt0}. 
%The gradient descent update for \eqref{opt0-2} can be written as 
% \begin{align}\label{eq.challe}
% 	\bbtheta^{k+1}=\bbtheta^k-\alpha_k \sum_{m\in{\cal M}}\left(\mathbf{I}-\nabla^2F_m(\bbtheta^k)\right)\nabla F_m\left(\bbtheta^k-\alpha \nabla F_m(\bbtheta^k)\right). 
% \end{align}	
%\begin{align}\label{eq.challe}
%	\bbtheta^{k+1}=\bbtheta^k-\alpha \nabla \mathbb{E}_{\phi}[g(\bbtheta^k;\phi)] \nabla \mathbb{E}_{\xi}\left[f(\mathbb{E}_{\phi}[g(\bbtheta^k;\phi)];\xi)\right]. 
%\end{align}	
When the distributions of $\phi$ and $\xi$ are unknown, the stochastic approximation \cite{robbins1951} leads to the following stochastic update 
\begin{equation}\label{eq.challe}
	\bbtheta^{k+1}=\bbtheta^k-\alpha \nabla g(\bbtheta^k;\phi^k) \nabla f(\mathbb{E}_{\phi}[g(\bbtheta^k;\phi)];\xi^k) 
\end{equation}	
where $\phi^k$ and $\xi^k$ are samples drawn at iteration $k$. Notice that obtaining the unbiased stochastic gradient {\small$\nabla g(\bbtheta^k;\phi^k) \nabla f(\mathbb{E}_{\phi}[g(\bbtheta^k;\phi)];\xi^k)$} is still costly since the gradient $\nabla f$ is evaluated at {\small$\mathbb{E}_{\phi}[g(\bbtheta^k;\phi)]$}. Except that the gradient $\nabla f$ is linear, the expectation in \eqref{eq.challe} cannot be omitted, because the stochastic gradient {\small$\nabla g(\bbtheta^k;\phi^k) \nabla f(g(\bbtheta^k;\phi^k);\xi^k)$} is biased, i.e.,
\begin{align}\label{eq.challe-2}
\mathbb{E}_{\phi^k, \xi^k}[\nabla g(\bbtheta^k;\phi^k) \nabla f(g(\bbtheta^k;\phi^k);\xi^k)] \neq	\mathbb{E}_{\phi,\xi}\left[\nabla g(\bbtheta^k;\phi) \nabla f(\mathbb{E}_{\phi}[g(\bbtheta^k;\phi)];\xi)\right].
\end{align} 
Therefore, the machinery of stochastic gradient descent cannot be directly applied here. 

\begin{algorithm}[t]
\caption{SCSC for two-level problem}\label{alg:scg}
%%%%%%%%%%%%%%%%%%%%%%%%%%%%%%%%%%%%%%%%
    \begin{algorithmic}[1]
    \State{\textbf{initialize:}~$\bbtheta^0$, $\bby^0$, stepsizes $\alpha_0, \beta_0$}
    \For{$k= 1, 2,\ldots, K$}
           \State{randomly select datum $\phi^k$} 
           \State{compute $g(\bbtheta^k;\phi^k)$ and $\nabla g(\bbtheta^k;\phi^k)$}
            \State{update variable $\bby^{k+1}$ via \eqref{eq.SCSC-2} or \eqref{eq.SCSC-3}}
            \State{randomly select datum $\xi^k$} 
            \State{compute $\nabla f(\bby^{k+1};\xi^k)$} 
            \State{update variable $\bbtheta^{k+1}$ via \eqref{eq.SCSC-1}}
            % \State{(Alternatively) Run Lines 4, 5 now, \emph{not} next time}
    \EndFor
    \end{algorithmic}
\end{algorithm}

To overcome this difficulty, a popular SCGD has been developed in \cite{wang2017mp} for solving the two-level stochastic compositional problem \eqref{opt0-2}, which is given by 
\begin{subequations}\label{eq.wSCGD}
\begin{align}
	\bby^{k+1}&=(1-\beta_k)\bby^k+\beta_k g(\bbtheta^k;\phi^k)\label{eq.wSCGD-1}\\
	\bbtheta^{k+1}&=\bbtheta^k-\alpha_k \nabla g(\bbtheta^k;\phi^k) \nabla f(\bby^{k+1};\xi^k)\label{eq.wSCGD-2}
\end{align}
\end{subequations}
where $\alpha_k$ and $\beta_k$ are two sequences of decreasing stepsizes. The above recursion involves two iterates, $\bby^k$ and $\bbtheta^k$, whose updates are coupled with each other. To ensure convergence, SCGD requires $\bby^k$ to be updated in a timescale asymptotically faster than that of $\bbtheta^k$ so that $\bbtheta^k$ is relatively static with respect to $\bby^k$; i.e., {\small$\lim\limits_{k\rightarrow \infty}\alpha_k/\beta_k=0$}. This prevents SCGD from choosing the same stepsize as SGD for the non-compositional stochastic problems, and also results in its \emph{suboptimal convergence rate}. 
In \eqref{eq.wSCGD-1}, the iterate $\bby^{k+1}$ linearly combines $\bby^k$ and {\small$g(\bbtheta^k;\phi^k)$}, where $\bby^k$ is updated by the outdated iterate {\small$\bbtheta^{k-1}$}. We notice that this is the main reason of using a smaller stepsize $\alpha_k$ in the proof of \cite{wang2017mp}. 

With more insights given in Section \ref{subsec.ode}, our new method that we term stochastically corrected stochastic compositional gradient (\textbf{SCSC}) addresses this issue by linearly combining a ``corrected'' version of $\bby^k$ and $g(\bbtheta^k;\phi^k)$. 
\blue{Since we use {\small$\nabla f(\bby^{k+1};\xi^k)$} to approximate {\small$\nabla f(g(\bbtheta^k);\xi^k)$}, we want {\small$\bby^{k+1}$} as close to {\small$g(\bbtheta^k)$} as possible.
Roughly speaking, if {\small$\bby^k\approx g(\bbtheta^{k-1})$}, we gauge that {\small$g(\bbtheta^k)\approx g(\bbtheta^{k-1})+\nabla g(\bbtheta^k;\phi^k)(\bbtheta^k-\bbtheta^{k-1})$}.} 
Therefore, we propose the following new update 
\begin{subequations}\label{eq.SCSC}
\begin{align}
	\bbtheta^{k+1}&=\bbtheta^k-\alpha_k \nabla g(\bbtheta^k;\phi^k) \nabla f(\bby^{k+1};\xi^k) \label{eq.SCSC-1}\\
		\bby^{k+1}&=(1-\beta_k)\left(\bby^k+\nabla g(\bbtheta^k;\phi^k)(\bbtheta^k-\bbtheta^{k-1})\right)+\beta_k g(\bbtheta^k;\phi^k).\label{eq.SCSC-2}
\end{align}

We can also approximate {\small$\nabla g(\bbtheta^k;\phi^k)(\bbtheta^k-\bbtheta^{k-1})$} by the first-order Taylor expansion, that is
\begin{align}\label{eq.SCSC-3}
	\bby^{k+1}&=(1-\beta_k)\left(\bby^k+g(\bbtheta^k;\phi^k)-g(\bbtheta^{k-1};\phi^k)\right)+\beta_k g(\bbtheta^k;\phi^k).
\end{align}
\end{subequations} 

Different from \eqref{eq.wSCGD}, we use two sequences of stepsizes $\alpha_k$ and $\beta_k$ in \eqref{eq.SCSC} that decrease at the same rate as SGD. 
As we will show later, under a slightly different assumption, both \eqref{eq.SCSC-2} and \eqref{eq.SCSC-3} can guarantee that the new approach achieves the same convergence rate ${\cal O}(k^{-\frac{1}{2}})$ as SGD for the non-compositional stochastic optimization problems. 

  \vspace{0.1cm}
\noindent\textbf{Choices of \eqref{eq.SCSC-2} and \eqref{eq.SCSC-3}.}
The two choices of \eqref{eq.SCSC-2} and \eqref{eq.SCSC-3} for updating $\bby^k$ have different advantages.
At each iteration, the main cost of \eqref{eq.SCSC-2} is  one function evaluation, $g(\bbtheta^k;\phi^k)$. Although it needs $\nabla g(\bbtheta^k;\phi^k)$, it is already computed in the update of $\bbtheta^k$. %,  -- the same number of function evaluations as SCGD; and
In comparison, the main cost of \eqref{eq.SCSC-3} is two function evaluations, $g(\bbtheta^k;\phi^k)$ and $g(\bbtheta^{k-1};\phi^k)$. Therefore,  \eqref{eq.SCSC-3} has a higher cost. However, in some applications such as MAML with neural network parameterization, \eqref{eq.SCSC-3} is a better choice since one often avoids computing $\nabla g(\bbtheta^k;\phi^k)$, which is the Hessian matrix in MAML, but instead computes the matrix-vector (Hessian-gradient) product $\nabla g(\bbtheta^k;\phi^k) \nabla f(y^{k+1};\xi^k)$ in \eqref{eq.SCSC-1} and can evaluate $g(\bbtheta^k;\phi^k)$ and $g(\bbtheta^{k-1};\phi^k)$ at a relatively low cost.  
%In terms of , we find the two versions generate almost the same empirical results.

\subsection{Algorithm development motivated by ODE analysis.}\label{subsec.ode} 

We provide some intuition of our design via an ODE-based construction for the corresponding deterministic continuous-time system. To achieve so, we make the following assumptions \cite{wang2017mp,lian2017aistats,zhang2019nips}. 

\noindent\textbf{Assumption 1.}
\emph{Functions $f$ and $g$ are $L_f$- and $L_g$-smooth, that is, for any $\bbtheta, \bbtheta'\in\mathbb{R}^d$, we have} {\small$\|\nabla f(\bbtheta;\xi)-\nabla f(\bbtheta';\xi)\|\leq L_f\|\bbtheta-\bbtheta'\|,~\|\nabla g(\bbtheta;\phi)-\nabla g(\bbtheta';\phi)\|\leq L_g\|\bbtheta-\bbtheta'\|$}.
%\begin{equation}\label{eq.ass1}
%\|\nabla f(\bbtheta)-\nabla f(\bbtheta')\|\leq L_f\|\bbtheta-\bbtheta'\|,~~~\|\nabla g(\bbtheta)-\nabla g(\bbtheta')\|\leq L_g\|\bbtheta-\bbtheta'\|.
%\end{equation}

\noindent\textbf{Assumption 2.}
\emph{The stochastic gradients of $f$ and $g$ are bounded in expectation, that is {\small$\mathbb{E}\left[\|\nabla g(\bbtheta;\phi)\|^2\right]\leq C_g^2$ and $\mathbb{E}\left[\|\nabla f(\bby;\xi)\|^2\right]\leq C_f^2$}.}
%\begin{equation}\label{eq.ass2}
%\mathbb{E}\left[g(\bbtheta;\phi^k)\mid {\cal F}^k\right]=g(\bbtheta),~~~\mathbb{E}\left[\nabla g(\bbtheta;\phi^k) \nabla f(g(\bbtheta);\xi^k)\mid {\cal F}^k\right]=\nabla g(\bbtheta) \nabla f(g(\bbtheta)). 
%\end{equation}

Assumptions 1 and 2 require both the function values and the gradients to be Lipschitz continuous. 
As a result, the compositional function {\small$F(\bbtheta)$} is also smooth with \cite{zhang2019nips} 
% {\small$L:=C_g^2 L_f+C_f L_g$}.
\begin{equation}\label{eq.smooth_const}
    L:=C_g^2 L_f+C_f L_g.
\end{equation}

Let $t$ be time in this subsection. Consider the following ODE
\begin{equation}\label{eq.ode-theta}
    \dot{\bbtheta}(t)=-\alpha \nabla g(\bbtheta(t))\nabla f(\bby(t))
\end{equation}
where the constant $\alpha>0$. If we set {\small$\bby(t)=g(\bbtheta(t))$}, then this system describes a gradient flow that monotonically decreases {\small$f\left(g(\bbtheta(t))\right)$}. In this case, we have {\small$\frac{d}{dt}f\left(g(\bbtheta(t))\right)=\langle \nabla g(\bbtheta(t))\nabla f(g(\bbtheta(t))), \dot{\bbtheta}(t)\rangle=-\frac{1}{\alpha}\|\dot{\bbtheta}(t)\|^2$}. 
However, if we can evaluate gradient $\nabla f$ only at {\small$\bby(t)\neq g(\bbtheta(t))$}, it introduces inexactness and thus {\small$f\left(g(\bbtheta(t))\right)$} may lose monotonicity, namely
\begin{align}\label{eq.ode-fg}
 	\frac{d}{dt}f\left(g(\bbtheta(t))\right) 
%	&=\langle \nabla g(\bbtheta(t))\nabla f(g(\bbtheta(t))), \dot{\bbtheta}(t)\rangle\nonumber\\
	&\stackrel{(a)}{=}-\frac{1}{\alpha}\|\dot{\bbtheta}(t)\|^2+\langle\nabla g(\bbtheta(t))\big(\nabla f(g(\bbtheta(t)))-\nabla f(\bby(t))\big), \dot{\bbtheta}(t)\rangle\nonumber\\
	&\stackrel{(b)}{\leq}-\frac{1}{\alpha}\|\dot{\bbtheta}(t)\|^2+\| \nabla g(\bbtheta(t))\|\|\nabla f(g(\bbtheta(t)))-\nabla f(\bby(t))\| \|\dot{\bbtheta}(t)\|\nonumber\\
	&\stackrel{(c)}{\leq}-\frac{1}{2\alpha}\|\dot{\bbtheta}(t)\|^2+\frac{\alpha C_g^2L_f^2}{2}\|g(\bbtheta(t))-\bby(t)\|^2
	\end{align} 
\hspace{-3pt}where (a) follows from \eqref{eq.ode-theta}, (b) uses the Cauchy-Schwarz inequality, (c) is due to Assumptions 1 and 2 as well as the Young's inequality. In general, the RHS of \eqref{eq.ode-fg} is not necessarily negative. 
Therefore, it motivates an energy function with both {\small$f(g(\bbtheta(t)))$} and {\small$\|g(\bbtheta(t))-\bby(t)\|^2$}, given by
\begin{equation}\label{eq.ct-lyap}
	{\cal V}(t):=f(g(\bbtheta(t)))+\|g(\bbtheta(t))-\bby(t)\|^2.
\end{equation}
We wish ${\cal V}(t)$ would monotonically decrease. By substituting the bound in \eqref{eq.ode-fg}, we have
\begin{align}\label{eq.ode-lyap}
	\dot{\cal V}(t)
%	&=\frac{d}{dt}f\left(g(\bbtheta(t))\right)+\frac{d}{dt}\|g(\bbtheta(t))-\bby(t)\|^2\nonumber\\
	&\leq-\frac{1}{2\alpha}\|\dot{\bbtheta}(t)\|^2+\frac{\alpha C_g^2L_f^2}{2}\|g(\bbtheta(t))-\bby(t)\|^2
 +2\left\langle \bby(t)-g(\bbtheta(t)), \dot{\bby}(t)-\nabla g(\bbtheta(t)) \dot{\bbtheta}(t)\right\rangle\nonumber\\
	&=-\frac{1}{2\alpha}\|\dot{\bbtheta}(t)\|^2-\Big(2\beta-\frac{\alpha C_g^2L_f^2}{2}\Big)\|g(\bbtheta(t))-\bby(t)\|^2\nonumber\\
&\quad~+ 2\left\langle \bby(t)-g(\bbtheta(t)), \dot{\bby}(t)\!+\!\beta(\bby(t)\!-\!g(\bbtheta(t)))\!-\!\nabla g(\bbtheta(t)) \dot{\bbtheta}(t)\right\rangle
%	&\stackrel{(b)}{\leq}-\frac{1}{2\alpha}\|\dot{\bbtheta}(t)\|^2+\frac{\alpha C_g^2L_f^2}{2}\|g(\bbtheta(t))-\bby(t)\|^2
	\end{align}
\hspace{-3pt}where $\beta>0$ is a fixed constant. The first two terms in the RHS of \eqref{eq.ode-lyap} are non-positive given that {\small$\alpha\geq 0$} and {\small$\beta\geq {\alpha C_g^2L_f^2}/{4}$}, but the last term can be either positive or negative. Following the \textbf{maximum descent principle} of ${\cal V}(t)$, we are motivated to use the following dynamics
\begin{equation}\label{eq.ode-y}
	\dot{\bby}(t)=-\beta\left(\bby(t)-g(\bbtheta(t))\right)+\nabla g(\bbtheta(t)) \dot{\bbtheta}(t)~\Longrightarrow~\dot{\cal V}(t)\leq 0.
\end{equation}
%which leads to $\dot{\cal V}(t)\leq -\frac{1}{2\alpha}\|\dot{\bbtheta}(t)\|^2-\left(\beta-\frac{\alpha C_g^2L_f^2}{2}\right)\|g(\bbtheta(t))-\bby(t)\|^2\leq 0$. 
Directly implementing \eqref{eq.ode-y} in the discrete time is intractable. Instead, we approximate the continuous-time update by either the backward difference or the Taylor expansion, given by
\begin{align}\label{eq.appode-y}
\nabla g(\bbtheta(t)) \dot{\bbtheta}(t)\approx \gamma_k \nabla g(\bbtheta^k) \Big(\bbtheta^k-\bbtheta^{k-1}\Big) ~~~~~{\rm or}~~~\approx\gamma_k \Big(g(\bbtheta^k)-g(\bbtheta^{k-1})\Big)
\end{align}
where $k$ is the discrete iteration index, and $\gamma_k>0$ is the weight controlling the approximation. 

With the insights gained from \eqref{eq.ode-theta} and \eqref{eq.ode-y}, 
our stochastic update \eqref{eq.SCSC} essentially discretizes time $t$ into iteration $k$, and replaces the exact function $g(\bbtheta(t))$ and the gradients {\small$\nabla g(\bbtheta(t)), \nabla f(\bby(t))$} by their stochastic values. The choice {\small$\gamma_k:=1-\beta_k$} in \eqref{eq.appode-y} will simplify some constants in the proof.

\noindent\textbf{Connection to existing methods.} 
Using this interpretation, the dynamics of $\bby(t)$ in SCGD \cite{wang2017mp} is 
%\begin{subequations}\label{eq.ode-SCGD}
%	\begin{align}
%	\dot{\bbtheta}(t)&=-\alpha \nabla g(\bbtheta(t))\nabla f(\bby(t))\\
%	\dot{\bby}(t)&=-\beta\left(\bby(t)-g(\bbtheta(t))\right)
%	\end{align}
%\end{subequations}
%\begin{subequations}
	\begin{equation}\label{eq.ode-SCGD}
%	\dot{\bbtheta}(t)&=-\alpha \nabla g(\bbtheta(t))\nabla f(\bby(t))\\
	\dot{\bby}(t)=-\beta\left(\bby(t)-g(\bbtheta(t))\right)
	\end{equation}
which will leave an additional non-negative term {\small$\langle \bby(t)-g(\bbtheta(t)), -\nabla g(\bbtheta(t)) \dot{\bbtheta}(t)\rangle\leq C_g\|\bby(t)-g(\bbtheta(t))\|\|\dot{\bbtheta}(t)\|$} in \eqref{eq.ode-lyap}. To ensure the convergence of ${\cal V}(t)$, a much smaller stepsize $\alpha$ is needed.  

%The recent \emph{variance-reduced} compositional approaches \cite{lian2017aistats,zhang2019icml,zhang2019nips} can be summarized by 
%\begin{subequations}\label{eq.vr-SCGD}
%\small
%	\begin{align}
%\!\!\!\!\bby^{k+1}&=\bby^k+g(\bbtheta^k;\phi^k)-g(\bbtheta^{k-1};\phi^k)\label{eq.vr-SCGD-1}\\
%\!\!\!\!\bbtheta^{k+1}&=\bbtheta^k-\alpha_k \nabla g(\bbtheta^k;\phi^k) \nabla f(\bby^{k+1};\xi^k).\label{eq.vr-SCGD-2}
%\end{align}
%\end{subequations}
Using the ODE interpretation, the dynamics of $\bby(t)$ in the recent \emph{variance-reduced} compositional gradient approaches, e.g.,  \cite{lian2017aistats,hu2019efficient,zhang2019icml,zhang2019nips} can be written as
%\begin{subequations}\label{eq.ode-vrSCGD}
%	\begin{align}
%	\dot{\bbtheta}(t)&=-\alpha \nabla g(\bbtheta(t))\nabla f(\bby(t))\\
%	\dot{\bby}(t)&=\nabla g(\bbtheta(t)) \dot{\bbtheta}(t)
%	\end{align}
%\end{subequations}
\begin{equation}\label{eq.ode-vrSCGD}
%	\dot{\bbtheta}(t)&=-\alpha \nabla g(\bbtheta(t))\nabla f(\bby(t))\\
	\dot{\bby}(t)=\nabla g(\bbtheta(t)) \dot{\bbtheta}(t)
\end{equation}
which leaves the non-negative term {\small$\|g(\bbtheta(t))-\bby(t)\|^2$} uncancelled in \eqref{eq.ode-lyap}. Therefore, to ensure convergence of ${\cal V}(t)$, the variance-reduced compositional approaches must calculate the \emph{full gradient} {\small$\nabla f(g(\bbtheta(t)))$} periodically to erase the error accumulated by {\small$\|g(\bbtheta(t))-\bby(t)\|^2$}.

 \vspace{0.1cm}
\noindent\textbf{Comparison with \cite{ghadimi2020jopt}.}
The recent work \cite{ghadimi2020jopt} introduces the first algorithm NASA that achieves the same rate of SGD; so does SCSC. Both NASA in \cite{ghadimi2020jopt} and SCSC of this paper are single-time scale algorithms. % with a similar Lyapunov function. 
% We have explicitly acknowledged that \cite{ghadimi2020jopt} is the first one that achieves the $O(1/\sqrt{K})$ rate of convergence for the stochastic compositional problems in Section I.A. 
% However, we do think the novelty of this work is significant and the work merits the publication. 
There are, however, differences and advantages of SCSC over NASA: 

{\bf D1)} SCSC is \emph{simpler} and appears to be \emph{easier to generalize}. The $\bbtheta$-update of SCSC closely resembles SGD; that of NASA is more complicated. Specifically, NASA adds an extra sequence to the $\bbtheta$-update \eqref{eq.wSCGD-2} to reduce the variance of $\mathbb{E}[\nabla g(\bbtheta^k;\phi^k) \nabla f(\bby^{k+1};\xi^k)]$. SCSC achieves the same rate just generating a better sequence $\bby^{k+1}$, maintaining the SGD-like update. This allows us to apply SGD techniques such as Adam in a \emph{plug-and-play} manner to SCSC and inherit the benefits. It is unclear whether we can do the same to NASA.
We believe that one can apply variance reduction techniques to the $\bbtheta$-update to further improve SCSC.
% (unfortunately, the page limit prevents us from pursuing it in this paper). 
%Specifically, NASA aims to reduce the variance of the compositional gradient $\mathbb{E}[\nabla g(\bbtheta^k;\phi^k) \nabla f(\bby^{k+1};\xi^k)]$ by introducing an additional sequence to track the compositional gradient. In contrast, SCSC aims to better estimate $\mathbb{E}[g(\bbtheta^k;\phi)]$ by developing a new update of $\bby^k$ without introducing a new sequence, which in turn reduces the bias of the compositional gradient. The two algorithms are motivated by two different design principles;  
%\red{Add comparison.}  
% that tracks only two sequences instead of three in NASA; 

{\bf D2)} SCSC is accompanied by a new \emph{ODE analysis} that explains in a couple of equations our design intuition and
% our algorithm development, which may stimulate future development of stochastic compositional optimization; 
% {\bf D3)} 
the roles of SCSC's key iterates. The continuous-time analog appeals to the applied math and control communities and may encourage them to make further generalizations. 
%, which applied to the generalizations of Nesterov's fast gradient method.
%--- a more accurate compositional gradient estimator, which can be used in existing non-compositional stochastic optimization algorithm in a \emph{plug-and-play} manner. We show this by incorporating it into the Adam update and recover the convergence rate of Adam in the non-compositional stochastic optimization. In addition, using advanced variance reduction techniques in the $x$-update will further improve the sample complexity of SCSC; and, 

{\bf D3)} We numerically compared SCSC with NASA. The results indicate that SCSC is empirically more stable and also more robust to the choice of stepsizes.

%\newpage
\section{Adam-type and Multi-level Variants}
In this section, we introduce two variants of our new stochastic compositional gradient method:  adaptive stochastic gradient and multi-level compositional gradient schemes.

\subsection{Adam-type adaptive gradient approach}

When the sought parameter $\bbtheta$ represents the weight of a neural network, in the non-compositional stochastic problems, finding a good parameter $\bbtheta$ will be much more efficient if adaptive SGD approaches are used such as AdaGrad \cite{duchi2011adaptive} and Adam \cite{kingma2014adam}. 
We first show that our SCSC method can readily incorporate Adam update for $\bbtheta$, and establish that it achieves the same convergence rate as the original Adam approach for the non-compositional stochastic problems \cite{reddi2019adam,chen2019adam}.

Following the Adam and AMSGrad in \cite{kingma2014adam,reddi2019adam,chen2019adam}, the Adam SCSC approach uses two sequences $\bbh^k$ and $\bbv^k$ to track the exponentially weighted gradient of $\bbtheta^k$ and its second moment estimates, and uses $\bbv^k$ to inversely weight the gradient estimate $\bbh^k$. 
The update can be written as  
%For $i=1,\cdots, d$, we have 
%\begin{subequations}\label{eq.adaSCGD}
%	\begin{align}
%	h^{k+1}_i&=\eta_1 h^k_i+(1-\eta_1)\nabla^k_i \label{eq.adaSCGD-1}\\
%	v^{k+1}_i&=\eta_2 \hat v^k_i+(1-\eta_2) (\nabla^k_i)^2\label{eq.adaSCGD-2}\\
%		\theta^{k+1}_i&=\theta^k_i-\alpha_k \frac{h^{k+1}_i}{\sqrt{\bm{\epsilon}+\hat v^{k+1}_i}} \label{eq.adaSCGD-3}\\
% \bby^{k+1}&~{\rm via}~\eqref{eq.SCSC-2}~{\rm or}~\eqref{eq.SCSC-3}\notag
%\end{align}
%\end{subequations} 
\begin{subequations}\label{eq.adaSCGD}
	\begin{align}
	\bbh^{k+1}&=\eta_1 \bbh^k+(1-\eta_1)\bm\nabla^k \label{eq.adaSCGD-1}\\
	\bbv^{k+1}&=\eta_2 \hat \bbv^k+(1-\eta_2) (\bm\nabla^k)^2\label{eq.adaSCGD-2}\\
		\bbtheta^{k+1}&=\bbtheta^k-\alpha_k \frac{\bbh^{k+1}}{\sqrt{\bm{\epsilon}+\hat \bbv^{k+1}}} \label{eq.adaSCGD-3}\\
 \bby^{k+1}&~{\rm via}~\eqref{eq.SCSC-2}~{\rm or}~\eqref{eq.SCSC-3}\notag
\end{align}
\end{subequations} 
where the gradient is defined as {\small$\bm\nabla^k:=\nabla g(\bbtheta^k;\phi^k) \nabla f(\bby^{k+1};\xi^k)$; $\hat \bbv^{k+1}:=\max\{\bbv^{k+1}, \hat \bbv^k\}$} ensures the monotonicity of the scaling factor in \eqref{eq.adaSCGD-3}; the constant vector is $\bbepsilon>0$; and $\eta_1$ and $\eta_2$ are two exponential weighting parameters. The vector division and square in \eqref{eq.adaSCGD} are defined element-wisely.

The key difference of the Adam-SCSC relative to the original Adam is that the stochastic gradient $\bm\nabla^k$ used in the updates \eqref{eq.adaSCGD-1} and \eqref{eq.adaSCGD-2} is not an unbiased estimate of the true one $\nabla F(\bbtheta^k)$. Furthermore, the gradient bias incurred by the Adam update intricately depends on the multi-level compositional gradient estimator, the analysis of which is not only challenging but also of its independent interest.

\subsection{Multi-level compositional problems}\label{sec.multi}

%One can easily verify that while the one-step MAML problem \eqref{opt1} corresponds to \eqref{opt0-m} with $N=2$, the $N$-step MAML problem \eqref{opt2} can be reformulated as the multi-level stochastic compositional optimization problem \eqref{opt0-m}. 
Aiming to solve practical problems with more general stochastic compositional structures, we extend our SCSC method in Section \ref{sec.scg} for \eqref{opt0-2} to the multi-level problem \eqref{opt0}.
As an example, the \emph{multi-step} MAML problem \cite{ji2020feb} can be formulated as the multi-level compositional problem \eqref{opt0}. 
In this case, a globally shared initial model $\bbtheta$ for the $N$-step adaptation can be found by solving 
\begin{align}\label{opt2}
\small
	&\min_{\bbtheta\in \mathbb{R}^d}~F(\bbtheta):=\frac{1}{M}\sum_{m=1}^MF_m\left(\tilde{\bbtheta}^N_m(\bbtheta)\right)\\
	&~~~{\rm with}~~~\tilde{\bbtheta}^{n+1}_m=\tilde{\bbtheta}^n_m-\alpha \nabla F_m(\tilde{\bbtheta}^n_m)~~
	~{\rm recursively}\nonumber
\end{align}
where $\tilde{\bbtheta}^N_m(\bbtheta)$ is obtained after taking $N$ step gradient descent on task $m$ and initialized with $\tilde{\bbtheta}^0_m=\bbtheta$. 

Different from SCSC for the two-level compositional problem \eqref{opt0-2}, the multi-level SCSC (\textbf{multi-SCSC}) requires to track $N-1$ functions $f_1, \cdots, f_{N-1}$ using $\bby_1, \cdots, \bby_{N-1}$. 
Following the tracking update of SCSC, the multi-SCSC update is 
\begin{subequations}\label{eq.SCSCm}
\small
	\begin{align}
		\bby_1^{k+1}&=(1-\beta_k)\bby_1^k+\beta_k f_1(\bbtheta^k;\xi_1^k) +(1-\beta_k)(f_1(\bbtheta^k;\xi_1^k)-f_1(\bbtheta^{k-1};\xi_1^k))\label{eq.SCSCm-1}\\
	    &\cdots \nonumber \\
		\bby_{N-1}^{k+1}&=(1-\beta_k)\bby_{N-1}^k+\beta_k f_{N-1}(\bby_{N-2}^{k+1};\xi_{N-1}^k) +(1-\beta_k)(f_{N-1}(\bby_{N-2}^{k+1};\xi_{N-1}^k)-f_{N-1}(\bby_{N-2}^k;\xi_{N-1}^k))\label{eq.SCSCm-2}\\		
	\bbtheta^{k+1}&=\bbtheta^k- \alpha_k  \nabla f_1(\bbtheta^k;\xi_1^k)\cdots\nabla f_{N-1}(\bby_{N-2}^{k+1};\xi_{N-1}^k)\nabla f_N(\bby_{N-1}^{k+1};\xi_N^k). \label{eq.SCSCm-3}
\end{align}
\end{subequations}
Note that both \eqref{eq.SCSC-2} and \eqref{eq.SCSC-3} can be used in multi-SCSC \eqref{eq.SCSCm}, though above we choose \eqref{eq.SCSC-3}. Multi-SCSC can also incorporate Adam-type update. 
Analyzing multi-SCSC is more challenging that SCSC, since the tracking variables are statistically dependent on each other. Specifically, conditioned on the randomness up to iteration $k$, the variable {\small$\bby_n^{k+1}$} depends on {\small$\bby_{n-1}^{k+1}$} and thus also on {\small$\bby_{n-2}^{k+1},\cdots,\bby_1^{k+1}$}.
Albeit its complex compositional form, as we will show later, multi-SCSC also achieves the same rate of convergence as SGD for non-compositional stochastic optimization. 

% \begin{wrapfigure}{R}{0.45\textwidth}
% \vspace{-0.5cm}
% \begin{minipage}{0.45\textwidth}
% \small
\begin{algorithm}[t]
\caption{Adam SCSC method}\label{alg:adascg}
%%%%%%%%%%%%%%%%%%%%%%%%%%%%%%%%%%%%%%%%
    \begin{algorithmic}[1]
    \State{\textbf{initialize:}~$\bbtheta^0$, $\bby^0$, $\bbv^0$, $\bbh^0$, $\eta_1$, $\eta_2$, $\alpha_0, \beta_0$}
    \For{$k= 1, 2,\ldots, K$}
           \State{randomly select datum $\phi^k$} 
           \State{compute $g(\bbtheta^k;\phi^k)$ and $\nabla g(\bbtheta^k;\phi^k)$}
            \State{update variable $\bby^{k+1}$ via \eqref{eq.SCSC-2} or \eqref{eq.SCSC-3}}
            \State{randomly select datum $\xi^k$} 
            \State{compute $\nabla f(\bby^{k+1};\xi^k)$} 
            \State{update $\bbh^{k+1}, \bbv^{k+1}, \bbtheta^{k+1}$ via \eqref{eq.adaSCGD}}
            % \State{(Alternatively) Run Lines 4, 5 now, \emph{not} next time}
    \EndFor
    \end{algorithmic}
\end{algorithm}
% \end{minipage}
% \vspace*{-2cm}
% \end{wrapfigure}

% \subsection{Local Stochastic Compositional Gradient Method}
% \label{sec.lscg}

% In this subsection, we apply our new stochastic compositional gradient method to personalized federated learning problem, and present the communication-efficient variant of it. 

% For the personalized FL with \textbf{one-step} adaptation problem \eqref{opt1}, we have a slightly different objective
% \begin{align}\label{opt0-3}
% 	\min_{\bbtheta\in \mathbb{R}^d}~~~F(\bbtheta):=\sum_{m\in{\cal M}}\mathbb{E}_{\xi_m}\left[f_m\left(\mathbb{E}_{\phi_m}[g_m(\bbtheta;\phi_m)];\xi_m\right)\right]
% \end{align}
% where $\phi_m, \xi_m$ are independent but from the same distribution, and the functions are defined as
% \begin{equation}\label{opt0-3-1}
%     g_m(\bbtheta)=\mathbb{E}_{\phi_m}[g_m(\bbtheta;\phi_m)]:=\bbtheta-\alpha \nabla F_m(\bbtheta)~~~{\rm and}~~~f_m(\bbtheta)=\mathbb{E}_{\xi_m}\left[f(\bbtheta;\xi_m)\right]:=F_m(\bbtheta).
% \end{equation}

%\clearpage
\section{Convergence Analysis of SCSC}
\label{sec.ic-ana}
In this section, we establish the convergence of all SCSC algorithms. 
%Due to space limitation, we put our results for the multi-level case and the proofs of all the claims in the supplementary document. 
For our analysis, in addition to Assumptions 1 and 2, we make the following assumptions.
%\noindent\textbf{Assumption 3.} \emph{Random variables $\{(\phi^k, \xi^k)\}_{k=0}^{\infty}$ are i.i.d. such that {\small$\mathbb{E}\left[g(\bbtheta;\phi^k)\right]=g(\bbtheta)$, and $\mathbb{E}\left[\nabla g(\bbtheta;\phi^k) \nabla f(g(\bbtheta);\xi^k)\right]=\nabla g(\bbtheta) \nabla f(g(\bbtheta))$}.}

\noindent\textbf{Assumption 3.} \emph{Sampling oracle satisfies that i) {\small$\mathbb{E}\left[g(\bbtheta;\phi^k)\right]=g(\bbtheta)$, and, ii) $\mathbb{E}\left[\nabla g(\bbtheta;\phi^k) \nabla f(\bby;\xi^k)\right]=\nabla g(\bbtheta) \nabla f(\bby)$}.}
%\begin{equation}\label{eq.ass3}
%\mathbb{E}\left[\|\nabla g(\bbtheta;\phi^k)\|^2\mid {\cal F}^k\right]\leq C_g^2,~~~\mathbb{E}\left[\|\nabla f(\bby;\xi^k)\|^2\mid {\cal F}^k\right]\leq C_f^2. 
%\end{equation}

\noindent\textbf{Assumption 4.} \emph{Function $g(\bbtheta;\phi^k)$ has bounded variance, i.e.,} {\small$\EE\left[\|g(\bbtheta;\phi^k)-g(\bbtheta)\|^2\right]\leq V_g^2$}.
%\begin{equation}\label{eq.ass4}
%\EE\left[\|g(\bbtheta;\phi^k)-g(\bbtheta)\|^2\right]\leq V_g^2.
%\end{equation}

Assumptions 3 and 4 are standard in stochastic compositional optimization; e.g., \cite{wang2017mp,wang2017jmlr,lian2017aistats,zhang2019nips}, and are analogous to the unbiasedness and variance assumptions for stochastic non-compositional problems. Note that the independence of $\phi^k$ and $\xi^k$ is sufficient but not necessary for Assumption 3.

\subsection{Convergence in the two-level case}\label{subsec.two}

With insights gained from the continuous-time Lyapunov function \eqref{eq.ct-lyap}, 
our analysis in this subsection critically builds on the following discrete-time Lyapunov function:
 \begin{equation}\label{eq.Lyap}
 \small
 	{\cal V}^k:=F(\bbtheta^k)-F(\bbtheta^*)+ \|g(\bbtheta^{k-1})-\bby^k\|^2
 \end{equation}
 where {\small$\bbtheta^*$} is the optimal solution of the problem \eqref{opt0-2}.
 
 \begin{lemma}[Tracking variance of SCSC]
\label{lemma2}
Consider {\small${\cal F}^k$} as {\small${\cal F}^k:=\left\{\phi^0, \ldots, \phi^{k-1}, \xi^0, \ldots, \xi^{k-1}\right\}$}. 
Suppose Assumptions 1-4 hold, and $\bby^{k+1}$ is generated by running SCSC iteration \eqref{eq.SCSC-1} and \eqref{eq.SCSC-3} conditioned ${\cal F}^k$. The mean square error of $\bby^{k+1}$ satisfies 
{\small\begin{align}\label{eq.lemma2}
 \EE\left[\|g(\bbtheta^k)-\bby^{k+1}\|^2\mid{\cal F}^k\right]\leq (1-\beta_k)^2\|g(\bbtheta^{k-1})-\bby^k\|^2  +2(1-\beta_k)^2C_g^2\|\bbtheta^k-\bbtheta^{k-1}\|^2+2\beta_k^2V_g^2.
\end{align}}
\end{lemma}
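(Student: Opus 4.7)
The plan is to reveal that the SCSC correction term makes the residual $\bby^{k+1}-g(\bbtheta^k)$ a contraction of $\bby^k-g(\bbtheta^{k-1})$ plus a \emph{conditionally mean-zero} noise, so that the cross term in the squared norm vanishes and only a small ``increment variance'' remains.

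\textbf{Step 1 (Algebraic rearrangement).} Starting from \eqref{eq.SCSC-3}, subtract $g(\bbtheta^k)$ from both sides, and add and subtract $(1-\beta_k)g(\bbtheta^{k-1})$ to obtain
\begin{equation*}
\bby^{k+1}-g(\bbtheta^k)=(1-\beta_k)\bigl(\bby^k-g(\bbtheta^{k-1})\bigr)+\eta_k,
\end{equation*}
where the noise term is
\begin{equation*}
\eta_k:=\bigl[g(\bbtheta^k;\phi^k)-g(\bbtheta^k)\bigr]-(1-\beta_k)\bigl[g(\bbtheta^{k-1};\phi^k)-g(\bbtheta^{k-1})\bigr].
\end{equation*}
The identity $\beta_k+(1-\beta_k)=1$ makes all the ``coefficient-one'' copies of $g(\bbtheta^k;\phi^k)$ and $g(\bbtheta^{k-1})$ collapse cleanly; this is the whole point of the SCSC correction relative to the plain SCGD update \eqref{eq.wSCGD-1}.

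\textbf{Step 2 (Conditional unbiasedness).} By Assumption 3, $\EE[g(\bbtheta;\phi^k)\mid{\cal F}^k]=g(\bbtheta)$ for any ${\cal F}^k$-measurable $\bbtheta$; since $\bbtheta^k$ and $\bbtheta^{k-1}$ are both in ${\cal F}^k$, we get $\EE[\eta_k\mid{\cal F}^k]=0$. Because $(1-\beta_k)(\bby^k-g(\bbtheta^{k-1}))$ is ${\cal F}^k$-measurable, the cross term vanishes and Pythagoras gives
\begin{equation*}
\EE\bigl[\|\bby^{k+1}-g(\bbtheta^k)\|^2\mid{\cal F}^k\bigr]=(1-\beta_k)^2\|\bby^k-g(\bbtheta^{k-1})\|^2+\EE\bigl[\|\eta_k\|^2\mid{\cal F}^k\bigr].
\end{equation*}

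\textbf{Step 3 (Bounding the noise).} Regroup $\eta_k$ as a convex-type combination,
\begin{equation*}
\eta_k=(1-\beta_k)\Bigl\{\bigl[g(\bbtheta^k;\phi^k)-g(\bbtheta^{k-1};\phi^k)\bigr]-\bigl[g(\bbtheta^k)-g(\bbtheta^{k-1})\bigr]\Bigr\}+\beta_k\bigl[g(\bbtheta^k;\phi^k)-g(\bbtheta^k)\bigr].
\end{equation*}
Apply $\|a+b\|^2\le 2\|a\|^2+2\|b\|^2$. The $\beta_k$-scaled piece is controlled directly by Assumption 4, giving $\EE[\|g(\bbtheta^k;\phi^k)-g(\bbtheta^k)\|^2\mid{\cal F}^k]\le V_g^2$. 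For the $(1-\beta_k)$-scaled piece, note it is the deviation of $g(\bbtheta^k;\phi^k)-g(\bbtheta^{k-1};\phi^k)$ from its conditional mean, so its second moment is upper bounded by $\EE[\|g(\bbtheta^k;\phi^k)-g(\bbtheta^{k-1};\phi^k)\|^2\mid{\cal F}^k]$. A fundamental-theorem-of-calculus expansion $g(\bbtheta^k;\phi^k)-g(\bbtheta^{k-1};\phi^k)=\int_0^1\nabla g(\bbtheta^{k-1}+t(\bbtheta^k-\bbtheta^{k-1});\phi^k)^\top(\bbtheta^k-\bbtheta^{k-1})\,dt$ combined with Jensen's inequality and Assumption 2 then yields a bound of $C_g^2\|\bbtheta^k-\bbtheta^{k-1}\|^2$. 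Collecting the two pieces gives $\EE[\|\eta_k\|^2\mid{\cal F}^k]\le 4(1-\beta_k)^2 C_g^2\|\bbtheta^k-\bbtheta^{k-1}\|^2+2\beta_k^2 V_g^2$ (the factor $4$ absorbs the $2$ from $\|a+b\|^2\le 2\|a\|^2+2\|b\|^2$ and a further factor from the deviation-vs-raw-moment bookkeeping).

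\textbf{Main obstacle.} The only real subtlety is the algebraic rewrite in Step 1, which is exactly where the SCSC correction pays off: the ``extra'' term $(1-\beta_k)(g(\bbtheta^k;\phi^k)-g(\bbtheta^{k-1};\phi^k))$ cancels precisely the bias $(1-\beta_k)(g(\bbtheta^k)-g(\bbtheta^{k-1}))$ that plain SCGD inherits from using $\bby^k\approx g(\bbtheta^{k-1})$ as a proxy for $g(\bbtheta^k)$. Without this cancellation, $\eta_k$ would carry a non-zero conditional mean of order $\|\bbtheta^k-\bbtheta^{k-1}\|$, the cross term in Step 2 would survive, and one would be forced into the two-timescale regime $\alpha_k/\beta_k\to 0$. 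A secondary technical point is promoting the expectation bound in Assumption 2 to the pointwise Lipschitz-in-mean-square estimate on $g(\cdot;\phi^k)$ used at the end of Step 3, which is handled by the integral representation above.
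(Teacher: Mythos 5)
Your proof is correct and follows essentially the same route as the paper: write $\bby^{k+1}-g(\bbtheta^k)$ as $(1-\beta_k)(\bby^k-g(\bbtheta^{k-1}))$ plus a conditionally mean-zero noise term (your $\eta_k$ is exactly the paper's $(1-\beta_k)T_1+\beta_k T_2+(1-\beta_k)T_3$), kill the cross term, and bound the noise second moment via Young's inequality together with Assumptions 2 and 4. Your regrouping of the $(1-\beta_k)$-piece as a centered increment in fact yields the slightly sharper constant $2(1-\beta_k)^2C_g^2$ in place of the paper's $4(1-\beta_k)^2C_g^2$, so the stated bound follows a fortiori.
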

%\begin{proof}
%See Appendix. 
%\end{proof}
Intuitively, since {\small$\|\bbtheta^k-\bbtheta^{k-1}\|^2={\cal O}(\alpha_{k-1}^2)$}, Lemma \ref{lemma2} implies that if the stepsizes {\small$\alpha_k^2$} and {\small$\beta_k^2$} are decreasing, the mean square error of $\bby^{k+1}$ will decrease. Note that Lemma \ref{lemma2} presents the performance of $\bby^{k+1}$ using the update \eqref{eq.SCSC-3}. If we use the update \eqref{eq.SCSC-2} instead, the bound in \eqref{eq.lemma2} will have an additional term {\small$(1-\beta_k)^2\|\bbtheta^k-\bbtheta^{k-1}\|^4$}. Under a stronger version of Assumption 2 (e.g., fourth moments), the remaining analysis still follows; see the derivations in supplementary material. 

Building upon Lemma \ref{lemma2}, we establish the following theorem.
 
\begin{theorem}[two-level SCSC]\label{theorem1}
Under Assumptions 1-4, if we choose the stepsizes as {\small$\alpha_k=\frac{2\beta_k}{C_g^2L_f^2}=\alpha=\frac{1}{\sqrt{K}}$}, the iterates $\{\bbtheta^k\}$ of SCSC in Algorithm \ref{alg:scg} satisfy
\begin{equation}\label{eq.theorem1}
 \small
    \frac{\sum_{k=0}^{K-1}\EE[\|\nabla F(\bbtheta^k)\|^2]}{K}\leq\frac{2{\cal V}^0+2B_1}{\sqrt{K}}
\end{equation}
where the constant is defined as {\small$B_1:=\frac{L}{2}C_g^2C_f^2+4V_g^2+16C_g^4C_f^2$}.
\end{theorem}

%\begin{proof}
%See Appendix. 
%\end{proof}

Theorem \ref{theorem1} implies that the convergence rate of SCSC is {\small${\cal O}(k^{-\frac{1}{2}})$}, which is on the same order of SGD's convergence rate for the stochastic non-compositional nonconvex problems \cite{ghadimi2013sgd}, and significantly improves {\small${\cal O}(k^{-\frac{1}{4}})$} of the original SCGD \cite{wang2017mp} and {\small${\cal O}(k^{-\frac{4}{9}})$} of its accelerated version \cite{wang2017jmlr}. 
Comparing with \cite{ghadimi2020jopt,ruszczynski2020stochastic} that achieves the same rate of {\small${\cal O}(k^{-\frac{1}{2}})$} for the two-level problem, our algorithm is simpler which makes it possible to adopt the Adam update. 
In addition, this rate is not directly comparable to those under  variance-reduced compositional methods, e.g., \cite{lian2017aistats,hu2019efficient,zhang2019icml,zhang2019nips} since SCSC does not need the increasing batchsize nor double-loop. 
% Notably, the performance gain of our new method is the result of the key algorithmic changes. (I REMOVED IT BECAUSE IT DOES NOT FIT THE CONTEXT)

\subsection{Convergence of Adam-SCSC}\label{subsec.adam}

The convergence analysis for Adam SCSC builds on the following Lyapunov function:
 \begin{align}\label{eq.Lyap1}
 	{\cal V}^k:=&F(\bbtheta^k)\!-\!F(\bbtheta^*)\!-\!\sum\limits_{j=k}^{\infty}\eta_1^{j-k+1}\alpha_j\left\langle \nabla F(\bbtheta^{k-1}), \frac{\bbh^k}{\sqrt{\bm{\epsilon}+\hat \bbv^k}} \right\rangle + c\left\|g(\bbtheta^{k-1})-\bby^k\right\|^2
 \end{align}
 where $c$ is a constant that depends on $\eta_1, \eta_2$ and $\epsilon$. 
Clearly, the Lyapunov function \eqref{eq.Lyap1} is a generalization of \eqref{eq.Lyap} for SCSC, which takes into account the adaptive gradient update by subtracting the inner product between the full gradient and the Adam SCSC update.
 Intuitively, if the adaptive stochastic gradient direction is aligned with the gradient direction, this term will also become small. 
 
 To establish the convergence of Adam SCSC, we need a slightly stronger version of Assumption 2, which is standard in analyzing the convergence of Adam \cite{kingma2014adam,reddi2019adam,chen2019adam}.
 
\noindent\textbf{Assumption 5.} \emph{Stochastic gradients are bounded almost surely,} {\small$\left\|\nabla g(\bbtheta;\phi)\right\|\leq C_g, \left\|\nabla f(\bby;\xi)\right\|\leq C_f$}.
%\begin{equation}\label{eq.ass5}
%%\left\|\nabla g(\bbtheta;\phi)\nabla f(\bby;\xi)\right\|\leq C_gC_f. 
%\left\|\nabla g(\bbtheta;\phi)\right\|\leq C_g,\quad \left\|\nabla f(\bby;\xi)\right\|\leq C_f.
%\end{equation}

Analogous to Theorem \ref{theorem1}, we establish the convergence of Adam SCSC under nonconvex settings. 

\begin{theorem}[Adam SCSC]\label{theorem4}
Under Assumptions 1 and 3-5, if we choose the parameters {\small$\eta_1<\sqrt{\eta_2}<1$}, and the stepsizes as {\small$\alpha_k=2\beta_k=\frac{1}{\sqrt{K}}$}, the iterates {\small$\{\bbtheta^k\}$} of Adam SCSC satisfy
{\small\begin{align}\label{eq.theorem4}
\frac{1}{K}\sum_{k=0}^{K-1}\EE[\|\nabla F(\bbtheta^k)\|^2]\leq& \frac{2(\epsilon+C_g^2C_f^2)^{\frac{1}{2}}}{(1-\eta_1)}\times \nonumber\\
  &\bigg(\frac{C_gC_fd\tilde{\eta}}{K} +\frac{{\cal V}^0\!+\!(4C_g^2\tilde{\eta}+V_g^2)c \!+\! 2d\tilde{\eta}L}{\sqrt{K}}\!+\!\frac{(1\!+\!(1-\eta_1)^{-1})C_g^2C_f^2d \epsilon^{-\frac{1}{2}}}{K}\!\bigg)
\end{align}}
where $d$ is the dimension of $\bbtheta$, and the constant is defined as {\small$\tilde{\eta}:=(1-\eta_1)^{-1}(1-\eta_2)^{-1}(1-\eta_1^2/\eta_2)^{-1}$}.
\end{theorem}

Theorem \ref{theorem4} implies that the convergence rate of Adam SCSC is also {\small${\cal O}(k^{-\frac{1}{2}})$}. This rate is again on the same order of Adam's convergence rate for the stochastic non-compositional nonconvex problems \cite{chen2019adam}, and significantly faster than  {\small${\cal O}(k^{-\frac{4}{9}})$} of the existing adaptive compositional SGD method \cite{tutunov2020comp-adam}. 
As a by-product, the newly designed Lyapunov function \eqref{eq.Lyap1} also significantly streamlines the original analysis of Adam under nonconvex settings \cite{chen2019adam}, which is of its independent interest. 

\subsection{Convergence of multi-SCSC} 
In this section, we establish the convergence results of the multi-level SCSC, and present the corresponding analysis. 

The subsequent analysis for the multi-level problem builds on the following \emph{Lyapunov function}:
 \begin{equation}\label{eq.Lyap2}
 \small
 	{\cal V}^k:=F(\bbtheta^k)-F(\bbtheta^*)+ \sum_{n=1}^{N-1} \left\|\bby_n^k-f_n(\bby_{n-1}^k)\right\|^2
 \end{equation}
 where $\bbtheta^*$ is the optimal solution of the problem \eqref{opt0}.

To this end, we need a generalized version of Assumptions 1-4 for the multi-level setting. 

\noindent\textbf{Assumption m1.}
\emph{Functions $\{f_n\}$ are $L_n$-smooth, that is, for any $\bbtheta, \bbtheta'\in\mathbb{R}^d$,} {\small$\|\nabla f_n(\bbtheta;\xi_n)-\nabla f_n(\bbtheta';\xi_n)\|\leq L_n\|\bbtheta-\bbtheta'\|$}.
%\begin{equation}\label{eq.ass1}
%\|\nabla f(\bbtheta)-\nabla f(\bbtheta')\|\leq L_f\|\bbtheta-\bbtheta'\|,~~~\|\nabla g(\bbtheta)-\nabla g(\bbtheta')\|\leq L_g\|\bbtheta-\bbtheta'\|.
%\end{equation}

\noindent\textbf{Assumption m2.}
\emph{The stochastic gradients $\{\nabla f_n\}$ are bounded, i.e., {\small$\mathbb{E}\left[\|\nabla f_n(\bbtheta;\xi_n)\|^2\right]\leq C_n^2$}.}

\noindent\textbf{Assumption m3.} \emph{Sampling oracle satisfies that {\small$\mathbb{E}\left[f_n(\bbtheta;\xi_n^k)\right]=f_n(\bbtheta),\,\forall n$, and $\mathbb{E}\left[\nabla f_1(\bbtheta;\xi_1^k)\cdots\nabla f_N(\bby_{N-1};\xi_N^k)\right]=\nabla f_1(\bbtheta)\cdots\nabla f_N(\bby_{N-1})$}.}
%\begin{equation}\label{eq.ass3}
%\mathbb{E}\left[\|\nabla g(\bbtheta;\phi^k)\|^2\mid {\cal F}^k\right]\leq C_g^2,~~~\mathbb{E}\left[\|\nabla f(\bby;\xi^k)\|^2\mid {\cal F}^k\right]\leq C_f^2. 
%\end{equation}

\noindent\textbf{Assumption m4.} \emph{For all $n$, $f_n(\bbtheta;\xi_n)$ has bounded variance, i.e.,} {\small$\EE\left[\|f_n(\bbtheta;\xi_n)-f_n(\bbtheta)\|^2\right]\leq V^2$}.

Building upon these assumptions, we establish the convergence of multi-SCSC. 

\begin{theorem}[multi-level SCSC]\label{theorem-mSCSC}
Under Assumptions m1-m4, if we choose the stepsizes as $\alpha_k=\frac{2\beta_k}{\sum_{n=1}^{N-1}A_n^2}=\frac{1}{\sqrt{K}}$, the iterates $\{\bbtheta^k\}$ of the multi-level SCSC iteration \eqref{eq.SCSCm} satisfy
\begin{equation}\label{eq.theorem-mSCSC}
\small
    \frac{\sum_{k=0}^{K-1}\EE[\|\nabla F(\bbtheta^k)\|^2]}{K}\leq\frac{2{\cal V}^0+2(B_2+B_3{(\sum_{n=1}^{N-1}A_n^2)^2}/{4})}{\sqrt{K}}.
\end{equation}
where $B_2, B_3, A_1, \ldots, A_N$ are some constants that depend on $C_1, \ldots, C_N$ and $L_1, \ldots, L_N$. 
\end{theorem}
Theorem \ref{theorem-mSCSC} implies that the convergence rate of multi-SCSC is also {\small${\cal O}(k^{-\frac{1}{2}})$}. This rate is again on the same order of SGD's rate for the stochastic non-compositional nonconvex problems.

% \clearpage
\section{Numerical Experiments}
\label{sec.experi}
To validate our theoretical results, this section evaluates the empirical performance of our SCSC and Adam SCSC. 
We evaluate the empirical performance of SCSC and Adam SCSC in two tasks:  risk-averse portfolio management and sinusoidal regression for MAML. 
All experiments are run on a computer with Intel i9-9960x and NVIDIA Titan GPU. 
%For all compared algorithms, we follow the order of stepsizes suggested in their papers, and the initial stepsize $\alpha$ is chosen from {\small$\{10^0, 10^{-1}, 10^{-2}, 10^{-3}, 10^{-4}, 10^{-5}\}$} and optimized for each algorithm. 
%The detailed setup and the choice of parameters are in the supplementary materials.

\subsection{Risk-averse portfolio management}
%\textbf{Problem formulation.} 
Given $d$ assets, let $\bbr_t\in\mathbb{R}^d$ denote the reward vector with $n$th entry representing the reward of $n$th asset observed at time slot $t$ over a total of $T$ slots. Portfolio management aims to find an investment $\bbtheta\in\mathbb{R}^d$ with $n$th entry representing the amount of investment or the split of the total investment allocated to the asset $n$. The optimal investment $\bbtheta^*$ is the one that solves the following problem
\begin{equation}\label{eq.sim-port}
    \max\limits_{\bbtheta\in\mathbb{R}^d}\ \frac{1}{T}\sum\limits_{t=1}^T \bbr_t^{\top} \bbtheta-\frac{1}{T}\sum\limits_{t=1}^T\Big(\bbr_t^{\top}\bbtheta-\frac{1}{T}\sum\limits_{j=1}^T \bbr_j^{\top}\bbtheta\Big)^2.
\end{equation}
 In this case, both random variables $\xi$ and $\phi$ in \eqref{opt0-2} are uniformly drawn from $\{\bbr_1,\cdots,\bbr_T\}$. 
If we define $g(\bbtheta; \bbr_j)=[\bbtheta, \bbr_j^{\top}\bbtheta]^{\top}\in\mathbb{R}^{d+1}$, and $\bby\in\mathbb{R}^{d+1}$ tracking $\mathbb{E}[g(\bbtheta; \bbr)]$, and define 
%$f(\bby; \bbr_t)=\bby_{(d+1)}-(\bby_{(d+1)}-\bbr_t^{\top}\bby_{(1:d)})^2$ 
\begin{equation}
	f(\bby; \bbr_t)=\bby_{(d+1)}-(\bby_{(d+1)}-\bbr_t^{\top}\bby_{(1:d)})^2
\end{equation}
where $\bby_{(1:d)}$ and $\bby_{(d+1)}$ denote the first $d$ entries and the $(d+1)$th entry of $\bby$. In this case, problem \eqref{eq.sim-port} is an instance of stochastic composition problem \eqref{opt0-2}.
%\begin{align*}
%    g_j(\bbtheta)=[\bbtheta, \bbr_j^{\top}\bbtheta]^{\top},\quad f_t(y, z)=\bbr_t^{\top}y-(\bbr_t^{\top}y-z)^2
%\end{align*}

\begin{figure*}[t]
 \vspace*{-0.2cm}
%    \hspace{0.2cm}
\centering
   \includegraphics[width=0.33\textwidth]{./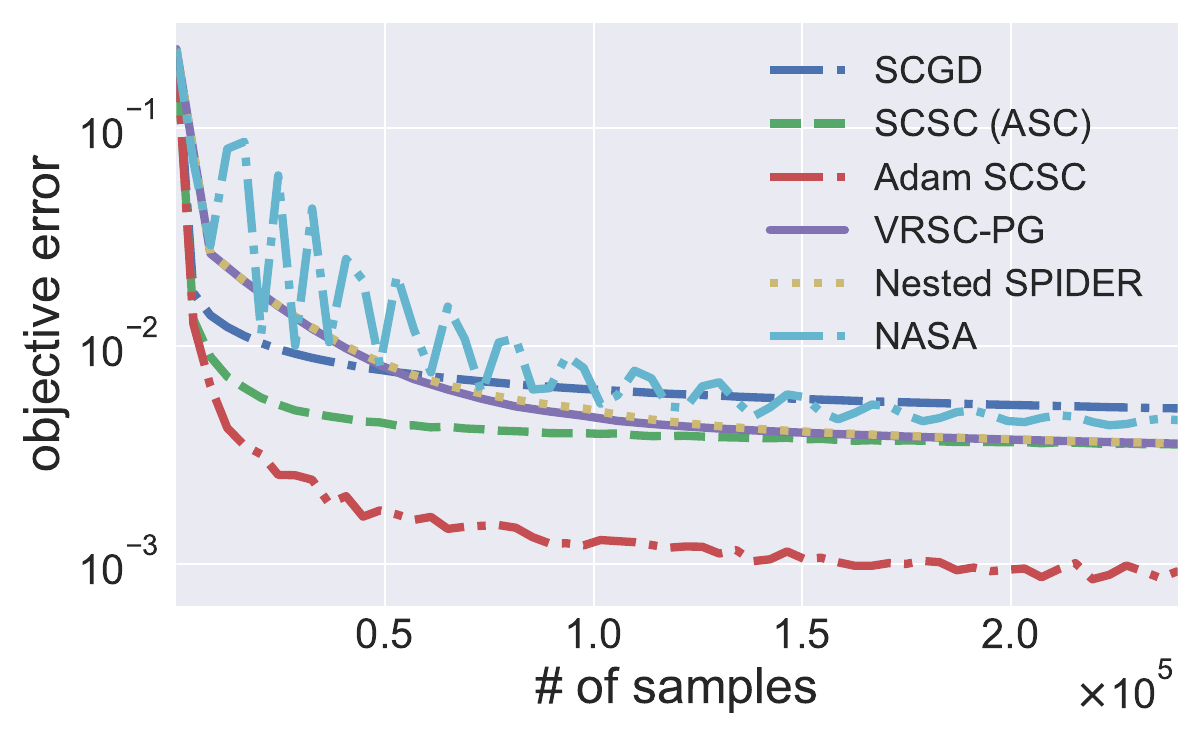} 
   \hspace{-0.15cm}
   \includegraphics[width=0.33\textwidth]{./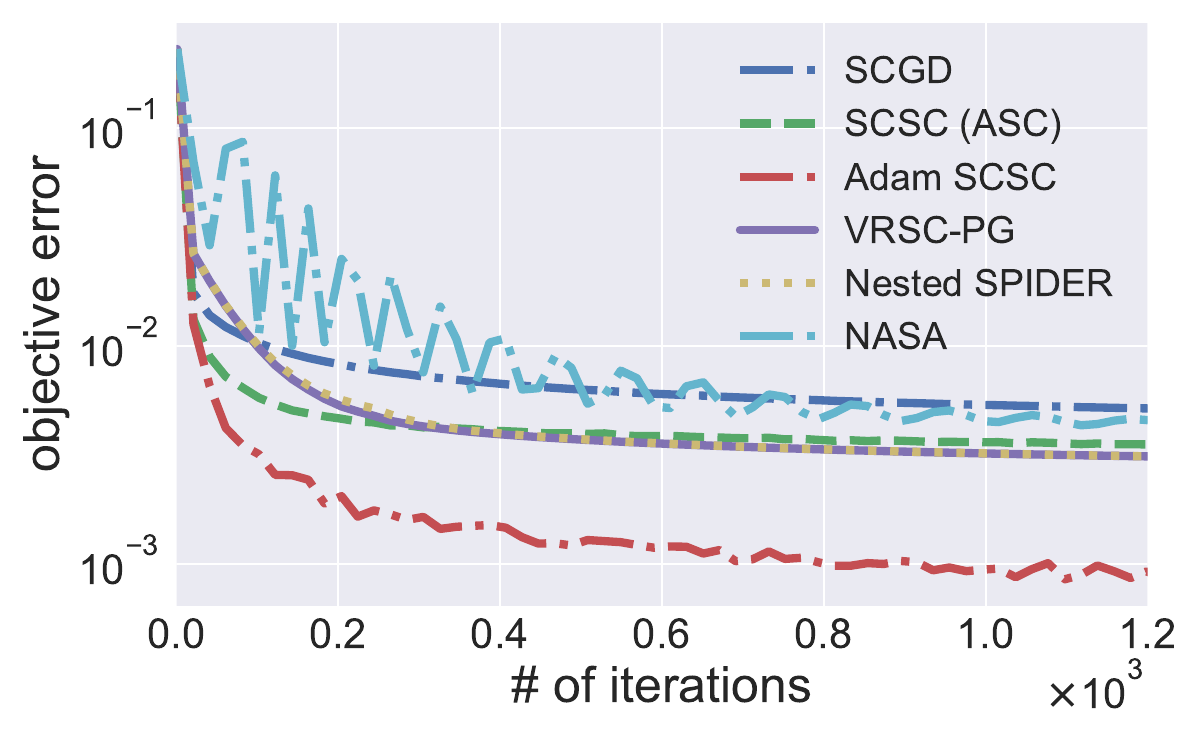} 
  \hspace{-0.15cm}
   \includegraphics[width=0.33\textwidth]{./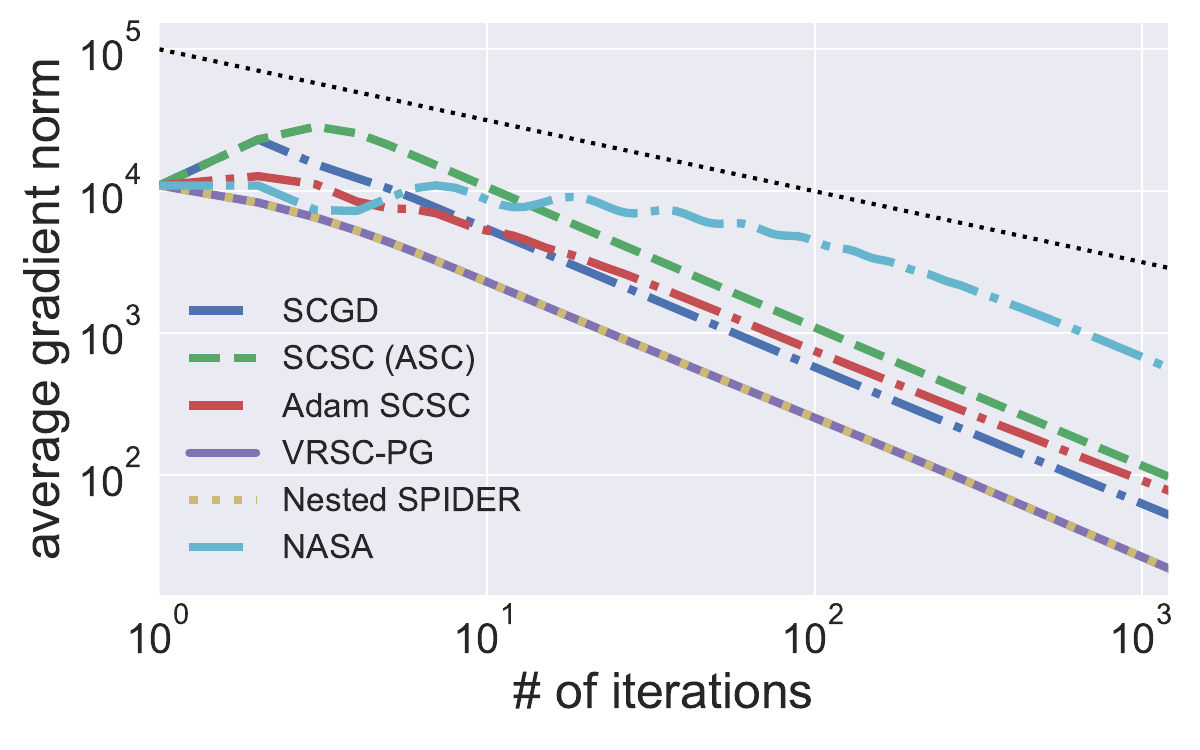} 
 \vspace*{-0.5cm}   
    \caption{\blue{Summary of results on the \emph{Industrial-49} dataset.}}
    \label{fig:portfolio1}
%    \vspace*{-0.4cm}
\end{figure*}

\begin{figure*}[t]
%    \hspace{0.2cm}
\centering
    \includegraphics[width=0.33\textwidth]{./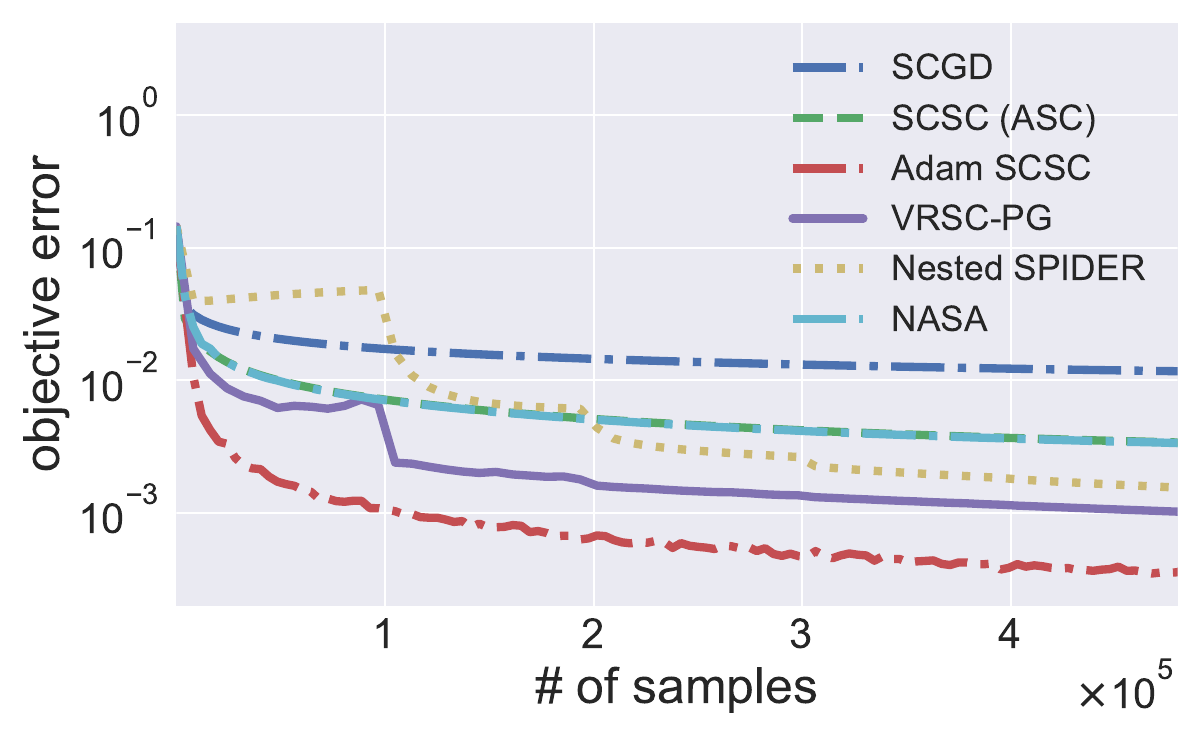}\hspace{-0.1cm}\includegraphics[width=0.33\textwidth]{./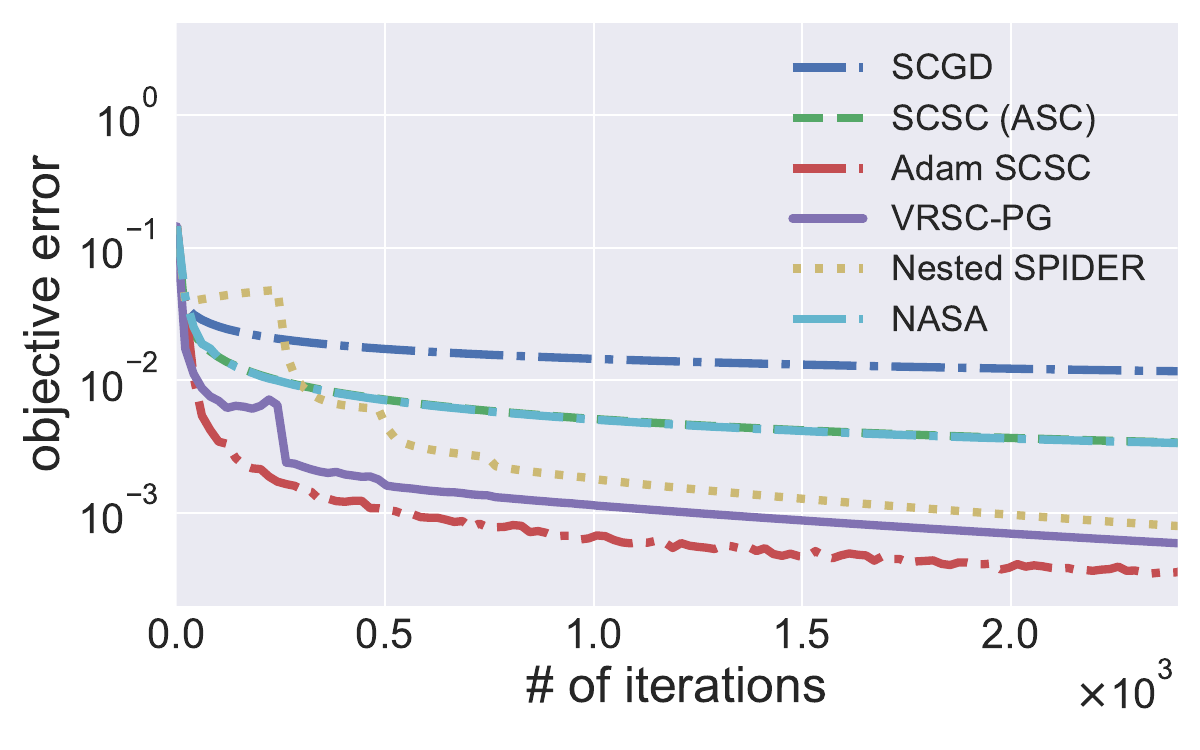} \hspace{-0.1cm}\includegraphics[width=0.33\textwidth]{./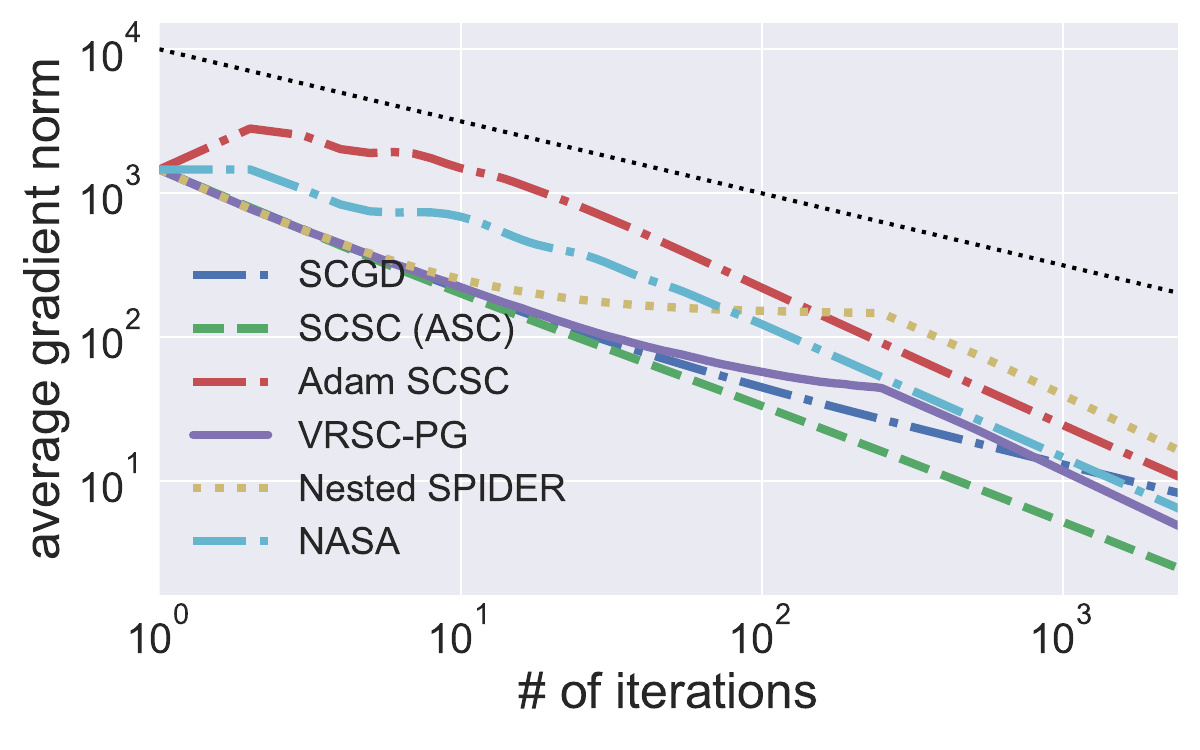} 
    \vspace*{-0.3cm}
    \caption{\blue{Summary of results on the \emph{100 Book-to-Market} dataset.}}
    \label{fig:portfolio2}
   \vspace*{-0.2cm}
\end{figure*}

\noindent\textbf{Benchmark algorithms.} 
We compare SCSC and Adam SCSC with SCGD\cite{wang2017mp}, VRSC-PG \cite{huo2018accelerated}, Nested SPIDER \cite{zhang2019nips} and \blue{the state-of-the-art NASA \cite{ghadimi2020jopt}.}  
For linear $g(\bbtheta; \bbr)$, SCSC is equivalent to the accelerated SCGD (ASC) \cite{wang2017jmlr}, and our SCSC and Adam SCSC under two different inner update rules \eqref{eq.SCSC-2} and \eqref{eq.SCSC-3} are also equivalent. Therefore, we only include SCSC with \eqref{eq.SCSC-3} in the simulation. 

\noindent\textbf{Hyperparameter tuning.} 
We tune the hyperparameters by first following the suggested order of stepsizes from the original papers and then using a grid search for the constant. 
For example, we choose {\small$\alpha_k=\alpha k^{-3/4}, \beta_k=k^{-1/2}$} for SCGD; {\small$\alpha_k=\alpha k^{-1/2}, \beta_k= k^{-1/2}$} for SCSC and Adam SCSC; the constant stepsize $\alpha$ for VRSC-PG and Nested SPIDER. 
The initial learning rate $\alpha$ is chosen from the searching grid  {\small$\{10^{-1}, 10^{-2}, 10^{-3}, 10^{-4}, 10^{-5}\}$} and optimized for each algorithm in terms of loss versus the number of iterations. Note that whenever the best performing hyperparameter lies in the boundary of the searching grid, we always extend the grid to make the final hyperparameter fall into the interior of the grid. 
For all the algorithms, we use the batch size 100 for inner and outer functions. 

Figures \ref{fig:portfolio1} and \ref{fig:portfolio2} show the test results averaged over 50 runs on two benchmark datasets: \emph{Industrial-49} and \emph{100 Book-to-Market}. 
The two datasets are downloaded from the Keneth R. French Data Library\footnote{http://mba.tuck.dartmouth.edu/pages/faculty/ken.french/data\_library.html}.
% without preprocessing. 
%Given $d$ assets, let {\small$\bbr_t\in\mathbb{R}^d$} denote the reward with $n$th entry representing the reward of $n$th asset observed at time slot $t$ over a total of $T$ slots. Portfolio management finds an investment {\small$\bbtheta\in\mathbb{R}^d$} with $n$th entry representing the investment or split of the total investment allocated to asset $n$. 
On both datasets, Adam SCSC achieves the best performance. SCSC outperforms several popular alternatives in \emph{Industrial-49} dataset, and performance very close to NASA in \emph{100 Book-to-Market} dataset.
\blue{To verify the empirical convergence rates of SCSC, Figures \ref{fig:portfolio1} and \ref{fig:portfolio2} have also shown the convergence of the average (squared) gradient norm in the log-log scale. By comparing the curves of SCSC and Adam SCSC with the dashed black line that indicates the theoretical ${\cal O}(k^{-\frac{1}{2}})$ rate in Theorems \ref{theorem1} and \ref{theorem4}, it is clear that the empirical convergence rates under SCSC and Adam SCSC are no worse than the worst-case theoretical rates.} 

\subsection{Sinusoidal regression for MAML}
%\textbf{Problem formulation.}  
For MAML, we consider the sinusoidal regression tasks as that in \cite{finn2017icml}. 
Each task in MAML is to regress from the input to the output of a sine wave 
$s(x;a,\varphi)=a\sin(x+\varphi)$, 
% \begin{equation}
%   s(x;a,\varphi)=a\sin(x+\varphi) 
% \end{equation}
where the amplitude $a$ and phase $\varphi$ of the sinusoid vary across tasks.
We sample the amplitude $a$ uniformly from ${\cal U}([0.1, 5])$ and the phase $\varphi$ uniformly from ${\cal U}([0,2\pi])$. During training, datum $x$ is sampled uniformly from ${\cal U}([-5,5])$ and $s(x;a,\varphi)$ is observed. We use a neural network with 2 hidden layers and RELU activation functions, and use $\bbtheta$ for its weights $\bbtheta$ and $\hat{s}(x;\bbtheta)$ for its output. Using the mean square error $\mathbb{E}_x[\|\hat{s}(x;\bbtheta)-s(x;a,\varphi)\|^2]$,  we define
\begin{equation}
    F_m(\bbtheta)=\mathbb{E}_x[\|\hat{s}(x;\bbtheta)-s(x;a_m,\varphi_m)\|^2].
\end{equation}
In this case, to connect with \eqref{opt0-2}, both random variables $\xi$ and $\phi$ in \eqref{opt0-2} are uniformly drawn from ${\cal U}([-5,5])$. 
Let us define 
\begin{align}
    g(\bbtheta)\!=\![ g_1(\bbtheta)^{\top},\cdots, g_M(\bbtheta)^{\top}]^{\top}~{\rm with}~~g_m(\bbtheta)\!:=\!\bbtheta-\nabla F_m(\bbtheta)\nonumber
    % \\ &=[(\bbtheta-\nabla F_1(\bbtheta))^{\top},\cdots,(\bbtheta-\nabla F_M(\bbtheta))^{\top}]^{\top}\in \mathbb{R}^{Md}\nonumber
\end{align}
and define $\bby_m\in  \mathbb{R}^{d}$ to track $g_m(\bbtheta)$. With $\bby:=[\bby_1^{\top},\cdots,\bby_M^{\top}]^{\top}$, we define $f(\bby):=\frac{1}{M}\sum\limits_{m=1}^MF_m(\bby_m)$. 
% \begin{equation}
%  f(\bby):=\frac{1}{M}\sum\limits_{m=1}^MF_m(\bby_m).
% \end{equation}
Then MAML with sinusoidal regression satisfies the formulation \eqref{opt0-2}. 

\begin{figure}[t]
 \vspace*{-0.2cm}
%    \hspace{0.2cm}
\centering
    \includegraphics[width=0.4\textwidth]{./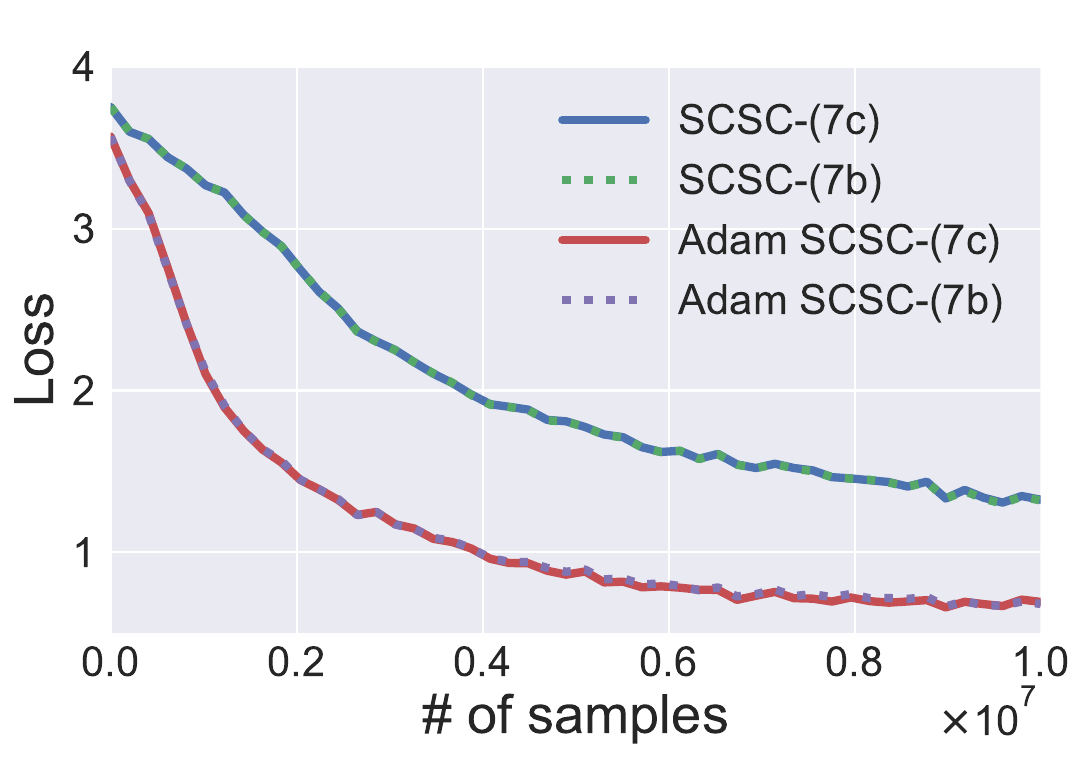}
    \hspace{-0.3cm}
    \includegraphics[width=0.42\textwidth]{./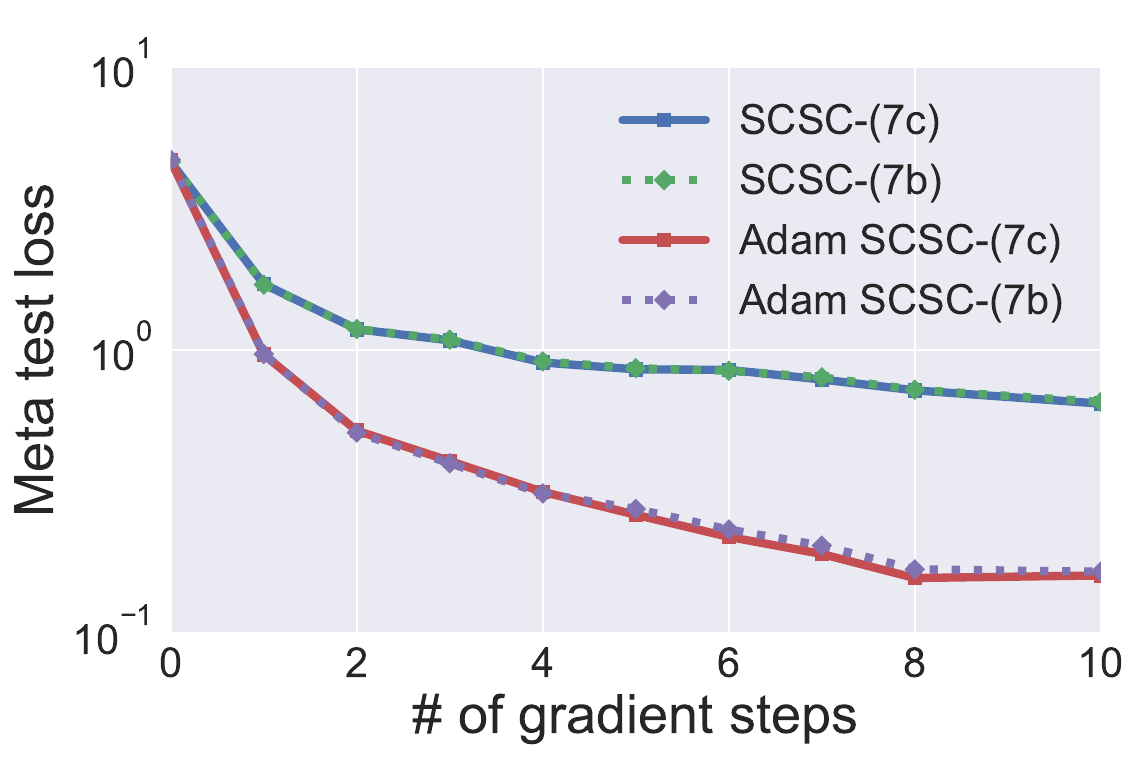}
  \vspace*{-0.5cm}
    \caption{{Comparison of two SCSC updates on the Sinewave regression task.}}
    \label{fig:maml2}
%    \vspace*{-0.4cm}
\end{figure}

\begin{figure}[t]
 \vspace*{-0.2cm}
%    \hspace{0.2cm}
\centering
    \includegraphics[width=0.4\textwidth]{./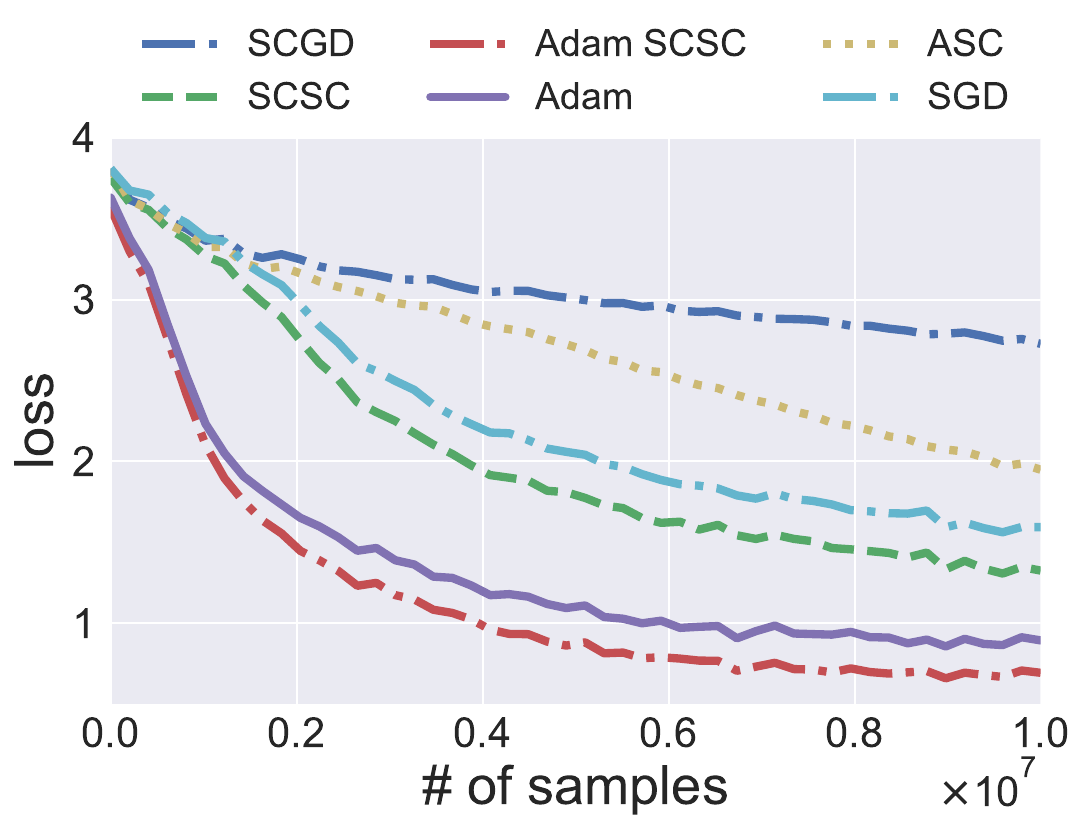}
    \includegraphics[width=0.42\textwidth]{./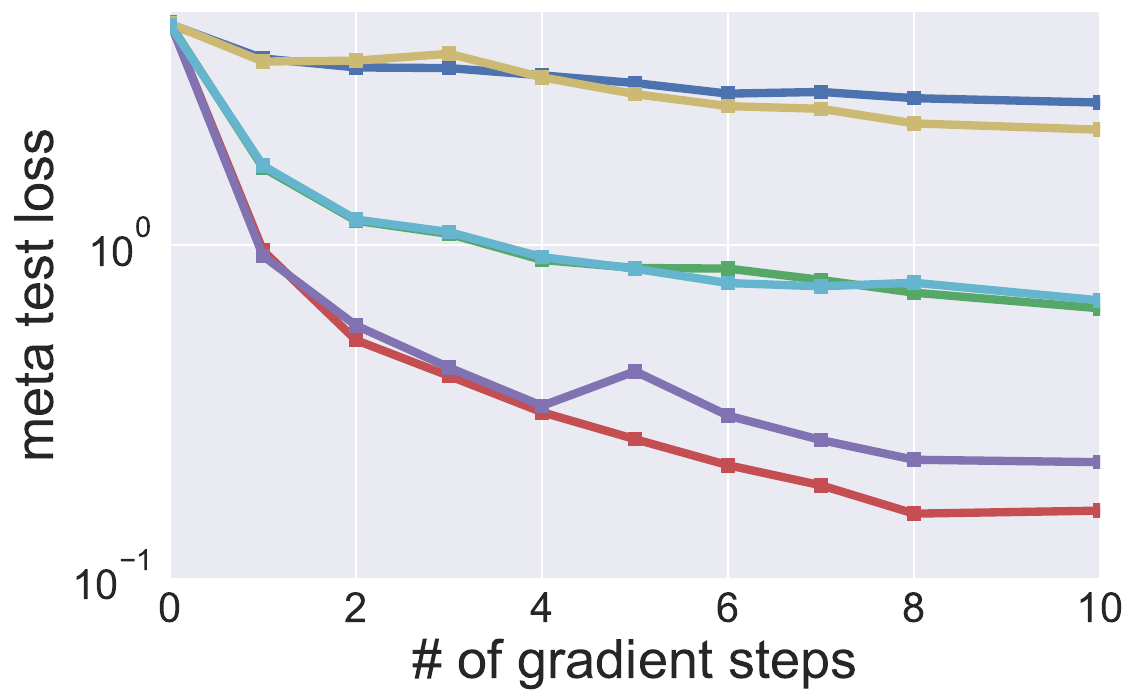}
     \vspace*{-0.2cm}
    \caption{\blue{Summary of results on the Sinewave regression task.}}
    \label{fig:maml}
        \vspace*{-0.2cm}
\end{figure}

\noindent\textbf{Benchmark algorithms.} 
In Figure \ref{fig:maml2}, we first compare the performance of SCSC and Adam SCSC under two different rules \eqref{eq.SCSC-2} and \eqref{eq.SCSC-3}. 
We then compare our SCSC and Adam SCSC with non-compositional stochastic optimization solver Adam and SGD (the common baseline for MAML), as well as compositional stochastic solver SCGD and ASC in Figure \ref{fig:maml}.

\noindent\textbf{Hyperparameter tuning.} 
We tune the hyperparameters by first following the suggested order of stepsizes from the original papers and then using a grid search for the constant. 
For SCSC and Adam SCSC, we use stepsizes $\alpha, \beta_k=0.8$. For Adam and SGD, we use $\alpha$. For SCGD and ASC, we use stepsizes $\alpha_k=\alpha k^{-3/4}, \beta_k=k^{-1/2}$ and $\alpha_k=\alpha k^{-5/9}$ and $\beta_k=k^{-4/9}$ as suggested in \cite{wang2017mp,wang2017jmlr}. 
The initial learning rate $\alpha$ is chosen from {\small$\{10^{-1}, 10^{-2}, 10^{-3}, 10^{-4}, 10^{-5}\}$} and optimized for each algorithm. 
%We fix $M=100$ and $\eta=0.01$. 
During training, we fix $M=100$ and we sample 10 data from each task to evaluate the inner function $g(\bbtheta)$, and use another 10 data to evaluate $f(\bby)$. The MAML adaptation stepsize in \eqref{opt1} is $\alpha=0.01$. 
%In Figures \ref{fig:maml2} and \ref{fig:maml}, at each evaluation point of test loss, we sample 100 data to test the performance of each algorithm on these trained tasks. We also sample 100 unseen tasks to test the fast adaptation of the meta initialization parameter trained on the 100 tasks. For each unseen task, we start with the learned meta initialization and perform 10-step SGD with 10 data per step. All the reported test results averaged over 10 runs. 
%The meta test loss is shown in Figure \ref{fig:maml}. 
%In terms of training loss, Adam SCSC again achieves the best performance, and SCSC outperforms the popular SCGD and ASC methods. In the meta test, while all algorithms reduce the test loss after several steps of adaptation, Adam SCSC achieves the fastest adaptation, and SCSC also has competitive performance.  

We compare the performance of SCSC and Adam SCSC under two different updates \eqref{eq.SCSC-2} and \eqref{eq.SCSC-3} in Figure \ref{fig:maml2}. Both \eqref{eq.SCSC-2} and \eqref{eq.SCSC-3} can guarantee that the new approach achieves the same convergence rate ${\cal O}(k^{-\frac{1}{2}})$, but \eqref{eq.SCSC-3} requires one more function evaluation than \eqref{eq.SCSC-2} at the old iterate $\bbtheta^{k-1}$. 
\blue{In terms of both the number of samples and number of gradients, the two update rules have very close performance, and the two lines are almost overlapping. Therefore, in the remaining tests, we will only plot SCSC with \eqref{eq.SCSC-3}.}  

%We consider the sinusoidal regression tasks as that in \cite{finn2017icml}. 
%Each task is to regress from the input to the output of a sine wave $s(x;a,\varphi)=a\sin(x+\varphi)$, where the amplitude $a$ and the phase $\varphi$ of the sinusoid vary across tasks.
%We sample the amplitude $a$ uniformly from ${\cal U}([0.1, 5])$ and the phase $\varphi$ uniformly from ${\cal U}([0,2\pi])$. During training, $x$ is sampled uniformly from ${\cal U}([-5,5])$ and $s(x;a,\varphi)$ is observed. We use a neural network with 2 hidden layers with weights $\bbtheta$ as the regressor $\hat{s}(x;\bbtheta)$ and use the mean square error $\mathbb{E}_x[\|\hat{s}(x;\bbtheta)-s(x;a,\varphi)\|^2]$. 
%%When the number of tasks is fixed, MAML for sinewave regression can be formulated as \eqref{opt1}, 
%%\begin{align*}
%%    \min\limits_{\bbtheta\in\mathbb{R}^d}\frac{1}{M}\sum\limits_{m=1}^M{\cal L}_m(\bbtheta-\eta\nabla{\cal L}_m(\bbtheta))
%%\end{align*}
%In this test, we compare our SCSC and Adam SCSC with non-compositional stochastic solver Adam and SGD (common baseline for MAML), as well as compositional stochastic solver SCGD and ASC. 

\begin{figure}[t]
\centering
    \includegraphics[width=0.4\textwidth]{./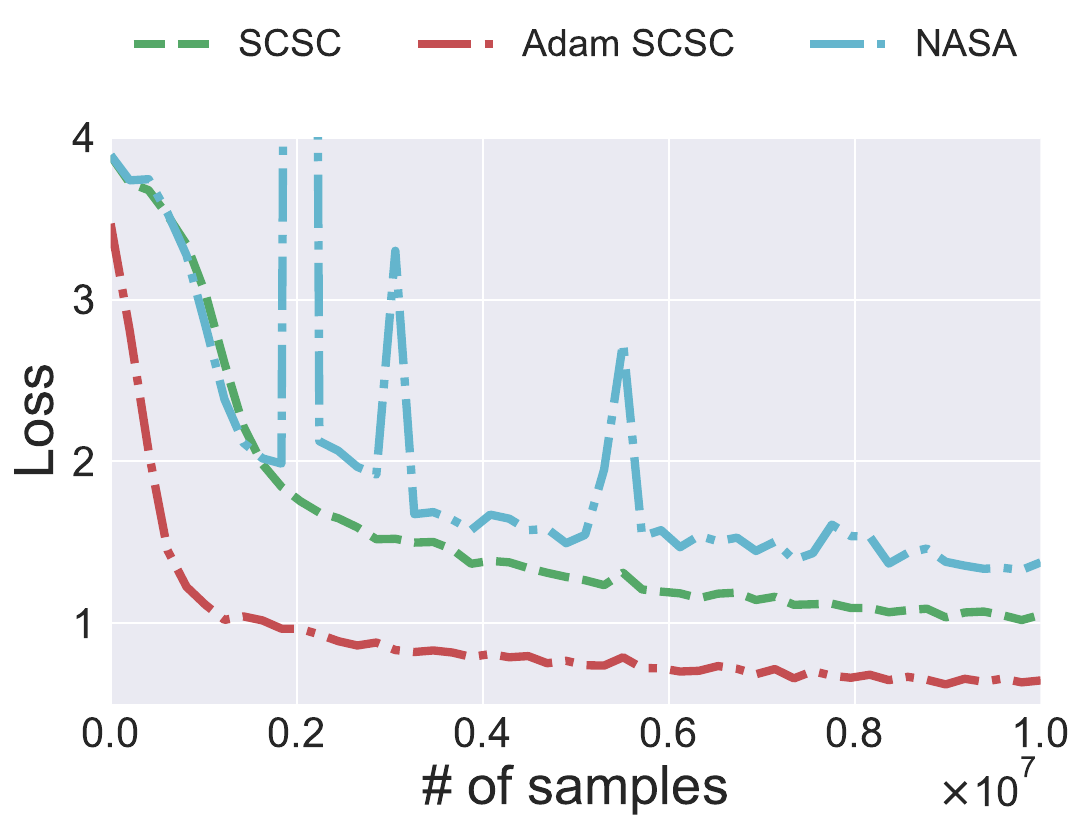}
    \includegraphics[width=0.42\textwidth]{./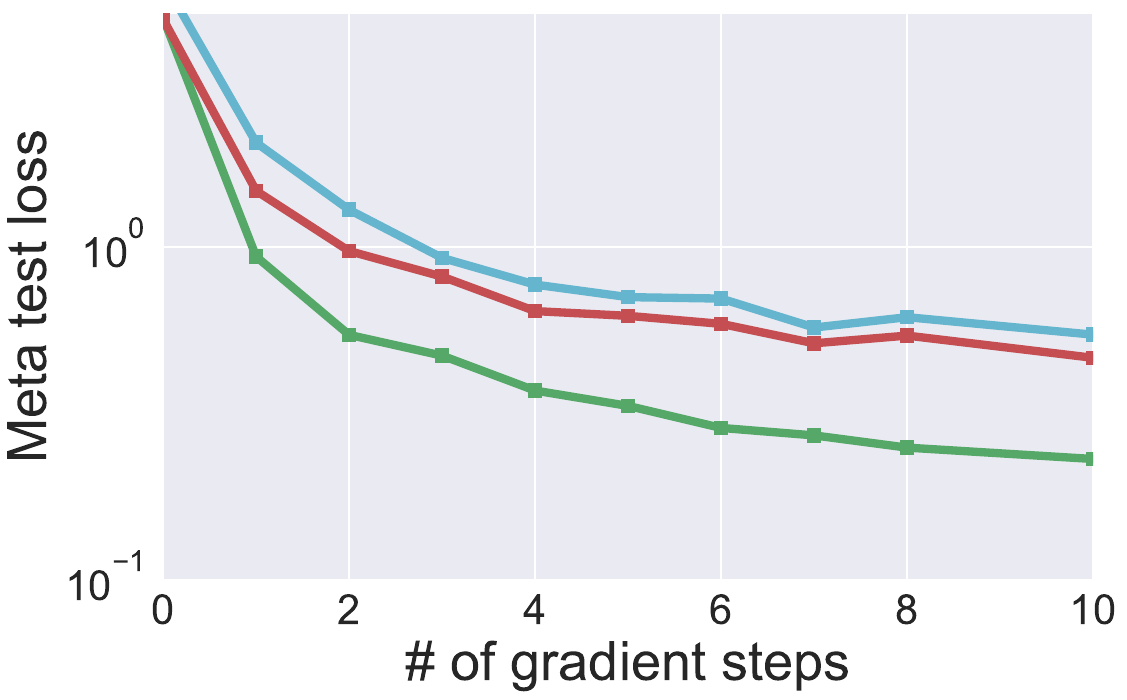}
     \vspace*{-0.2cm}
    \caption{\blue{Results on the Sinewave regression task (ELU activation functions).}}
    \label{fig:maml3}
        \vspace*{-0.2cm}
\end{figure}
 
%During training, we fix $M=100$ and sample 10 data from each task to evaluate the inner function $g(\bbtheta)$, and use another 10 data to evaluate $f(\bby)$. 
In Figure \ref{fig:maml}, at each evaluation point of test loss, we sample 100 data to test the performance of each algorithm on these trained tasks. We also sample 100 unseen tasks to test the adaptation of the meta parameter learned on {\small$M=100$} tasks. For each unseen task, we start with the learned initialization and perform 10-step SGD. 
%The meta test loss is shown in Figure \ref{fig:maml}. 
As shown in Figure \ref{fig:maml}, in terms of training loss, Adam SCSC again achieves the best performance, and SCSC outperforms the popular SCGD and ASC methods. In the meta test, while all algorithms reduce the test loss after several steps of adaptation, Adam SCSC achieves the fastest adaptation, and SCSC also has competitive performance.  
We have also simulated NASA \cite{ghadimi2020jopt} in this MAML task, but, partially due to the nonsmoothness of the objective function, we have observed the instability of the NASA algorithm. Hence, we compare SCSC and NASA under a smoothed MAML objective function using a smooth activation function ELU; see the results averaged over 5 random seeds in Figure \ref{fig:maml3}. In this smoothed setting, Adam SCSC again has the best performance. Albeit some oscillation, NASA has performance close to SCSC.

%We compare SCSC and Adam SCSC with SCGD\cite{wang2017mp}, VRSC-PG \cite{huo2018accelerated} and Nested SPIDER \cite{zhang2019nips}. 
%For linear {\small$g(\bbtheta; \bbr):=[\bbtheta, \bbr_j^{\top}\bbtheta]^{\top}$} here, SCSC is equivalent to ASC \cite{wang2017jmlr}. 
%For both inner and outer functions, we use the batch size 100. 
%Figure \ref{fig:portfolio} shows the results averaged over 50 runs on two benchmark datasets: \emph{Industrial-49} and \emph{100 Book-to-Market}. On both datasets, Adam SCSC achieves the best performance, and SCSC outperforms several popular alternatives.

\section{Conclusions}
\label{sec.cons}

%Stochastic compositional optimization is gaining popularity in applications such as reinforcement learning and meta learning. 
This paper presents a new method termed SCSC for solving the class of stochastic compositional optimization problems. 
SCSC runs in a single-time scale with a single loop, uses a fixed batch size. Remarkably, it converges at the same rate as the SGD method for non-compositional stochastic optimization. 
This is achieved by making a careful improvement to a popular stochastic compositional gradient method. 
Future research can be pursued in the following two dimensions: i) improving performance of SCSC by leveraging techniques such as decentralization, communication compression and robustness to asynchrony; and, ii) broadening the applicability of SCSC in other machine learning and signal processing applications.

%\section*{Acknowledgement}
%We thank the anonymous reviewers for providing careful reviews and constructive suggestions.
%
% The work of T. Chen was partially supported by the RPI-IBM Artificial Intelligence Research Collaboration (AIRC). The work of Y. Sun was partially supported by ONR Grant N000141712162 and AFOSR MURI FA9550-18-1-0502.

%{
%\balance

\clearpage
%\small
%\onecolumn
%\appendix
%\begin{center}
%{\Large \bf Supplementary Document for}
%\end{center}
%\vspace{-0.5cm}
%
%\begin{center}
%{\Large \bf ``Solving Stochastic Compositional Optimization is Nearly\\ as Easy as Solving Stochastic Optimization"}
%\end{center}

%In this supplementary document, we present the missing proofs of the lemmas and theorems in the main submission document. The content of this supplementary document is summarized as follows. 

%\vspace{-1cm}
%\tableofcontents
%\addcontentsline{toc}{section}{} % Add the appendix text to the document TOC
%\part{} % Start the appendix part
%\parttoc % Insert the appendix TOC

% \vspace{0.5cm}
%%%%%%%%%%%%%%%%%%%%%%%%%%%%%%%%%%%%%%%%%%%%%%%%%%%%%%%%%%%%%%%%%%%%%%%%%%%%%%%%%%%%%%%%%%

\section{Proofs of main results}
In this section, we present the proofs of the theorems in Section \ref{sec.ic-ana} and the proofs of the multi-level case in the supplementary document. 

\subsection{Proof of Theorem \ref{theorem1}}

\subsubsection{Proof of Lemma \ref{lemma2} under update \eqref{eq.SCSC-3}}
From the update \eqref{eq.SCSC-3}, we have that
\begin{align}\label{eq.pflemma2-1}
\bby^{k+1}-g(\bbtheta^k)&=(1-\beta_k)(\bby^k-g(\bbtheta^{k-1})) +(1-\beta_k)(g(\bbtheta^{k-1})-g(\bbtheta^k))\nonumber\\
&\quad+\beta_k(g(\bbtheta^k;\phi^k)-g(\bbtheta^k)) +(1-\beta_k)(g(\bbtheta^k;\phi^k)-g(\bbtheta^{k-1};\phi^k))\nonumber\\
&=(1-\beta_k)(\bby^k-g(\bbtheta^{k-1}))+(1-\beta_k)T_1 +\beta_k T_2+(1-\beta_k)T_3
\end{align}
where we define the three terms as $T_1:=g(\bbtheta^{k-1})-g(\bbtheta^k)$, $T_2:=g(\bbtheta^k;\phi^k)-g(\bbtheta^k)$, and $T_3:=g(\bbtheta^k;\phi^k)-g(\bbtheta^{k-1};\phi^k)$.
%\begin{align*}
%    &T_1:=g(\bbtheta^{k-1})-g(\bbtheta^k)\\
%    &T_2:=g(\bbtheta^k;\phi^k)-g(\bbtheta^k)\\
%    &T_3:=g(\bbtheta^k;\phi^k)-g(\bbtheta^{k-1};\phi^k).
%\end{align*}

\blue{Conditioned on ${\cal F}^k$, taking expectation on the both sides of \eqref{eq.pflemma2-1}, we have 
\begin{align*}\label{eq.app-a-1}
\,&\EE[\|\bby^{k+1}-g(\bbtheta^k)\|^2|\mathcal F^k]\\
=\,&(1-\beta_k)^2\EE[\|(\bby^k-g(\bbtheta^{k-1}))\|^2|\mathcal F^k] +\EE\left[\|(1-\beta_k)T_1+\beta_k T_2+(1-\beta_k)T_3\|^2|\mathcal F^k\right]\\
&+2\Big\langle (1-\beta_k)(\bby^k-g(\bbtheta^{k-1})),  \EE\Big[(1-\beta_k)T_1+\beta_k T_2+(1-\beta_k)T_3|\mathcal F^k\Big]\Big\rangle.\numberthis
\end{align*}

For the second term in the RHS of \eqref{eq.app-a-1}, using the Young's inequality, we have 
\begin{align*}\label{eq.app-a-1-2}
	&\EE\left[\|(1-\beta_k)T_1+\beta_k T_2+(1-\beta_k)T_3\|^2|\mathcal F^k\right]\\
\leq &  2(1-\beta_k)^2\EE\left[ \|T_1+T_3\|^2|\mathcal F^k\right]+2 \beta_k^2\EE\left[\|T_2\|^2|\mathcal F^k\right]\\
= & 2(1-\beta_k)^2\EE[\|T_1\|^2\mid{\cal F}^k]+2\beta_k^2\EE[\|T_2\|^2\mid{\cal F}^k]\\
&+4 (1-\beta_k)^2\left\langle T_1,\EE[T_3\mid{\cal F}^k]\right\rangle+2(1-\beta_k)^2\EE\left[\|T_3\|^2|\mathcal F^k\right].\numberthis
\end{align*}

Using Assumptions 3 and 4, from \eqref{eq.app-a-1-2}, we have
\begin{align*}\label{eq.app-a-2}
	&\EE\left[\|(1-\beta_k)T_1+\beta_k T_2+(1-\beta_k)T_3\|^2|\mathcal F^k\right]\\
\leq &  -2(1-\beta_k)^2\EE\left[\|g(\bbtheta^k)-g(\bbtheta^{k-1})\|^2|\mathcal F^k\right] +2(1-\beta_k)^2\EE\left[\|g(\bbtheta^k;\phi^k)-g(\bbtheta^{k-1};\phi^k)\|^2|\mathcal F^k\right]+2\beta_k^2V_g^2\\
\leq &  2(1-\beta_k)^2C_g^2\|\bbtheta^k-\bbtheta^{k-1}\|^2+2\beta_k^2V_g^2\numberthis
\end{align*}
where the second inequality follows from Assumption 2. }

For the third term in the RHS of \eqref{eq.app-a-1}, conditioned on ${\cal F}^k$, taking expectation over $\phi^k$, we have
\begin{align}\label{eq.app-a-3}
\EE\left[(1-\beta_k)T_1+\beta_k T_2+(1-\beta_k)T_3|\mathcal F^k\right]=\mathbf{0}.
\end{align}
Plugging \eqref{eq.app-a-2} and \eqref{eq.app-a-3} into \eqref{eq.app-a-1}, the proof is complete.

\subsubsection{Proof of Lemma \ref{lemma2} under update \eqref{eq.SCSC-2}}
 \begin{lemma}[Tracking error under \eqref{eq.SCSC-2}]\label{lemma1-2}
Suppose that Assumptions 1-4 hold, and $\bby^{k+1}$ is generated by running iteration \eqref{eq.SCSC} given $\bbtheta^k$. Then the variance of $\bby^{k+1}$ satisfies 
\begin{align}\label{eq.lemma1-2}
 \EE\left[\|g(\bbtheta^k)-\bby^{k+1}\|^2\mid{\cal F}^k\right] 
\leq & (1-\beta_k)\|\bby^{k-1}-g(\bbtheta^k)\|^2+4(1-\beta_k)^2C_g^2\|\bbtheta^k-\bbtheta^{k-1}\|^2
\nonumber\\
&+2\beta_k^2V_g^2+\frac{(1-\beta_k)^2L_g^2}{4\beta_k}\|\bbtheta^k-\bbtheta^{k-1}\|^4.
\end{align}
\end{lemma}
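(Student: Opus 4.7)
\textbf{Proof plan for Lemma \ref{lemma1-2}.} The plan is to mirror the Option 1 proof almost verbatim, except at the one place where it must break: the ``centering'' of the correction term no longer produces zero conditional mean. Starting from update \eqref{eq.SCSC-2}, I would write
\begin{equation*}
\bby^{k+1} - g(\bbtheta^k) \;=\; (1-\beta_k)\bigl(\bby^k - g(\bbtheta^{k-1})\bigr) \,+\, (1-\beta_k)U_1 \,+\, (1-\beta_k)U_3 \,+\, \beta_k U_2,
\end{equation*}
where $U_1 := g(\bbtheta^{k-1}) - g(\bbtheta^k)$ is $\mathcal{F}^k$-measurable, $U_2 := g(\bbtheta^k;\phi^k) - g(\bbtheta^k)$ has zero conditional mean, and $U_3 := \nabla g(\bbtheta^k;\phi^k)(\bbtheta^k - \bbtheta^{k-1})$ has conditional mean $\nabla g(\bbtheta^k)(\bbtheta^k - \bbtheta^{k-1})$. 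In Option 1 the deterministic term $U_1$ is cancelled exactly by $\EE[T_3\mid\mathcal{F}^k]$; here the residual is $R := U_1 + \nabla g(\bbtheta^k)(\bbtheta^k - \bbtheta^{k-1})$, the first-order Taylor remainder of $g$ at $\bbtheta^k$, which by Assumption 1 satisfies $\|R\|\le \tfrac{L_g}{2}\|\bbtheta^k-\bbtheta^{k-1}\|^2$. This extra bias is precisely what forces the additional $\|\bbtheta^k-\bbtheta^{k-1}\|^4$ piece in the bound.

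Next I would invoke the variance-mean decomposition $\EE[\|X\|^2\mid\mathcal{F}^k] = \|\EE[X\mid\mathcal{F}^k]\|^2 + \EE[\|X-\EE[X\mid\mathcal{F}^k]\|^2\mid\mathcal{F}^k]$ with $X = \bby^{k+1} - g(\bbtheta^k)$. The zero-mean part is handled exactly as in the Option 1 proof: Assumption 4 bounds $\EE[\|U_2\|^2\mid\mathcal{F}^k]\le V_g^2$, while the almost-sure (or second-moment) bound $\|\nabla g(\bbtheta^k;\phi^k)\|\le C_g$ from Assumption 2 gives $\EE[\|U_3 - \EE[U_3\mid\mathcal{F}^k]\|^2\mid\mathcal{F}^k]\le C_g^2\|\bbtheta^k-\bbtheta^{k-1}\|^2$. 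Combined with a $\|a+b\|^2\le 2\|a\|^2+2\|b\|^2$ split (and noting that the cross term between $U_2$ and $U_3$ vanishes after a second $2\|\cdot\|^2$ step), this contributes precisely the $4(1-\beta_k)^2 C_g^2\|\bbtheta^k-\bbtheta^{k-1}\|^2 + 2\beta_k^2 V_g^2$ piece appearing in \eqref{eq.lemma1-2}.

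The remaining and key step is the squared mean $\|(1-\beta_k)(\bby^k - g(\bbtheta^{k-1})) + (1-\beta_k)R\|^2$. To recover the claimed $(1-\beta_k)$ contraction on the tracking error — not the $(1-\beta_k)^2$ of Option 1 — I would apply Young's inequality with weight $\eta = \beta_k/(1-\beta_k)$, which collapses $(1-\beta_k)^2(1+\eta)$ down to $(1-\beta_k)$ while pushing a $(1-\beta_k)^2/\beta_k$ factor onto $\|R\|^2$. Substituting $\|R\|^2\le \tfrac{L_g^2}{4}\|\bbtheta^k-\bbtheta^{k-1}\|^4$ then yields a term proportional to $\|\bbtheta^k-\bbtheta^{k-1}\|^4$, matching \eqref{eq.lemma1-2} up to the $L$-style constant; adding everything up completes the proof.

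The main obstacle is item (iii): unlike Option 1, the Jacobian-vector surrogate introduces an irreducible second-order bias $R$ that must be absorbed through Young's rather than cancelled. This is also the reason the paper remarks that a stronger moment assumption is needed — the $\|\bbtheta^k-\bbtheta^{k-1}\|^4$ tail makes the outer descent argument require control of fourth-order displacement moments, whereas under Option 1 only second-order moments appear. Everything else (the decomposition, the noise bookkeeping, and the Taylor remainder bound via $L_g$-smoothness) is a direct port of the Option 1 template already proved in the excerpt.
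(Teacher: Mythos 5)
Your proposal is correct and follows essentially the same route as the paper's proof: the same three-term decomposition of $\bby^{k+1}-g(\bbtheta^k)$, the same identification of the first-order Taylor remainder $R$ with $\|R\|\le \tfrac{L_g}{2}\|\bbtheta^k-\bbtheta^{k-1}\|^2$ as the new bias source, the same bounds on the noise terms, and your bias--variance split plus Young's $(1+\eta)$ inequality is just a reorganization of the paper's explicit cross-term estimate via Cauchy--Schwarz and Young, both collapsing $(1-\beta_k)^2(1+\eta)$ to $(1-\beta_k)$. The only (harmless) discrepancy is that your Young step correctly carries a $1/\beta_k$ factor on the quartic term, i.e.\ $\tfrac{(1-\beta_k)^2L_g^2}{4\beta_k}\|\bbtheta^k-\bbtheta^{k-1}\|^4$, whereas the paper's stated constant drops it; since this term is higher order ($\|\bbtheta^k-\bbtheta^{k-1}\|^4={\cal O}(\alpha_k^4)$ with $\beta_k\propto\alpha_k$), neither version affects the downstream analysis.
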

\begin{proof}
For \eqref{eq.SCSC-2}, using the fact that $\nabla g(\bbtheta)$ is $L_g$-Lipschitz continuous in Assumption 1, we have
\begin{align}\label{eq.pflemma2-2-1}
 \bby^{k+1}-g(\bbtheta^k) 
 = & (1-\beta_k)(\bby^k-g(\bbtheta^{k-1}))+(1-\beta_k)(g(\bbtheta^k)-g(\bbtheta^{k-1}))\nonumber\\
& +\beta_k(g(\bbtheta^k;\phi^k)-g(\bbtheta^k)) +(1-\beta_k)\nabla g(\bbtheta^{k-1};\phi^k)(\bbtheta^k-\bbtheta^{k-1})\nonumber\\
=& (1-\beta_k)(\bby^k-g(\bbtheta^{k-1})+(1-\beta_k)T_1+\beta_k T_2+(1-\beta_k)T_3 
\end{align}
where we define the terms as $T_1:=g(\bbtheta^{k-1})-g(\bbtheta^k)$, $T_2:=g(\bbtheta^k;\phi^k)-g(\bbtheta^k)$, and $T_3:=\nabla g(\bbtheta^{k-1};\phi^k)(\bbtheta^k-\bbtheta^{k-1})$.
%\begin{align*}
%    &T_1:=g(\bbtheta^{k-1})-g(\bbtheta^k)\\
%    &T_2:=g(\bbtheta^k;\phi^k)-g(\bbtheta^k)\\
%    &T_3:=\nabla g(\bbtheta^{k-1};\phi^k)(\bbtheta^k-\bbtheta^{k-1}).
%\end{align*}

Therefore, conditioned on ${\cal F}^k$, taking expectation on both sides of \eqref{eq.pflemma2-2-1} over $\phi^k$, we have
 \begin{align*}\label{eq.app-a-4}
    &\, \EE[\|\bby^{k+1}-g(\bbtheta^k)\|^2\mid{\cal F}^k]\\
   =&\, (1-\beta_k)^2\|\bby^k-g(\bbtheta^{k-1})\|^2+  2\Big\langle\!(1-\beta_k)(\bby^k\!-\!g(\bbtheta^{k-1})),  \EE\big[(1-\beta_k)T_1\!+\!\beta_kT_2\!+\!(1-\beta_k)T_3\!\mid\!{\cal F}^k\big]\Big\rangle\\
      &+\EE[\|(1-\beta_k)T_1+\beta_kT_2+(1-\beta_k)T_3\|^2\mid{\cal F}^k].\numberthis
\end{align*}

 For the second term in the RHS of \eqref{eq.app-a-4}, using the Cauchy-Schwartz inequality, we have 
 \begin{align*}
&\left\langle\!(1-\beta_k)(\bby^k-g(\bbtheta^{k-1})), \EE\big[(1-\beta_k)T_1\!+\!\beta_kT_2\!+\!(1-\beta_k)T_3\!\mid\!{\cal F}^k\big]\right\rangle\\
&\leq (1-\beta_k) \big\|\bby^k-g(\bbtheta^{k-1})\big\| \Big\|\EE[(1-\beta_k)T_1+\beta_k T_2+(1-\beta_k)T_3\mid{\cal F}^k]\Big\|.
% &\leq \frac{(1-\beta_k)^2L_g}{2}\|\bby^k-g(\bbtheta^{k-1})\|\|\bbtheta^k-\bbtheta^{k-1}\|^2.
\end{align*} 

Conditioned on ${\cal F}^k$, taking expectation over $\phi^k$, we have
{\small\begin{align}\label{eqn:1}
    &\, \left\|\EE\left[(1-\beta_k)T_1+\beta_k T_2+(1-\beta_k)T_3\mid{\cal F}^k\right]\right\|\nonumber\\
    =&\, (1-\beta_k)\left\|g(\bbtheta^{k-1})-g(\bbtheta^k)+\nabla g(\bbtheta^{k-1})(\bbtheta^k-\bbtheta^{k-1})\right\|\nonumber\\
    =&\, (1-\beta_k)\Big\|\int_0^1-\nabla g(\bbtheta^{k-1}+t(\bbtheta^k-\bbtheta^{k-1}))(\bbtheta^k-\bbtheta^{k-1})dt +\nabla g(\bbtheta^{k-1})(\bbtheta^k-\bbtheta^{k-1})\Big\|\nonumber\\
    \leq& (1-\beta_k)\!\!\int_0^1\!\left\|\nabla g(\bbtheta^{k-1})\!-\!\nabla g(\bbtheta^{k-1}\!\!+\!t(\bbtheta^k-\bbtheta^{k-1}))\right\|\!\|\bbtheta^k-\bbtheta^{k-1}\|dt\nonumber\\
    \leq&\, (1-\beta_k)\int_0^1L_gt\|\bbtheta^{k}-\bbtheta^{k-1}\|^2 = \frac{(1-\beta_k)L_g}{2}\|\bbtheta^k-\bbtheta^{k-1}\|^2.
\end{align} 

 For the third term in the RHS of \eqref{eq.app-a-4}, following the steps of \eqref{eq.app-a-1-2}-\eqref{eq.app-a-2}, we have
 \begin{align*}\label{eq.app-a-5}
 \EE[\|(1-\beta_k)T_1+\beta_kT_2+(1-\beta_k)T_3\|^2\mid{\cal F}^k] 
 \leq  4(1-\beta_k)^2C_g^2\|\bbtheta^k-\bbtheta^{k-1}\|^2+2\beta_k^2V_g^2.\numberthis
%  +2(1-\beta_k)^2\EE[\|T_3\|^2\mid{\cal F}^k]\\
%   \leq & 2(1-\beta_k)^2C_g^2\|\bbtheta^k-\bbtheta^{k-1}\|^2+2\beta_k^2V_g^2\\
%   &+2(1-\beta_k)^2\EE[\|\nabla g(\bbtheta^{k-1};\phi^k) \|^2\mid{\cal F}^k] \| \bbtheta^k-\bbtheta^{k-1} \|^2
 \end{align*}
 
 Plugging \eqref{eqn:1} and \eqref{eq.app-a-5} into \eqref{eq.app-a-4}, we have
 \begin{align*}
    &\, \EE[\|\bby^{k+1}-g(\bbtheta^k)\|^2\mid{\cal F}^k]\\
\leq &\, (1-\beta_k)^2\|\bby^k-g(\bbtheta^{k-1})\|^2+4(1-\beta_k)^2C_g^2\|\bbtheta^k-\bbtheta^{k-1}\|^2\\
   &+2\beta_k^2V_g^2+(1-\beta_k)^2L_g\|\bby^k-g(\bbtheta^{k-1})\|\|\bbtheta^k-\bbtheta^{k-1}\|^2\\
 \stackrel{(a)}{\leq}&\, (1-\beta_k)^2\|\bby^k-g(\bbtheta^{k-1})\|^2+4(1-\beta_k)^2C_g^2\|\bbtheta^k-\bbtheta^{k-1}\|^2+2\beta_k^2V_g^2\\
%   &\,+2(1-\beta_k)^2\EE[\|T_1\|^2\mid{\cal F}^k]+2\beta_k^2\EE[\|T_2\|^2\mid{\cal F}^k]\\
%   &\, +4\beta_k(1-\beta_k)\left\langle T_1, \EE[T_2\mid{\cal F}^k]\right\rangle+2(1-\beta_k)^2\EE[\|T_3\|^2\mid{\cal F}^k]\\
   &\, +(1-\beta_k)^2\beta_k\|\bby^k-g(\bbtheta^{k-1})\|^2+\frac{(1-\beta_k)^2L_g^2}{4\beta_k}\|\bbtheta^k-\bbtheta^{k-1}\|^4\\
\stackrel{(b)}{\leq}&\, (1-\beta_k)^2(1+\beta_k)\|\bby^k-g(\bbtheta^{k-1})\|^2+4(1-\beta_k)^2C_g^2\|\bbtheta^k-\bbtheta^{k-1}\|^2 +2\beta_k^2V_g^2+\frac{(1-\beta_k)^2L_g^2}{4\beta_k}\|\bbtheta^k-\bbtheta^{k-1}\|^4
\end{align*}}
where (a) uses the Young's inequality, and (b) follows from $(1-\beta_k)^2(1+\beta_k)\leq 1-\beta_k$.
Hence, the proof is complete.  
\end{proof}

Compared with the tracking variance in Lemma \ref{lemma2} under \eqref{eq.SCSC-3}, Lemma \ref{lemma1-2} under \eqref{eq.SCSC-2} has an additional term $\frac{(1-\beta_k)^2L_g^2}{4\beta_k}\|\bbtheta^k-\bbtheta^{k-1}\|^4$.  
In this case, under a stronger version of Assumption 2' (e.g., bounded fourth moments), this term is {\small${\cal O}\left(\alpha_k^4/\beta_k\right)$}, which will be dominated by second and the third terms in the RHS of \eqref{eq.lemma1-2} since both of them are {\small${\cal O}\left(\alpha_k^2\right)$}. 

Once we have established this, the remaining proof of SCSC with \eqref{eq.SCSC-2} follows the same line as that of SCSC with \eqref{eq.SCSC-3}. For brevity, we only present the proof under Lemma \ref{lemma2}, and that under Lemma \ref{lemma1-2} follows similarly.

\noindent\textbf{Assumption 2'.}
\emph{The stochastic gradients of $f$ and $g$ are bounded in expectation, that is {\small$\mathbb{E}\left[\|\nabla g(\bbtheta;\phi)\|^4\right]\leq C_g^4$ and $\mathbb{E}\left[\|\nabla f(\bby;\xi)\|^4\right]\leq C_f^4$}.}

\subsubsection{Remaining proof}
Using the $L$-smoothness of $F(\bbtheta)$ in \eqref{eq.smooth_const}, we have
 \begin{align*}
  F(\bbtheta^{k+1})-F(\bbtheta^k) 
    &\leq \dotp{\nabla F(\bbtheta^k),\bbtheta^{k+1}-\bbtheta^k}+\frac{L}{2}\|\bbtheta^{k+1}-\bbtheta^k\|^2\\
    &= -\alpha_k\dotp{\nabla F(\bbtheta^k),\nabla g(\bbtheta^k;\phi^k)\nabla f(\bby^{k+1};\xi^k)}+\frac{L}{2}\|\bbtheta^{k+1}-\bbtheta^k\|^2\\
    &= -\alpha_k\|\nabla F(\bbtheta^k)\|^2+\frac{L}{2}\|\bbtheta^{k+1}-\bbtheta^k\|^2\\
    &\ \ \ +\alpha_k\dotp{\nabla F(\bbtheta^k),\nabla g(\bbtheta^k)\nabla f(g(\bbtheta^k))-\nabla g(\bbtheta^k;\phi^k)\nabla f(\bby^{k+1};\xi^k)}
\end{align*} 
where the first equality follows from \eqref{eq.SCSC-1} and the last equality uses $\nabla F(\bbtheta^k)=\nabla g(\bbtheta^k)\nabla f(g(\bbtheta^k))$.

Conditioned on $\mathcal F^k$, taking expectation over $\phi^k$ and $\xi^k$ on both sides, we have
{\small\begin{align}\label{eq.pf37}
    &\EE\left[F(\bbtheta^{k+1})|\mathcal F^k\right]-F(\bbtheta^k)\nonumber\\
 \stackrel{(a)}{\leq} & -\alpha_k\|\nabla F(\bbtheta^k)\|^2+\frac{L}{2}\EE\left[\|\bbtheta^{k+1}-\bbtheta^k\|^2|\mathcal F^k\right]\nonumber\\
    &\!+\!\alpha_k\EE\left[\!\dotp{\nabla F(\bbtheta^k),\nabla g(\bbtheta^k;\phi^k)(\nabla f(g(\bbtheta^k);\xi^k)\!-\!\nabla f(\bby^{k+1};\xi^k)}|\mathcal F^k\right]\nonumber\\
    \stackrel{(b)}{\leq}& -\alpha_k\|\nabla F(\bbtheta^k)\|^2+\frac{L}{2}\EE[\|\bbtheta^{k+1}-\bbtheta^k\|^2|\mathcal F^k]\nonumber\\
    & +\alpha_k\left\|\nabla F(\bbtheta^k)\right\|\EE\left[\|\nabla g(\bbtheta^k;\phi^k)\|^2|\mathcal F^k\right]^{\frac{1}{2}}\EE\!\left[\|\nabla f(g(\bbtheta^k);\xi^k)\!-\!\nabla f(\bby^{k+1};\xi^k)\|^2|\mathcal F^k\right]^{\frac{1}{2}}
\end{align}}
\hspace{-0.4cm}where (a) uses $\EE[\nabla g(\bbtheta^k;\phi^k)\nabla f(g(\bbtheta^k);\xi^k)|\mathcal F^k]=\nabla g(\bbtheta^k)\nabla f(g(\bbtheta^k))$ in Assumption 3, and (b) uses the Cauchy-Schwartz inequality. 

Further expanding the RHS of \eqref{eq.pf37}, we have
\begin{align}
    &\frac{L}{2}\EE[\|\bbtheta^{k+1}-\bbtheta^k\|^2|\mathcal F^k]\leq \frac{L}{2}C_g^2C_f^2\alpha_k^2
\end{align}
which follows from Assumption 2. 

And we have
 \begin{align*}    
    &\alpha_k\left\|\nabla F(\bbtheta^k)\right\|\EE\left[\|\nabla g(\bbtheta^k;\phi^k)\|^2|\mathcal F^k\right]^{\frac{1}{2}} \EE\left[\|\nabla f(g(\bbtheta^k);\xi^k)\!-\!\nabla f(\bby^{k+1};\xi^k)\|^2|\mathcal F^k\right]^{\frac{1}{2}}\\
    \stackrel{(c)}{\leq} & \alpha_kC_gL_f\|\nabla F(\bbtheta^k)\|\EE\left[\|g(\bbtheta^k)-\bby^{k+1}\|^2|\mathcal F^k\right]^{\frac{1}{2}}\\
    \stackrel{(d)}{\leq} & \frac{\alpha_k^2}{4\beta_k}C_g^2L_f^2\|\nabla F(\bbtheta^k)\|^2+\beta_k\EE\left[\|g(\bbtheta^k)-\bby^{k+1}\|^2|\mathcal F^k\right] 
\end{align*} 
where (c) uses Assumptions 1 and 2; and (d) uses the Young's inequality. 

Therefore, we have
\begin{align}
&\EE\left[F(\bbtheta^{k+1})|\mathcal F^k\right]-F(\bbtheta^k)\nonumber\\
	\leq &\, -\alpha_k\left(1-\frac{\alpha_k}{4\beta_k}C_g^2L_f^2\right)\|\nabla F(\bbtheta^k)\|^2 +\beta_k\EE\left[\|g(\bbtheta^k)-\bby^{k+1}\|^2|\mathcal F^k\right]+\frac{L}{2}C_g^2C_f^2\alpha_k^2.
\end{align}

% Define Lyapunov function
% \begin{align*}
%   {\cal V}^k=F(\bbtheta^k)+\|g(\bbtheta^{k-1})-\bby^k\|^2.
% \end{align*}
Then with the definition of Lyapunov function in \eqref{eq.Lyap}, it follows that
\begin{align}\label{eq.thm1-1}
&    \EE[{\cal V}^{k+1}|\mathcal F^k]-{\cal V}^k\nonumber\\
    &\leq-\alpha_k\left(1-\frac{\alpha_k}{4\beta_k}C_g^2L_f^2\right)\|\nabla F(\bbtheta^k)\|^2+\frac{L}{2}C_g^2C_f^2\alpha_k^2 \\
    &\ \ \ \ +(1+\beta_k)\EE\left[\|g(\bbtheta^k)-\bby^{k+1}\|^2|\mathcal F^k\right]-\|g(\bbtheta^{k-1})-\bby^k\|^2\nonumber\\
    &\stackrel{(a)}{\leq}  -\alpha_k\left(1-\frac{\alpha_k}{4\beta_k}C_g^2L_f^2\right)\|\nabla F(\bbtheta^k)\|^2+\frac{L}{2}C_g^2C_f^2\alpha_k^2+2(1+\beta_k)\beta_k^2V_g^2 \!\nonumber\\
    &\ \ \ \ +\!\left((1+\beta_k)(1-\beta_k)^2\!-\!1\right)\|g(\bbtheta^{k-1})-\bby^k\|^2  +4(1+\beta_k)(1-\beta_k)^2C_g^4C_f^2\alpha_k^2 \nonumber\\
    &\stackrel{(b)}{\leq} -\alpha_k\left(1-\frac{\alpha_k}{4\beta_k}C_g^2L_f^2\right)\|\nabla F(\bbtheta^k)\|^2+\frac{L}{2}C_g^2C_f^2\alpha_k^2+2(1+\beta_k)\beta_k^2V_g^2+4C_g^4C_f^2\alpha_k^2 \nonumber
\end{align}
where (a) follows from Lemma \ref{lemma2}, and (b) uses that $(1+\beta_k)(1-\beta_k)^2=(1-\beta_k^2)(1-\beta_k)\leq 1$. 

Select $\alpha_k=\frac{2\beta_k}{C_g^2L_f^2}$ so that $1-\frac{\alpha_k}{4\beta_k}C_g^2L_f^2=\frac{1}{2}$, and define (with $\beta_k\in(0,1)$)
\begin{equation}
\small
    B_1:=\frac{L}{2}C_g^2C_f^2+4V_g^2+4C_g^4C_f^2\geq \frac{L}{2}C_g^2C_f^2+2(1+\beta_k)V_g^2+4C_g^4C_f^2.
\end{equation}
Taking expectation over $\mathcal F^k$ on both sides of \eqref{eq.thm1-1}, then it follows that
\begin{align}\label{eq.thm1-2}
    \EE[{\cal V}^{k+1}]\leq \EE[{\cal V}^k]-\frac{\alpha_k}{2}\EE[\|\nabla F(\bbtheta^k)\|^2]+B_1\alpha_k^2.
\end{align}
Rearranging terms, we have     
 \begin{align*}   
    \frac{\sum_{k=0}^K\alpha_k\EE[\|\nabla F(\bbtheta^k)\|^2]}{\sum_{k=0}^K\alpha_k}\leq\frac{2{\cal V}^0+2 B_1\sum_{k=0}^K\alpha_k^2}{\sum_{k=0}^K\alpha_k}.
\end{align*}
Choosing the stepsize as $\alpha_k=\frac{1}{\sqrt{K}}$ completes the proof.
% leads to 
%  \begin{align*}   
%     \frac{\sum_{k=0}^{K-1}\EE[\|\nabla F(\bbtheta^k)\|^2]}{K}\leq\frac{2{\cal V}^0+2 B_1}{\sqrt{K}}
% \end{align*}
% from which the proof is complete. 

\subsection{Proof of Theorem \ref{theorem4}}
\subsubsection{Supporting lemmas}
We first present the essential lemmas that will lead to Theorem \ref{theorem4}. 
\begin{lemma}\label{lemma3}
Under Assumption 5, the parameters $\{\bbh^k, \hat{\bbv}^k\}$ of Adam SCSC in Algorithm \ref{alg:adascg} satisfy
\begin{equation}
\|\bbh^k\|\leq C_gC_f,~~~\forall k;~~~\hat{v}_i^k\leq C_g^2C_f^2, ~~~\forall k, i.
\end{equation}
\end{lemma}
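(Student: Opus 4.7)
The proof is a direct induction argument that leverages the almost-sure boundedness supplied by Assumption 5. The plan is first to bound the one-step stochastic compositional gradient $\bm\nabla^k$ itself, and then to propagate this bound through the convex combinations defining $\bbh^k$ and $\bbv^k$, taking care to handle the element-wise max in $\hat{\bbv}^k$.

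First, I would observe that since $\bm\nabla^k := \nabla g(\bbtheta^k;\phi^k)\,\nabla f(\bby^{k+1};\xi^k)$, the Cauchy-Schwarz / operator-vector norm inequality combined with Assumption 5 yields
\begin{equation*}
\|\bm\nabla^k\| \;\leq\; \|\nabla g(\bbtheta^k;\phi^k)\|\,\|\nabla f(\bby^{k+1};\xi^k)\| \;\leq\; C_g C_f \qquad \text{almost surely}.
\end{equation*}
Consequently each coordinate satisfies $(\nabla_i^k)^2 \leq \|\bm\nabla^k\|^2 \leq C_g^2 C_f^2$ almost surely. These two observations are the only place where Assumption 5 enters.

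Next, I would prove $\|\bbh^k\| \leq C_g C_f$ by induction on $k$. The base case follows from the standard initialization $\bbh^0 = \mathbf{0}$ (or any initialization with $\|\bbh^0\| \leq C_g C_f$). For the inductive step, using \eqref{eq.adaSCGD-1} and the triangle inequality,
\begin{equation*}
\|\bbh^{k+1}\| \;=\; \|\eta_1 \bbh^k + (1-\eta_1)\bm\nabla^k\| \;\leq\; \eta_1 \|\bbh^k\| + (1-\eta_1)\|\bm\nabla^k\| \;\leq\; \eta_1 C_g C_f + (1-\eta_1)C_g C_f \;=\; C_g C_f,
\end{equation*}
since $\eta_1 \in (0,1)$. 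An analogous coordinate-wise induction handles $\hat v_i^k$: from the base $\hat v_i^0 = 0$ and assuming $\hat v_i^k \leq C_g^2 C_f^2$, the update \eqref{eq.adaSCGD-2} gives
\begin{equation*}
v_i^{k+1} \;=\; \eta_2 \hat v_i^k + (1-\eta_2)(\nabla_i^k)^2 \;\leq\; \eta_2 C_g^2 C_f^2 + (1-\eta_2) C_g^2 C_f^2 \;=\; C_g^2 C_f^2,
\end{equation*}
and then $\hat v_i^{k+1} = \max\{v_i^{k+1}, \hat v_i^k\} \leq C_g^2 C_f^2$ as well, completing the induction.

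There is essentially no obstacle here: both quantities are exponential moving averages (or monotone envelopes thereof) of vectors bounded by $C_g C_f$ almost surely, and such averages stay inside the same ball. The only thing one must remember is that $\bbv$ and $\hat\bbv$ are element-wise objects, so the bound must be established coordinate-wise rather than in norm.
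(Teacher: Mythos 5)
Your proof is correct and follows essentially the same route as the paper's: bound $\|\bm\nabla^k\|\leq C_gC_f$ via Assumption 5, then run the induction through the exponential-average update for $\bbh^k$ and the coordinate-wise max-update for $\hat v_i^k$. The only cosmetic difference is that you justify the base case via zero initialization while the paper simply asserts $\|\bbh^1\|\leq C_gC_f$; otherwise the arguments coincide.
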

\begin{proof}
Using Assumption 5, it follows that $\|\bm\nabla^k\|=\|\nabla g(\bbtheta^k;\phi^k) \nabla f(\bby^{k+1};\xi^k)\|\leq C_gC_f$.  
Therefore, from the update \eqref{eq.adaSCGD-1}, we have
\begin{align*}
\|\bbh^{k+1}\|\!\leq\! \eta_1\|\bbh^{k}\|\!+\!(1-\eta_1)\|\bm\nabla^k\|\leq \eta_1\|\bbh^{k}\|+(1-\eta_1) C_gC_f.
\end{align*}
Since $\|\bbh^{1}\|\leq C_gC_f$, by induction, we have $\|\bbh^{k+1}\|\leq C_gC_f$. 
%\begin{align*}
%\|\bbh^{k+1}\|\leq C_gC_f
%\end{align*}

Similarly, from the update \eqref{eq.adaSCGD-2}, we have
\begin{align*}
\hat v_i^{k+1}&\leq\max\{\hat v_i^k,\eta_2 \hat v_i^k+(1-\eta_2)(\nabla_i^k)^2\}\\
&\leq \max\{\hat v_i^k,\eta_2 \hat v_i^k+(1-\eta_2)C_g^2C_f^2\}. 
\end{align*}
Since $v_i^1=\hat v_i^1\leq C_g^2C_f^2$, by induction, $\hat v_i^{k+1}\leq C_g^2C_f^2$. 
\end{proof}

\begin{lemma}\label{lemme4}
Under Assumption 5, the iterates $\{\bbtheta^k\}$ of Adam SCSC in Algorithm \ref{alg:adascg} satisfy
\begin{equation}
    \left\|\bbtheta^{k+1}-\bbtheta^k\right\|^2\leq \alpha_k^2d(1-\eta_2)^{-1}(1-\gamma)^{-1}
\end{equation}
where $d$ is the dimension of $\bbtheta$, $\eta_1<\sqrt{\eta_2}<1$, and $\gamma:=\eta_1^2/\eta_2$.
\end{lemma}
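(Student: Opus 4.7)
The plan is to bound the per-coordinate ratio $(h^{k+1}_i)^2/(\epsilon+\hat v^{k+1}_i)$ by a constant independent of the iterates, then sum over the $d$ coordinates. Concretely, from the update rule \eqref{eq.adaSCGD-3} we immediately have
\begin{equation*}
\|\bbtheta^{k+1}-\bbtheta^k\|^2 \;=\; \alpha_k^2 \sum_{i=1}^{d} \frac{(h_i^{k+1})^2}{\epsilon+\hat v_i^{k+1}} \;\le\; \alpha_k^2 \sum_{i=1}^{d} \frac{(h_i^{k+1})^2}{\hat v_i^{k+1}},
\end{equation*}
so the task reduces to showing that the last summand is at most $(1-\eta_2)^{-1}(1-\gamma)^{-1}$ uniformly in $i$ and $k$.

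Next I would unroll the first-moment recursion \eqref{eq.adaSCGD-1} starting from $\bbh^0=\mathbf{0}$ to obtain $h_i^{k+1}=(1-\eta_1)\sum_{j=0}^{k}\eta_1^{k-j}\nabla_i^j$, and similarly $v_i^{k+1}=(1-\eta_2)\sum_{j=0}^{k}\eta_2^{k-j}(\nabla_i^j)^2$, with $\hat v_i^{k+1}\ge v_i^{k+1}$ by the max-definition. The key step is a Cauchy--Schwarz split of the weights $\eta_1^{k-j}$ designed to produce $\eta_2^{k-j}$ in the square root with the residual being a geometric series in $\gamma=\eta_1^2/\eta_2$. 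Specifically, writing $\eta_1^{k-j}=\gamma^{(k-j)/2}\cdot\eta_2^{(k-j)/2}$ and applying Cauchy--Schwarz gives
\begin{equation*}
\Bigl(\sum_{j=0}^{k}\eta_1^{k-j}\nabla_i^j\Bigr)^{2}
\le \Bigl(\sum_{j=0}^{k}\gamma^{k-j}\Bigr)\Bigl(\sum_{j=0}^{k}\eta_2^{k-j}(\nabla_i^j)^2\Bigr)
\le \frac{1}{1-\gamma}\cdot\frac{v_i^{k+1}}{1-\eta_2}.
\end{equation*}
Multiplying by $(1-\eta_1)^2\le 1$ and using $v_i^{k+1}\le \hat v_i^{k+1}$ yields $(h_i^{k+1})^2\le \hat v_i^{k+1}/((1-\gamma)(1-\eta_2))$. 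Summing over $i=1,\dots,d$ and folding the factor $\alpha_k^2$ back in gives the claimed bound.

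The main obstacle, and the reason the hypothesis $\eta_1<\sqrt{\eta_2}$ enters the statement, is choosing the Cauchy--Schwarz split so that the geometric series $\sum\gamma^{k-j}$ actually converges: this requires precisely $\gamma=\eta_1^2/\eta_2<1$. A naive split (for instance $\eta_1^{k-j}=\eta_1^{(k-j)/2}\cdot\eta_1^{(k-j)/2}$) would only produce $\sum(\eta_1/\eta_2)^{k-j}$, which demands the stronger $\eta_1<\eta_2$; the $\sqrt{\eta_2}$ threshold is the sharp one and comes out of balancing the two factors symmetrically. Everything else (the unrolling, the monotonicity $\hat v_i^{k+1}\ge v_i^{k+1}$, and the trivial $\epsilon+\hat v_i^{k+1}\ge \hat v_i^{k+1}$) is bookkeeping.
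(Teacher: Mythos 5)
Your proposal is correct and follows essentially the same route as the paper's proof: unroll the $\bbh$-recursion, split $\eta_1^{k-j}=\sqrt{\gamma}^{\,k-j}\sqrt{\eta_2}^{\,k-j}$ with $\gamma=\eta_1^2/\eta_2$, apply Cauchy--Schwarz to get the $(1-\gamma)^{-1}$ geometric factor, and lower-bound $\hat v_i^{k+1}\ge(1-\eta_2)\sum_{j}\eta_2^{k-j}(\nabla_i^j)^2$. One pedantic note: since the second-moment recursion is $\bbv^{k+1}=\eta_2\hat\bbv^k+(1-\eta_2)(\bm\nabla^k)^2$ (with $\hat\bbv^k$, not $\bbv^k$), your unrolled expression for $v_i^{k+1}$ is an inequality $\ge$ rather than an equality, but this is exactly the direction you need, so the argument is unaffected.
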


\begin{proof}
Choosing $\eta_1<1$ and defining $\gamma:=\eta_1^2/\eta_2$, it can be verified that for every $i\in\{1,\ldots,d \}$, we have
\begin{align}\label{eq.pflem4-1}
    |h_i^{k+1}|&=\left|\eta_1 h^k_i+(1-\eta_1)\nabla^k_i\right|\leq \eta_1|h_i^k|+|\nabla_i^k|\nonumber\\
    &\leq\eta_1\left(\eta_1|h_i^{k-1}|+|\nabla_i^{k-1}|\right)+|\nabla_i^k|\nonumber\\
   &\leq\sum\limits_{l=0}^k\eta_1^{k-l}|\nabla_i^l|=\sum\limits_{l=0}^k\sqrt{\gamma}^{k-l}\sqrt{\eta_2}^{k-l}|\nabla_i^l|\nonumber\\
   &\stackrel{(a)}{\leq}\left(\sum\limits_{l=0}^k\gamma^{k-l}\right)^{\frac{1}{2}}\left(\sum\limits_{l=0}^k\eta_2^{k-l}(\nabla_i^l)^2\right)^{\frac{1}{2}}\nonumber\\
   &\stackrel{(b)}{\leq}(1-\gamma)^{-\frac{1}{2}}\left(\sum\limits_{l=0}^k\eta_2^{k-l}(\nabla_i^l)^2\right)^{\frac{1}{2}}
\end{align}
where (a) follows from the Cauchy-Schwartz inequality and (b) uses $\sum\limits_{l=0}^k\gamma^{k-l}\leq \sum\limits_{k=0}^{\infty}\gamma^k\leq (1-\gamma)^{-1}$. 

For $\hat v_i^k$, first we have that $\hat v_i^1\geq(1-\eta_2)(\nabla_i^1)^2$. Then since
\begin{align*}
    \hat v_i^{k+1}\geq\eta_2\hat v_i^k+(1-\eta_2)(\nabla_i^k)^2
\end{align*}
by induction we have
\begin{equation}\label{eq.pflem4-2}
    \hat v_i^{k+1}\geq(1-\eta_2)\sum\limits_{l=0}^k\eta_2^{k-l}(\nabla_i^l)^2.
\end{equation}
Using \eqref{eq.pflem4-1} and \eqref{eq.pflem4-2}, we have
\begin{align*}
    |h_i^{k+1}|^2\leq   (1-\gamma)^{-1}\left(\sum\limits_{l=0}^k\eta_2^{k-l}(\nabla_i^l)^2\right) 
    \leq   (1-\eta_2)^{-1}(1-\gamma)^{-1}\hat v_i^{k+1}.
\end{align*}
From the update \eqref{eq.adaSCGD-3}, we have
\begin{align}
\|\bbtheta^{k+1}-\bbtheta^k\|^2 =\alpha_k^2\sum_{i=1}^d\left(\epsilon+\hat v^{k+1}_i\right)^{-1}|h_i^{k+1}|^2 \leq \alpha_k^2d(1-\eta_2)^{-1}(1-\gamma)^{-1}
\end{align}
which completes the proof.
\end{proof}

\subsubsection{Remaining steps towards Theorem \ref{theorem4}}
We are ready to prove Theorem \ref{theorem4}. 
We re-write the Lyapunov function \eqref{eq.Lyap1} as
 \begin{equation}
 \small
 	{\cal V}^k:=F(\bbtheta^k)-F(\bbtheta^*)-c_k\left\langle \nabla F(\bbtheta^{k-1}), \frac{\bbh^k}{\sqrt{\bm{\epsilon}+\hat \bbv^k}} \right\rangle + c\left\|g(\bbtheta^{k-1})-\bby^k\right\|^2
 \end{equation}
 where $c_k:=\sum\limits_{j=k}^{\infty}\eta_1^{j-k+1}\alpha_j$ and $c$ will be determined later. 
 
Using the smoothness of $F(\bbtheta^k)$ in \eqref{eq.smooth_const}, we have
{\small\begin{align}\label{eq.pfthm4-1}
 F(\bbtheta^{k+1})-F(\bbtheta^k)    &\leq \dotp{\nabla F(\bbtheta^k),\bbtheta^{k+1}-\bbtheta^k}+\frac{L}{2}\|\bbtheta^{k+1}-\bbtheta^k\|^2\nonumber\\
    &\stackrel{(a)}{=} -\alpha_k\dotp{\nabla F(\bbtheta^k), (\epsilon\mathbf{I}+\hat \bbV^{k+1})^{-\frac{1}{2}}\bbh^{k+1}}+\frac{L}{2}\|\bbtheta^{k+1}-\bbtheta^k\|^2
\end{align}}
where (a) follows from \eqref{eq.adaSCGD-3}, $\hat \bbV^{k+1}:=\diag(\hat \bbv^{k+1})$ and $(\epsilon\mathbf{I}+\hat \bbV^{k+1})^{-\frac{1}{2}}$ is understood entry-wise. 

Recalling $\bm\nabla^k:=\nabla g(\bbtheta^k;\phi^k) \nabla f(\bby^{k+1};\xi^k)$, the inner product in \eqref{eq.pfthm4-1} can be decomposed as 
\begin{align}\label{eq.pfthm4-2}
 -\dotp{\nabla F(\bbtheta^k), (\epsilon\mathbf{I}+\hat \bbV^{k+1})^{-\frac{1}{2}}\bbh^{k+1}} 
=&\underbracket{-(1-\eta_1)\dotp{\nabla F(\bbtheta^k), (\epsilon\mathbf{I}+\hat \bbV^k)^{-\frac{1}{2}}\bm\nabla^k}}_{I_1^k}\nonumber\\
&\underbracket{-\eta_1\dotp{\nabla F(\bbtheta^k), (\epsilon\mathbf{I}+\hat \bbV^k)^{-\frac{1}{2}}\bbh^k}}_{I_2^k}\nonumber\\
&\underbracket{-\dotp{\nabla F(\bbtheta^k), \left((\epsilon\mathbf{I}+\hat \bbV^{k+1})^{-\frac{1}{2}}-(\epsilon\mathbf{I}+\hat \bbV^k)^{-\frac{1}{2}}\right)\bbh^{k+1}}}_{I_3^k}. 
\end{align}

By defining $\bar{\bm\nabla}^k:=\nabla g(\bbtheta^k;\phi^k) \nabla f(g(\bbtheta^k);\xi^k)$, we have
\begin{align}\label{eq.pfthm4-3}
I_1^k=&-(1-\eta_1)\dotp{\nabla F(\bbtheta^k), (\epsilon\mathbf{I}+\hat \bbV^k)^{-\frac{1}{2}}\bar{\bm\nabla}^k}\nonumber\\
&-(1-\eta_1)\dotp{\nabla F(\bbtheta^k), (\epsilon\mathbf{I}+\hat \bbV^k)^{-\frac{1}{2}}\big(\bm\nabla^k-\bar{\bm\nabla}^k\big)}.
\end{align}
Conditioned on $\mathcal F^k$, taking expectation over $\phi^k$ and $\xi^k$ on $I_1^k$, we have
{\small\begin{align} \label{eq.pfthm4-4-0}
 \EE\left[I_1^k|\mathcal F^k\right] \stackrel{(a)}{\leq}  -(1-\eta_1)\left\|\nabla F(\bbtheta^k)\right\|^2_{(\epsilon\mathbf{I}+\hat \bbV^k)^{-\frac{1}{2}}} \!\!\!+\!(1-\eta_1)\big\|(\epsilon\mathbf{I}\!+\!\hat \bbV^k)^{-\frac{1}{4}}\!\nabla F(\bbtheta^k)\big\| \EE\!\left[\big\|(\epsilon\mathbf{I}\!+\!\hat \bbV^k)^{-\frac{1}{4}}\!\big(\bm\nabla^k\!-\!\bar{\bm\nabla}^k\big)\big\| \big |\mathcal F^k\right]
\end{align}}
where (a) uses $\EE\left[\bar{\bm\nabla}^k|\mathcal F^k\right]=\nabla F(\bbtheta^k)$. 

\blue{Expanding the second term in the RHS of \eqref{eq.pfthm4-4-0}, we have
{\small\begin{align}\label{eq.pfthm4-4}
	\EE\left[I_1^k|\mathcal F^k\right] 
	\stackrel{(b)}{\leq}& -(1-\eta_1)\left(1-\frac{\alpha_k}{4\beta_k} \right) \left\|\nabla F(\bbtheta^k)\right\|^2_{(\epsilon\mathbf{I}+\hat \bbV^k)^{-\frac{1}{2}}} + (1-\eta_1)\frac{\beta_k}{\alpha_k}\EE\left[\Big\| \bm\nabla^k-\bar{\bm\nabla}^k \Big\|^2_{(\epsilon\mathbf{I}+\hat \bbV^k)^{-\frac{1}{2}}}\big |\mathcal F^k\right]\nonumber\\
	\stackrel{(c)}{\leq}&\!-\!(1-\eta_1)\left(1-\frac{\alpha_k}{4\beta_k} \right) \left\|\nabla F(\bbtheta^k)\right\|^2_{(\epsilon\mathbf{I}+\hat \bbV^k)^{-\frac{1}{2}}} + (1-\eta_1)\frac{\beta_k}{\alpha_k}\epsilon^{-\frac{1}{2}}C_g^2L_f^2\EE\left[\Big\|g(\bbtheta^k)-\bby^{k+1}\Big\|^2\big |\mathcal F^k\right]\nonumber\\
	\stackrel{(d)}{\leq}&\!-\!(1-\eta_1)\left(1-\frac{\alpha_k}{4\beta_k} \right)(\epsilon+C_g^2C_f^2)^{-\frac{1}{2}} \left\|\nabla F(\bbtheta^k)\right\|^2 + \frac{\beta_k}{\alpha_k}\epsilon^{-\frac{1}{2}}C_g^2L_f^2\EE\left[\Big\|g(\bbtheta^k)-\bby^{k+1}\Big\|^2\big |\mathcal F^k\right]
\end{align}}
where (b) is due to the Young's inequality $ab\leq \frac{a^2}{4\beta_k}+\beta_k b^2$ with $a:=(1-\eta_1)^{\frac{1}{2}}\big\|(\epsilon\mathbf{I}\!+\!\hat \bbV^k)^{-\frac{1}{4}}\!\nabla F(\bbtheta^k)\big\|$ and $b:=(1-\eta_1)^{\frac{1}{2}}\big\|(\epsilon\mathbf{I}\!+\!\hat \bbV^k)^{-\frac{1}{4}}\!\big(\bm\nabla^k\!-\!\bar{\bm\nabla}^k\big)\big\|$; (c) uses the entrywise bound $(\epsilon+\hat{v}_i^k)^{-\frac{1}{2}}\leq  \epsilon^{-\frac{1}{2}}$ and uses Assumptions 1 and 2 to obtain
\begin{equation}
\big\|\bm\nabla^k-\bar{\bm\nabla}^k \big\|^2\leq C_g^2L_f^2\|g(\bbtheta^k)-\bby^{k+1}\|^2
% \|(\epsilon\mathbf{I}+\hat \bbV^k)^{-\frac{1}{2}}\|\leq 
\end{equation}
and (d) uses $1-\eta_1\leq 1$ and Lemma \ref{lemma3}.}

 Likewise, for $I_2^k$, we have
{\small \begin{align}\label{eq.pfthm4-5}
 \EE\left[I_2^k|\mathcal F^k\right] 	= & -\eta_1\dotp{\nabla F(\bbtheta^{k-1}), (\epsilon\mathbf{I}+\hat \bbV^k)^{-\frac{1}{2}}\bbh^k} -\eta_1\dotp{\nabla F(\bbtheta^k)-\nabla F(\bbtheta^{k-1}), (\epsilon\mathbf{I}+\hat \bbV^k)^{-\frac{1}{2}}\bbh^k}\nonumber\\
 	\stackrel{(a)}{\leq}&-\eta_1\dotp{\nabla F(\bbtheta^{k-1}), (\epsilon\mathbf{I}+\hat \bbV^k)^{-\frac{1}{2}}\bbh^k}+\eta_1L\alpha_{k-1}^{-1}\|\bbtheta^k-\bbtheta^{k-1}\|^2\nonumber\\
 \stackrel{(b)}{\leq}&\!-\eta_1\dotp{\nabla F(\bbtheta^{k-1}), (\epsilon\mathbf{I}+\hat \bbV^k)^{-\frac{1}{2}}\bbh^k}\!+\!\alpha_{k-1}\eta_1Ld(1-\eta_2)^{-1}(1-\gamma)^{-1}\nonumber\\
 \stackrel{(c)}{=}&-\eta_1(I_1^{k-1}+I_2^{k-1}+I_3^{k-1})+\alpha_{k-1}\eta_1Ld(1-\eta_2)^{-1}(1-\gamma)^{-1}
\end{align}}
where (a) follows from the $L$-smoothness of $F(\bbtheta)$ in \eqref{eq.smooth_const} implied by Assumptions 1 and 2; (b) follows from Lemma \ref{lemme4}; and (c) uses again the decomposition \eqref{eq.pfthm4-2}. 

Use $h^k_i, v^k_i, \theta^k_i, \nabla^k_i$ to denote the $i$th entry of $\bbh^k, \bbv^k, \bbtheta^k, \bm\nabla^k$. We have $|\nabla_iF(\boldsymbol{\bbtheta}^k)|\leq\|\nabla F(\boldsymbol{\bbtheta}^k)\|$, $|h_i^{k+1}|\leq\|\mathbf{h}^{k+1}\|$ and $(\epsilon+\hat{v}_i^k)^{\frac{1}{2}}\geq (\epsilon+\hat{v}_i^{k+1})^{\frac{1}{2}}$ as $\hat{v}_i^{k+1}=\max\{\cdot, \hat{v}_i^k\}\geq\hat{v}_i^k$. 

For $I_3^k$, we have
{\small\begin{align}\label{eq.pfthm4-6}
	\EE\left[I_3^k|\mathcal F^k\right] = &-\sum_{i=1}^d\nabla_i F(\bbtheta^k) \left((\epsilon+\hat v_i^{k+1})^{-\frac{1}{2}}-(\epsilon+\hat v_i^k)^{-\frac{1}{2}}\right)h_i^{k+1}\nonumber\\
	\leq&\|\nabla F(\bbtheta^k)\|\|\bbh^{k+1}\|\sum\limits_{i=1}^d\left((\epsilon+\hat v_i^k)^{-\frac{1}{2}}-(\epsilon+\hat v_i^{k+1})^{-\frac{1}{2}}\right)\nonumber\\
	\stackrel{(d)}{\leq}& C_g^2C_f^2\sum\limits_{i=1}^d\left((\epsilon+\hat v_i^k)^{-\frac{1}{2}}-(\epsilon+\hat v_i^{k+1})^{-\frac{1}{2}}\right)
\end{align}}
where (d) follows from Assumption 5 and Lemma \ref{lemma3}. 

Recalling the definition of ${\cal V}^k$ in \eqref{eq.Lyap1}, we have 
{\small\begin{align}\label{eq.pfthm4-6-2}
  {\cal V}^{k+1}-	{\cal V}^k 
 = &\,F(\bbtheta^{k+1})\!-\!F(\bbtheta^k)\!-\!c_{k+1}\left\langle \nabla F(\bbtheta^k), (\epsilon\mathbf{I}+\hat{\bbV}^{k+1})^{-\frac{1}{2}}\bbh^{k+1}\right\rangle	\nonumber\\
 &+ c\|g(\bbtheta^k)-\bby^{k+1}\|^2+c_k\left\langle \nabla F(\bbtheta^{k-1}), (\epsilon\mathbf{I}+\hat{\bbV}^k)^{-\frac{1}{2}}\bbh^k\right\rangle  - c\|g(\bbtheta^{k-1})-\bby^k\|^2\nonumber\\
 \stackrel{\eqref{eq.pfthm4-1}}{\leq}&-(\alpha_k+c_{k+1})\dotp{\nabla F(\bbtheta^k), (\epsilon\mathbf{I}+\hat \bbV^{k+1})^{-\frac{1}{2}}\bbh^{k+1}} +\frac{L}{2}\|\bbtheta^{k+1}-\bbtheta^k\|^2+ c\|g(\bbtheta^k)-\bby^{k+1}\|^2\nonumber\\
 & +c_k\left\langle \nabla F(\bbtheta^{k-1}), (\epsilon\mathbf{I}+\hat{\bbV}^k)^{-\frac{1}{2}}\bbh^k\right\rangle - c\|g(\bbtheta^{k-1})-\bby^k\|^2.
\end{align}}

Conditioned on $\mathcal F^k$, taking expectation over $\phi^k$ and $\xi^k$ on both sides of \eqref{eq.pfthm4-6-2}, we have
{\small\begin{align}\label{eq.pfthm4-6-3}
 \EE[{\cal V}^{k+1}|\mathcal F^k]-{\cal V}^k 
 \leq&(\alpha_k+c_{k+1})\EE[I_1^k+I_2^k+I_3^k\mid{\cal F}^k]+\frac{L}{2}\EE[\|\bbtheta^{k+1}-\bbtheta^k\|^2\mid{\cal F}^k]\nonumber\\
    & +c_k(I_1^{k-1}+I_2^{k-1}+I_3^{k-1})+c\EE[\|g(\bbtheta^k)-\bby^{k+1}\|^2\mid{\cal F}^k] -c\|g(\bbtheta^{k-1})-\bby^k\|^2\nonumber\\
 \stackrel{(e)}{\leq}& -(\alpha_k+c_{k+1})(1-\eta_1)\left(1-\frac{\alpha_k}{4\beta_k}\right)(\epsilon+C_g^2C_f^2)^{-\frac{1}{2}}\|\nabla F(\bbtheta^k)\|^2\nonumber\\
    &-\left((\alpha_k+c_{k+1})\eta_1-c_k\right)(I_1^{k-1}+I_2^{k-1}+I_3^{k-1})\nonumber\\
    &+(\alpha_k+c_{k+1})\alpha_{k-1}\eta_1 Ld(1-\eta_2)^{-1}(1-\gamma)^{-1}\nonumber\\
    &+(\alpha_k+c_{k+1})C_g^2C_f^2\sum\limits_{i=1}^d\left((\epsilon+\hat v_i^k)^{-\frac{1}{2}}-(\epsilon+\hat v_i^{k+1})^{-\frac{1}{2}}\right)\nonumber\\
        &+\left(c+\frac{\alpha_k+c_{k+1}}{\alpha_k}\beta_k\epsilon^{-\frac{1}{2}}C_g^2L_f^2\right)\EE[\|g(\bbtheta^k)-\bby^{k+1}\|^2\mid{\cal F}^k]\nonumber\\
        &+\frac{L}{2}\alpha_k^2(1-\eta_2)^{-1}(1-\gamma)^{-1}-c\|g(\bbtheta^{k-1})-\bby^k\|^2
%    & +c(1+\beta_k)\EE[\|g(\bbtheta^k)-\bby^{k+1}\|^2\mid{\cal F}^k]-c\|g(\bbtheta^{k-1})-\bby^k\|^2
\end{align}}
where (e) substitutes $\EE[I_1^k+I_2^k+I_3^k\mid{\cal F}^k]$ by \eqref{eq.pfthm4-4}-\eqref{eq.pfthm4-6} and applies Lemma \ref{lemme4}.

Selecting $\alpha_{k+1}\leq \alpha_k$ and $c_k:=\sum\limits_{j=k}^{\infty}\eta_1^{j-k+1}\alpha_j\leq (1-\eta_1)^{-1}\alpha_k$, we have
\begin{align*}
	\frac{\alpha_k+c_{k+1}}{\alpha_k }\beta_k\epsilon^{-\frac{1}{2}}C_g^2L_f^2&\leq \frac{\alpha_k+(1-\eta_1)^{-1}\alpha_{k+1}}{\alpha_k}\beta_k\epsilon^{-\frac{1}{2}}C_g^2L_f^2\nonumber\\
	&\leq \frac{\alpha_k+(1-\eta_1)^{-1}\alpha_k}{\alpha_k}\beta_k\epsilon^{-\frac{1}{2}}C_g^2L_f^2 := c \beta_k
\end{align*}
where we define $c:=(1+(1-\eta_1)^{-1})\epsilon^{-\frac{1}{2}}C_g^2L_f^2$.

Therefore, applying Lemma \ref{lemma2}, we have
{\small\begin{align}\label{eq.pfthm4-6-4}
&\Big(c+\frac{\alpha_k+c_{k+1}}{\alpha_k}\beta_k\epsilon^{-\frac{1}{2}}C_g^2L_f^2\Big)\EE[\|g(\bbtheta^k)-\bby^{k+1}\|^2\mid{\cal F}^k] -c\|g(\bbtheta^{k-1})-\bby^k\|^2\nonumber\\
		\leq &c(1+\beta_k)\EE[\|g(\bbtheta^k)-\bby^{k+1}\|^2\mid{\cal F}^k]-c\|g(\bbtheta^{k-1})-\bby^k\|^2\nonumber\\
	\leq &c\left((1+\beta_k)(1-\beta_k)^2-1\right)\|g(\bbtheta^{k-1})-\bby^k\|^2 +4c(1+\beta_k)(1-\beta_k)^2C_g^2\|\bbtheta^k-\bbtheta^{k-1}\|^2+2c(1+\beta_k)\beta_k^2V_g^2\nonumber\\
\leq &4c(1+\beta_k)(1-\beta_k)^2C_g^2\left(\alpha_{k-1}^2d(1-\eta_2)^{-1}(1-\gamma)^{-1}\right) +2c(1+\beta_k)\beta_k^2V_g^2\nonumber\\
\stackrel{(f)}{\leq} &4cC_g^2 \alpha_{k-1}^2d(1-\eta_2)^{-1}(1-\gamma)^{-1}+2c(1+\beta_k)\beta_k^2V_g^2
\end{align}}
where (f) follows from $(1+\beta_k)(1-\beta_k)^2\!=\!(1-\beta_k^2)(1-\beta_k)\!\leq\! 1$.

Selecting $c_k:=\sum\limits_{j=k}^{\infty}\eta_1^{j-k+1}\alpha_j$ implies $(\alpha_k+c_{k+1})\eta_1=c_k$. 
We thus obtain from \eqref{eq.pfthm4-6-3}-\eqref{eq.pfthm4-6-4} and $c_k \leq (1-\eta_1)^{-1} \alpha_k$ that
{\small\begin{align}\label{eq.pfthm4-7}
 \EE[{\cal V}^{k+1}|\mathcal F^k]-{\cal V}^k 
 \leq& -(\alpha_k+c_{k+1})(1-\eta_1)\Big(1-\frac{\alpha_k}{4\beta_k}\Big)(\epsilon+C_g^2C_f^2)^{-\frac{1}{2}}\|\nabla F(\bbtheta^k)\|^2\\
    &+4c C_g^2\alpha_{k-1}^2d(1-\eta_2)^{-1}(1-\gamma)^{-1}+2c\beta_k^2(1+\beta_k)V_g^2\nonumber\\
    &+(1-\eta_1)^{-1}Ld(1-\eta_2)^{-1}(1-\gamma)^{-1}\alpha_k\alpha_{k-1}\nonumber\\
    &+(\alpha_k+c_{k+1})C_g^2C_f^2\sum\limits_{i=1}^d\left((\epsilon+\hat v_i^k)^{-\frac{1}{2}}-(\epsilon+\hat v_i^{k+1})^{-\frac{1}{2}}\right) +\frac{L}{2}\alpha_k^2(1-\eta_2)^{-1}(1-\gamma)^{-1}. \nonumber
\end{align}}
%where (f) follows from Lemma \ref{lemma2}.
 
 Defining $\tilde{\eta}:=(1-\eta_1)^{-1}(1-\eta_2)^{-1}(1-\gamma)^{-1}$ 
%\begin{equation}
%    \tilde{\eta}:=(1-\eta_1)^{-1}(1-\eta_2)^{-1}(1-\gamma)^{-1}
%\end{equation}
 and rearranging terms in \eqref{eq.pfthm4-7} and telescoping from $k=0,\cdots,K-1$, we have
{\small\begin{align*}
    &\sum\limits_{k=0}^{K-1}\alpha_k(1-\eta_1)\left(1-\frac{\alpha_k}{4\beta_k}\right)(\epsilon+C_g^2C_f^2)^{-\frac{1}{2}}\EE[\|\nabla F(\bbtheta^k)\|^2]\\
    \leq&{\cal V}^0-\EE[{\cal V}^K] + \sum\limits_{k=0}^{K-1}\left(4c C_g^2\alpha_{k-1}^2(1-\eta_1)\tilde{\eta}+2c\beta_k^2(1+\beta_k)V_g^2\right)\\
    &+\sum\limits_{k=0}^{K-1}\left(\tilde{\eta}Ld\alpha_{k-1}^2+\frac{L}{2}(1-\eta_1)\tilde{\eta}\alpha_k^2\right) +    \sum_{k=0}^{K-1}  (\alpha_k+c_{k+1})C_g^2C_f^2\sum\limits_{i=1}^d\left((\epsilon+\hat v_i^k)^{-\frac{1}{2}}-(\epsilon+\hat v_i^{k+1})^{-\frac{1}{2}}\right)\nonumber\\
  \stackrel{(g)}{\leq}&{\cal V}^0+(1-\eta_1)^{-1}\alpha_kC_gC_fd(1-\eta_1)\tilde{\eta} + \sum\limits_{k=0}^{K-1}\left(4c C_g^2\alpha_{k-1}^2(1-\eta_1)\tilde{\eta}+2c\beta_k^2(1+\beta_k)V_g^2\right)\\
    &+\sum\limits_{k=0}^{K-1}\left(\tilde{\eta}Ld\alpha_{k-1}^2+\frac{L}{2}(1-\eta_1)\tilde{\eta}\alpha_k^2\right) +(1+(1-\eta_1)^{-1})\alpha_0C_g^2C_f^2\sum_{i=1}^d\left((\epsilon+\hat v_i^0)^{-\frac{1}{2}}-(\epsilon+\hat v_i^K)^{-\frac{1}{2}}\right)
\end{align*}}
where (g) follows from $\alpha_k+c_{k+1}\leq (1+(1-\eta_1)^{-1})\alpha_k\leq (1+(1-\eta_1)^{-1})\alpha_0$ and the definition of ${\cal V}^k$ that 
{\small\begin{align*}
    \EE[{\cal V}^k] &\geq F(\bbtheta^k)-F(\bbtheta^*)+c\|g(\bbtheta^{k-1})-\bby^k\|^2\\
    &-(1-\eta_1)^{-1}\alpha_kC_gC_fd(1-\eta_2)^{-1}(1-\gamma)^{-1}.
\end{align*}}
Select $\alpha_k=2\beta_k=\alpha=\frac{1}{\sqrt{K}}$ so that $1-\frac{\alpha_k}{4\beta_k}=\frac{1}{2}$. We have that
{\small\begin{align*}
& \frac{1}{K}\sum\limits_{k=0}^{K-1}\EE[\|\nabla F(\bbtheta^k)\|^2]\leq  \frac{{\cal V}^0+ \sum\limits_{k=0}^{K-1}\left(4C_g^2(1-\eta_1)\tilde{\eta}+V_g^2\right)c\alpha^2}{K \frac{\alpha(1-\eta_1)}{2}(\epsilon+C_g^2C_f^2)^{-\frac{1}{2}}}+\frac{\sum\limits_{k=0}^{K-1}\left(\tilde{\eta}Ld+\frac{L}{2}(1-\eta_1)\tilde{\eta}\right)\alpha^2+ C_gC_fd\tilde{\eta}\alpha}{K \frac{\alpha(1-\eta_1)}{2}(\epsilon+C_g^2C_f^2)^{-\frac{1}{2}}}\nonumber\\
&\qquad\qquad\qquad\qquad\qquad\quad +\frac{(1+(1-\eta_1)^{-1})\alpha_0C_g^2C_f^2\sum_{i=1}^d(\epsilon+\hat v_i^0)^{-\frac{1}{2}}}{K \frac{\alpha(1-\eta_1)}{2}(\epsilon+C_g^2C_f^2)^{-\frac{1}{2}}}\nonumber\\
=&\frac{2(\epsilon+C_g^2C_f^2)^{\frac{1}{2}}}{(1-\eta_1)}\bigg(\!\frac{{\cal V}^0\!+\!(4C_g^2(1-\eta_1)\tilde{\eta}+V_g^2)c + (d+\frac{1}{2}(1-\eta_1))\tilde{\eta}L}{\sqrt{K}} +\!\frac{C_gC_fd\tilde{\eta}+(1+(1-\eta_1)^{-1})C_g^2C_f^2d \epsilon^{-\frac{1}{2}}}{K}\bigg)\nonumber\\
\leq &\frac{2(\epsilon+C_g^2C_f^2)^{\frac{1}{2}}}{(1-\eta_1)}\bigg( \frac{{\cal V}^0+  \left(4C_g^2\tilde{\eta}+V_g^2\right)c + 2d\tilde{\eta}L}{\sqrt{K}}+\frac{C_gC_fd\tilde{\eta}}{K} +\frac{(1+(1-\eta_1)^{-1})C_g^2C_f^2d \epsilon^{-\frac{1}{2}}}{K}\bigg)
\end{align*}}
from which the proof is complete.

\subsection{Proof of Theorem \ref{theorem-mSCSC}} 
In this section, we establish the convergence results of the multi-level SCSC, and present the corresponding analysis. 
We first prove a multi-level version of the tracking variance lemma. 
 \begin{lemma}[Tracking variance of multi-level SCSC]
\label{multi-lemma2}
If Assumptions m1-m4 hold, and $\bby_n^{k+1}$ is generated by running the multi-level SCSC iteration \eqref{eq.SCSCm} given $\bbtheta^k$, then the variance of $\bby_n^{k+1}$ satisfies 
\begin{align}\label{eq.multi-lemma2}
\EE\left[\|\bby_n^{k+1}-f_n(\bby_{n-1}^{k+1})\|^2\mid{\cal F}^k\right]\leq\, & (1-\beta_k)^2\|\bby_n^k-f_n(\bby_{n-1}^k)\|^2\nonumber\\
&+4(1-\beta_k)^2C_n^2\EE\left[\|\bby_{n-1}^k-\bby_{n-1}^{k+1}\|^2|\mathcal F^k\right]+2\beta_k^2V^2.
\end{align}
\end{lemma}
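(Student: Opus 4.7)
The plan is to mirror the proof of Lemma~\ref{lemma2} for the two-level case, adding one layer of bookkeeping because $\bby_{n-1}^{k+1}$ is now random given ${\cal F}^k$ — it depends on $\xi_1^k,\ldots,\xi_{n-1}^k$. To handle this, I would introduce the finer sigma-algebra $\hat{\cal F}^k_n:=\sigma({\cal F}^k\cup\{\xi_1^k,\ldots,\xi_{n-1}^k\})$, under which $\bby_{n-1}^{k+1}$ is measurable while $\xi_n^k$ is still independent, do all moment analysis conditioned on $\hat{\cal F}^k_n$, and then lift back to a conditional expectation given ${\cal F}^k$ via the tower property.

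First I would subtract $f_n(\bby_{n-1}^{k+1})$ from the update \eqref{eq.SCSCm-2} and add/subtract $f_n(\bby_{n-1}^k)$ to write
\begin{equation*}
\bby_n^{k+1}-f_n(\bby_{n-1}^{k+1})=(1-\beta_k)\bigl(\bby_n^k-f_n(\bby_{n-1}^k)\bigr)+(1-\beta_k)T_1+\beta_k T_2+(1-\beta_k)T_3,
\end{equation*}
with $T_1:=f_n(\bby_{n-1}^k)-f_n(\bby_{n-1}^{k+1})$, $T_2:=f_n(\bby_{n-1}^{k+1};\xi_n^k)-f_n(\bby_{n-1}^{k+1})$, and $T_3:=f_n(\bby_{n-1}^{k+1};\xi_n^k)-f_n(\bby_{n-1}^k;\xi_n^k)$ — the direct analogue of the $T_1,T_2,T_3$ used in Lemma~\ref{lemma2}. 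Under $\hat{\cal F}^k_n$, Assumption~m3 gives $\EE[T_2\mid\hat{\cal F}^k_n]=0$ and $\EE[T_3\mid\hat{\cal F}^k_n]=f_n(\bby_{n-1}^{k+1})-f_n(\bby_{n-1}^k)=-T_1$, so $(1-\beta_k)T_1+\beta_k T_2+(1-\beta_k)T_3$ has zero conditional mean. Squaring the decomposition and taking $\EE[\,\cdot\mid\hat{\cal F}^k_n]$ therefore kills the cross term against the $\hat{\cal F}^k_n$-measurable leading piece.

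The remaining second moment I would split by $\|A+B\|^2\le 2\|A\|^2+2\|B\|^2$ with $A=(1-\beta_k)T_1+\beta_k T_2$ and $B=(1-\beta_k)T_3$; the cross term inside $\|A\|^2$ again drops because $\EE[T_2\mid\hat{\cal F}^k_n]=0$. Then Assumption~m2 combined with Jensen yields $\|\nabla f_n(\cdot)\|\le C_n$, hence via the integral mean-value inequality $\|T_1\|^2\le C_n^2\|\bby_{n-1}^{k+1}-\bby_{n-1}^k\|^2$, and the same pointwise argument applied for each realization of $\xi_n^k$ gives $\EE[\|T_3\|^2\mid\hat{\cal F}^k_n]\le C_n^2\|\bby_{n-1}^{k+1}-\bby_{n-1}^k\|^2$; Assumption~m4 directly provides $\EE[\|T_2\|^2\mid\hat{\cal F}^k_n]\le V^2$. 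Taking the outer expectation $\EE[\,\cdot\mid{\cal F}^k]$ and noting that $\bby_n^k$ and $f_n(\bby_{n-1}^k)$ are ${\cal F}^k$-measurable yields \eqref{eq.multi-lemma2}. The main obstacle is precisely this conditioning step — freezing $\bby_{n-1}^{k+1}$ so that the unbiasedness cancellations used in the two-level proof still apply — since once the right sigma-algebra is in place, every subsequent manipulation is a direct transplant of the Lemma~\ref{lemma2} argument.
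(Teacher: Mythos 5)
Your proposal is correct and follows essentially the same route as the paper: the identical $T_1,T_2,T_3$ decomposition, the same vanishing of the conditional mean of $(1-\beta_k)T_1+\beta_kT_2+(1-\beta_k)T_3$, the same $2\|A\|^2+2\|B\|^2$ split, and the same Lipschitz/variance bounds; your $\hat{\cal F}^k_n$ is just the paper's ${\cal F}^{k,n}$ written in terms of the underlying samples, and the paper likewise lifts back to ${\cal F}^k$ at the end. No gaps.
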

\begin{proof}
Use ${\cal F}^{k,n}$ to denote the $\sigma$-algebra generated by $\{\cdots,\bbtheta^k,\bby_1^k,\ldots,\bby_{n-1}^k\}$ From the update \eqref{eq.SCSCm}, we have that
\begin{align}\label{eq.pf-multi-lemma2-1}
\bby_n^{k+1}-f_n(\bby_{n-1}^{k+1})&=(1-\beta_k)\left(\bby_n^k-f_n(\bby_{n-1}^k)\right)+(1-\beta_k)\left(f_n(\bby_{n-1}^k)-f_n(\bby_{n-1}^{k+1})\right)\nonumber\\
&\quad+\beta_k\left(f(\bby_{n-1}^{k+1};\xi_n^k)-f_n(\bby_{n-1}^{k+1})\right)+(1-\beta_k)\left(f(\bby_{n-1}^{k+1};\xi_n^k)-f(\bby_{n-1}^k;\xi_n^k)\right)\nonumber\\
&=(1-\beta_k)(\bby_n^k-f_n(\bby_{n-1}^k))+(1-\beta_k)T_1+\beta_k T_2+(1-\beta_k)T_3
\end{align}
where we define the three terms as
\begin{align*}
    &T_1:=f_n(\bby_{n-1}^k)-f_n(\bby_{n-1}^{k+1})\\
    &T_2:=f_n(\bby_{n-1}^{k+1};\xi_n^k)-f_n(\bby_{n-1}^{k+1})\\
    &T_3:=f_n(\bby_{n-1}^{k+1};\xi_n^k)-f_n(\bby_{n-1}^{k};\xi_n^k).
\end{align*}
Conditioned on ${\cal F}^k$, taking expectation over $\phi^k$, we have
\begin{align}
\EE\left[(1-\beta_k)T_1+\beta_k T_2+(1-\beta_k)T_3|\mathcal F^k\right]=\mathbf{0}.
\end{align}
Therefore, conditioned on ${\cal F}^{k,n}:=\{\mathcal F^k, \bby_1^{k+1},\ldots,\bby_{n-1}^{k+1}\}$, taking expectation on \eqref{eq.pf-multi-lemma2-1}, we have 
\begin{align*}
\,&\EE[\|\bby_n^{k+1}-f_n(\bby_{n-1}^{k+1})\|^2\mid{\cal F}^{k,n}]\\
=\,&\EE[\|(1-\beta_k)(\bby_n^k-f_n(\bby_{n-1}^k))\|^2|\mathcal F^k]+\EE\left[\|(1-\beta_k)T_1+\beta_k T_2+(1-\beta_k)T_3\|^2\mid{\cal F}^{k,n}\right]\\
&+2\EE\left[\left\langle (1-\beta_k)(\bby_n^k-f_n(\bby_{n-1}^k)), (1-\beta_k)T_1+\beta_k T_2+(1-\beta_k)T_3\right\rangle\mid{\cal F}^{k,n}\right]\\
= &(1-\beta_k)^2\|\bby_n^k-f_n(\bby_{n-1}^k)\|^2+\EE\left[\|(1-\beta_k)T_1+\beta_k T_2+(1-\beta_k)T_3\|^2\mid{\cal F}^{k,n}\right]\\
\leq & (1-\beta_k)^2\|\bby_n^k-f_n(\bby_{n-1}^k)\|^2+2\EE\left[\|(1-\beta_k)T_1+\beta_kT_2\|^2\mid{\cal F}^{k,n}\right]+2(1-\beta_k)^2\EE\left[\|T_3\|^2\mid{\cal F}^{k,n}\right]\\
\leq & (1-\beta_k)^2\|\bby_n^k-f_n(\bby_{n-1}^k)\|^2 + 2(1-\beta_k)^2\EE[\|T_1\|^2\mid{\cal F}^{k,n}]+2\beta_k^2\EE[\|T_2\|^2\mid{\cal F}^{k,n}]\\
&+2\beta_k(1-\beta_k)\left\langle T_1,\EE[T_2\mid{\cal F}^{k,n}]\right\rangle+2(1-\beta_k)^2\EE[\|T_3\|^2\mid{\cal F}^{k,n}]\\
\leq & (1-\beta_k)^2\|\bby_n^k-f_n(\bby_{n-1}^k)\|^2+2(1-\beta_k)^2\EE\left[\|f_n(\bby_{n-1}^k)-f_n(\bby_{n-1}^{k+1})\|^2|\mathcal F^k\right]\\
&+2(1-\beta_k)^2\EE\left[\|f_n(\bby_{n-1}^k;\xi_n^k)-f_n(\bby_{n-1}^{k+1};\xi_n^k)\|^2|\mathcal F^k\right]+2\beta_k^2V^2\\
\leq & (1-\beta_k)^2\|\bby_n^k-f_n(\bby_{n-1}^k)\|^2+4(1-\beta_k)^2C_n^2\EE\left[\|\bby_{n-1}^k-\bby_{n-1}^{k+1}\|^2|\mathcal F^k\right]+2\beta_k^2V^2
\end{align*}
from which the proof is complete.
\end{proof}

Define $f^{(n)}(\bbtheta):=f_n\circ f_{n-1}\circ \cdots \circ f_1(\bbtheta)$ and the stochastic compositional gradients as
\begin{align*}
&\bm\nabla^k:=\nabla f_1(\bbtheta^k;\xi_1^k)\cdots\nabla f_{N-1}(\bby_{N-2}^{k+1};\xi_{N-1}^k)\nabla f_N(\bby_{N-1}^{k+1};\xi_N^k)\\	
&\bar{\bm\nabla}^k:= \nabla f_1(\bbtheta^k;\xi_1^k)\cdots\nabla f_{N-1}(f^{(N-2)}(\bbtheta^k);\xi_{N-1}^k)\nabla f_N(f^{(N-1)}(\bbtheta^k);\xi_N^k).
\end{align*}
Thus, taking expectation with respect to $\xi_1^k,\ldots,\xi_{N}^k$, we have
{\small
\begin{align}\label{eq.pf.multi-lemma2-3}
 \EE\left[\bm\nabla^k\mid{\cal F}^{k,N}\right]-\bar{\bm\nabla}^k 
	=&\, \nabla f_1(\bbtheta^k;\xi_1^k)\cdots\nabla f_{N-1}(\bby_{N-2}^{k+1};\xi_{N-1}^k)\nabla f_N(\bby_{N-1}^{k+1};\xi_N^k)\nonumber\\
	 &-\nabla f_1(\bbtheta^k;\xi_1^k)\cdots\nabla f_{N-1}(\bby_{N-2}^{k+1};\xi_{N-1}^k)\nabla f_N(f^{(N-1)}(\bbtheta^k);\xi_N^k)\nonumber\\
	&+\nabla f_1(\bbtheta^k;\xi_1^k)\cdots\nabla f_{N-1}(\bby_{N-2}^{k+1};\xi_{N-1}^k)\nabla f_N(f^{(N-1)}(\bbtheta^k);\xi_N^k)\nonumber\\ 
	&- \nabla f_1(\bbtheta^k;\xi_1^k)\cdots \nabla f_{N-1}(f^{(N-2)}(\bbtheta^k);\xi_{N-1}^k) \nabla f_N(f^{(N-1)}(\bbtheta^k);\xi_N^k) \nonumber\\
		    &\, \cdots \nonumber\\
	& +\nabla f_1(\bbtheta^k;\xi_1^k)\nabla f_2(\bby_1^{k+1};\xi_2^k)\cdots\nabla f_N(f^{(N-1)}(\bbtheta^k);\xi_N^k)\nonumber\\
	&-\nabla f_1(\bbtheta^k;\xi_1^k)\nabla f_2(f_1(\bbtheta^k);\xi_2^k)\cdots\nabla f_N(f^{(N-1)}(\bbtheta^k);\xi_N^k).
\end{align}
}

Since the $n$th difference term in \eqref{eq.pf.multi-lemma2-3} can be bounded by (for convenience, define $\bby_0^{k+1}=\bbtheta^k$)
{\small\begin{align}\label{eq.pf.multi-lemma2-3-2}
	&\, \Big\|\EE\Big[\nabla f_1(\bbtheta^k;\xi_1^k)\cdots\nabla f_n(\bby_{n-1}^{k+1};\xi_n^k)\cdots f_N(f^{(N-1)}(\bbtheta^k);\xi_N^k)\nonumber\\
	 &~~~-\nabla f_1(\bbtheta^k;\xi_1^k)\cdots\nabla f_n(f^{(n-1)}(\bbtheta^k);\xi_n^k)\cdots\nabla f_N(f^{(N-1)}(\bbtheta^k);\xi_N^k)\mid{\cal F}^k\Big]\Big\|\nonumber\\
	 \stackrel{(a)}{\leq} &\, \underbracket{\EE\Big[\left\|\nabla f_1(\bbtheta^k;\xi_1^k)\cdots\nabla f_{n-1}(\bby_{n-2}^{k+1};\xi_{n-1}^k)\nabla f_{n+1}(f^{(n)}(\bbtheta^k);\xi_{n+1}^k)\cdots \nabla f_N(f^{(N-1)}(\bbtheta^k);\xi_N^k) \right\|^2\mid{\cal F}^k\Big]^{\frac{1}{2}}}_{I_n^k}\nonumber\\
&\times \underbracket{\EE\Big[\left\|\nabla f_n(\bby_{n-1}^{k+1};\xi_n^k)- \nabla f_n(f^{(n-1)}(\bbtheta^k);\xi_n^k)\right\|^2\mid{\cal F}^k\Big]^{\frac{1}{2}}}_{J_n^k}
\end{align}}
where (a) uses the Cauchy-Schwartz inequality. 

For $I_n^k$, using Assumption m2, we have
{\small\begin{align*}
	I_n^k&=\EE\Big[\EE\Big[\left\|\nabla f_1(\bbtheta^k;\xi_1^k)\cdots \nabla f_N(f^{(N-1)}(\bbtheta^k);\xi_N^k) \right\|^2\mid{\cal F}^{k,N}\Big]\mid{\cal F}^k\Big]^{\frac{1}{2}}\\
	&\leq \EE\Bigg[\EE\Big[\left\|\nabla f_N(f^{(N-1)}(\bbtheta^k);\xi_N^k) \right\|^2\Big]\EE\Big[\left\|\nabla f_1(\bbtheta^k;\xi_1^k)\cdots \nabla f_{N-1}(f^{(N-2)}(\bbtheta^k);\xi_{N-1}^k) \right\|^2\mid{\cal F}^{k,N}\Big]\mid{\cal F}^k\Bigg]^{\frac{1}{2}}\\
		&\leq C_N\EE\Bigg[\EE\Big[\left\|\nabla f_1(\bbtheta^k;\xi_1^k)\cdots \nabla f_{N-1}(f^{(N-2)}(\bbtheta^k);\xi_{N-1}^k) \right\|^2\mid{\cal F}^{k,N}\Big]\mid{\cal F}^k\Bigg]^{\frac{1}{2}}\\
		&\leq C_N\EE\Bigg[\EE\Big[\left\|\nabla f_{N-1}(f^{(N-2)}(\bbtheta^k);\xi_{N-1}^k) \right\|^2\Big]\\
&~~~~~~~~~~~~~~~~\times\EE\Big[\left\|\nabla f_1(\bbtheta^k;\xi_1^k)\cdots \nabla f_{N-2}(f^{(N-3)}(\bbtheta^k);\xi_{N-2}^k) \right\|^2\mid{\cal F}^k,\bby_1^{k+1},\ldots,\bby_{N-2}^{k+1}\Big]\mid{\cal F}^k\Bigg]^{\frac{1}{2}}\\
		&\leq C_{N-1}C_N\EE\Big[\EE\Big[\left\|\nabla f_1(\bbtheta^k;\xi_1^k)\cdots \nabla f_{N-1}(f^{(N-2)}(\bbtheta^k);\xi_{N-1}^k) \right\|^2\mid{\cal F}^k,\bby_1^{k+1},\ldots,\bby_{N-2}^{k+1}\Big]\mid{\cal F}^k\Big]^{\frac{1}{2}}\\
		&\leq C_1\cdots C_{n-1}C_{n+1}\cdots C_N.
\end{align*}}

For $J_n^k$, using Assumption m1, we have
{\small\begin{align*}
	J_n^k&=\EE\Big[\left\|\nabla f_n(\bby_{n-1}^{k+1};\xi_n^k)- \nabla f_n(f^{(n-1)}(\bbtheta^k);\xi_n^k)\right\|^2\mid{\cal F}^k\Big]^{\frac{1}{2}}\\
	&\leq L_n\EE\Big[\left\|\bby_{n-1}^{k+1}-f^{(n-1)}(\bbtheta^k)\right\|\mid{\cal F}^k\Big]. 
\end{align*}}

Plugging the above two upper bounds into \eqref{eq.pf.multi-lemma2-3-2}, we have
{\small\begin{align}\label{eq.pf.multi-lemma2-3-3-1}
\left\|\EE\left[\bm\nabla^k-\bar{\bm\nabla}^k\mid {\cal F}^k\right]\right\|\nonumber
=	&\, \Big\|\sum_{n=2}^N\EE\Big[\nabla f_1(\bbtheta^k;\xi_1^k)\cdots\nabla f_n(\bby_{n-1}^{k+1};\xi_n^k)\cdots f_N(f^{(N-1)}(\bbtheta^k);\xi_N^k)\nonumber\\
	 &~~~-\nabla f_1(\bbtheta^k;\xi_1^k)\cdots\nabla f_n(f^{(n-1)}(\bbtheta^k);\xi_n^k)\cdots\nabla f_N(f^{(N-1)}(\bbtheta^k);\xi_N^k)\mid{\cal F}^k\Big]\Big\|\nonumber\\
	 \leq  & \sum_{n=2}^N C_1\cdots C_{n-1}C_{n+1}\cdots C_NL_n\EE\Big[\left\|\bby_{n-1}^{k+1}-f^{(n-1)}(\bbtheta^k)\right\|\mid{\cal F}^k\Big]\nonumber\\
	  \stackrel{(b)}{\leq} &\,\sum_{n=2}^N C_1\cdots C_{n-1}C_{n+1}\cdots C_NL_n\EE\Big[\left\|\bby_{n-1}^{k+1}-f_{n-1}(\bby_{n-2}^{k+1})\right\|\mid{\cal F}^k\Big]\nonumber\\
	 &+\sum_{n=2}^N C_1\cdots C_{n-1}C_{n+1}\cdots C_NL_n\EE\Big[\left\|f_{n-1}(\bby_{n-2}^{k+1})-f^{(n-1)}(\bbtheta^k)\right\|\mid{\cal F}^k\Big]
\end{align}}
where (b) uses the triangular inequality. 

Using the $L_{n-1}$ Lipschitz continuity of $f^{(n-1)}$, we have
{\small\begin{align}\label{eq.pf.multi-lemma2-3-3-2}
\left\|\EE\left[\bm\nabla^k-\bar{\bm\nabla}^k\mid {\cal F}^k\right]\right\|\nonumber
	 \leq &\, \sum_{n=2}^N C_1\cdots C_{n-1}C_{n+1}\cdots C_NL_n\EE\Big[\left\|\bby_{n-1}^{k+1}-f_{n-1}(\bby_{n-2}^{k+1})\right\|\mid{\cal F}^k\Big]\nonumber\\
	 &+\sum_{n=2}^NC_1\cdots C_{n-1}C_{n+1}\cdots C_NL_n L_{n-1}\EE\Big[\left\|\bby_{n-2}^{k+1}-f^{(n-2)}(\bbtheta^k)\right\|\mid{\cal F}^k\Big].
\end{align}}

Repeating the steps in \eqref{eq.pf.multi-lemma2-3-3-1} and \eqref{eq.pf.multi-lemma2-3-3-2}, we can recursively obtain
{\small\begin{align}\label{eq.pf.multi-lemma2-3-3}
\left\|\EE\left[\bm\nabla^k-\bar{\bm\nabla}^k\mid {\cal F}^k\right]\right\|\nonumber
	 \leq &\, \sum_{n=2}^N C_1\cdots C_{n-1}C_{n+1}\cdots C_NL_n\EE\Big[\left\|\bby_{n-1}^{k+1}-f_{n-1}(\bby_{n-2}^{k+1})\right\|\mid{\cal F}^k\Big]\nonumber\\
	 &+\sum_{n=2}^N C_1\cdots C_{n-1}C_{n+1}\cdots C_NL_n L_{n-1}\EE\Big[\left\|\bby_{n-2}^{k+1}-f_{n-2}(\bby_{n-3}^{k+1})\right\|\mid{\cal F}^k\Big]\nonumber\\
	&\, +\sum_{n=2}^N C_1\cdots C_{n-1}C_{n+1}\cdots C_NL_n \cdots L_{n-2}\EE\Big[\left\|\bby_{n-3}^{k+1}-f_{n-3}(\bby_{n-4}^{k+1})\right\|\mid{\cal F}^k\Big]\nonumber\\
    &\, +\cdots+\sum_{n=2}^NC_1\cdots C_{n-1}C_{n+1}\cdots C_NL_n \cdots L_2\EE\Big[\left\|\bby_1^{k+1}-f_1(\bbtheta^k)\right\|\mid{\cal F}^k\Big]\nonumber\\
	 \stackrel{(c)}{=}&\sum_{n=2}^N\sum_{m=1}^{n-1}A_{m,n}\EE\Big[\left\|\bby_m^{k+1}-f_m(\bby_{m-1}^{k+1})\right\|\mid{\cal F}^k\Big]
\end{align}}
where (c) follows by defining 
%$A_{m,n}:=C_N\cdots C_{n+1}C_{n-1}\cdots C_1 L_n \cdots L_{m+1}$.
\begin{equation}
	A_{m,n}:=C_N\cdots C_{n+1}C_{n-1}\cdots C_1 L_n \cdots L_{m+1}.
\end{equation}

Therefore, using Assumption m3, we have 
{\small\begin{align}\label{eq.pf.multi-lemma2-4}
		\left\|\EE\left[\bm\nabla^k\mid {\cal F}^k\right]-\nabla F(\bbtheta^k)\right\|=&	\left\|\EE\left[\bm\nabla^k\mid {\cal F}^k\right]-\EE\left[\bar{\bm\nabla}^k\mid {\cal F}^k\right]\right\|\nonumber\\
		=&	\left\|\EE\left[\bm\nabla^k-\bar{\bm\nabla}^k\mid {\cal F}^k\right]\right\|\nonumber\\
		 \stackrel{(d)}{=} &\sum_{n=2}^N\sum_{m=1}^{n-1}A_{m,n}\EE\Big[\left\|\bby_m^{k+1}-f_m(\bby_{m-1}^{k+1})\right\|\mid{\cal F}^k\Big]\nonumber\\
		 \stackrel{(e)}{=}& \sum_{n=1}^{N-1}A_n\EE\Big[\left\|\bby_n^{k+1}-f_n(\bby_{n-1}^{k+1})\right\| \mid{\cal F}^k\Big]
\end{align}}
where (d) follows from \eqref{eq.pf.multi-lemma2-3-3} and (e) follows from $A_n:=\sum_{m=n+1}^{N-1} A_{n,m}$.
%Additionally,
%\begin{align}\label{eq.pf.multi-lemma2-5}
%    \left\|\EE\left[\bm\nabla^k\mid{\cal F}^k\right]-\nabla F(\bbtheta^k)\right\|
%    \leq &\, \EE\left[\left\|\EE\left[\bm\nabla^k\mid{\cal F}^{k,N}\right]-\nabla F(\bbtheta^k)\right\|\mid{\cal F}^k\right]\nonumber\\
%    \leq&\, \sum\limits_{n=1}^{N-1}A_n\EE[\|\bby_n^{k+1}-f_n(\bby_{n-1}^{k+1})\|\mid{\cal F}^k]
%\end{align}

\subsection{Remaining steps towards Theorem \ref{theorem-mSCSC}}
Using the smoothness of $F(\bbtheta^k)$ in \eqref{eq.smooth_const}, we have
{\small\begin{align*}
    F(\bbtheta^{k+1})&\leq F(\bbtheta^k)+\dotp{\nabla F(\bbtheta^k),\bbtheta^{k+1}-\bbtheta^k}+\frac{L}{2}\|\bbtheta^{k+1}-\bbtheta^k\|^2\\
    &= F(\bbtheta^k)-\alpha_k\dotp{\nabla F(\bbtheta^k),\bm\nabla^k}+\frac{L}{2}\|\bbtheta^{k+1}-\bbtheta^k\|^2\\
    &= F(\bbtheta^k)-\alpha_k\|\nabla F(\bbtheta^k)\|^2+\frac{L}{2}\|\bbtheta^{k+1}-\bbtheta^k\|^2+\alpha_k\dotp{\nabla F(\bbtheta^k),\nabla F(\bbtheta^k)-\bm\nabla^k}.
\end{align*}}

Conditioned on $\mathcal F^k$, taking expectation over $\xi_1,\ldots, \xi_N$, we have
{\small\begin{align*}
    &\EE\left[F(\bbtheta^{k+1})|\mathcal F^k\right]\\
 \leq &F(\bbtheta^k)-\alpha_k\|\nabla F(\bbtheta^k)\|^2+\frac{L}{2}\EE\left[\|\bbtheta^{k+1}-\bbtheta^k\|^2|\mathcal F^k\right]+\alpha_k\left\langle\nabla F(\bbtheta^k),\EE \left[\nabla F(\bbtheta^k)-\bm\nabla^k|\mathcal F^k\right]\right\rangle\\
%    \stackrel{(b)}{\leq}&F(\bbtheta^k)-\alpha_k\|\nabla F(\bbtheta^k)\|^2+\frac{L}{2}\EE[\|\bbtheta^{k+1}-\bbtheta^k\|^2|\mathcal F^k]\\
%    &\ \ \ +\alpha_k\left\|\nabla F(\bbtheta^k)\right\|\,\EE_{\phi^k}\left[\|\nabla g(\bbtheta^k;\phi^k)\|^2|\mathcal F^k\right]^{\frac{1}{2}}\EE_{\phi^k}\left[\|\nabla f(g(\bbtheta^k))-\nabla f(\bby^{k+1})\|^2|\mathcal F^k\right]^{\frac{1}{2}}\\
    \stackrel{(b)}{\leq}& F(\bbtheta^k)-\alpha_k\|\nabla F(\bbtheta^k)\|^2+\frac{L}{2}C_1^2\cdots C_N^2\alpha_k^2+\alpha_k \|\nabla F(\bbtheta^k)\|	\big\|\EE\left[\bm\nabla^k\mid{\cal F}^k\right]-\nabla F(\bbtheta^k)\big\|\\
    \stackrel{(c)}{\leq} &F(\bbtheta^k)-\alpha_k\|\nabla F(\bbtheta^k)\|^2+\frac{L}{2}C_1^2\cdots C_N^2\alpha_k^2+\alpha_k\sum_{n=1}^{N-1}A_n \|\nabla F(\bbtheta^k)\| \EE\left[\left\|\bby_n^{k+1}-f_n(\bby_{n-1}^{k+1})\right\||\mathcal F^k\right]\\
     \stackrel{(d)}{\leq}  &\,F(\bbtheta^k)\!-\!\alpha_k\!\!\left(1-\frac{\alpha_k}{4\beta_k}\sum_{n=1}^{N-1}A_n^2\right)\!\|\nabla F(\bbtheta^k)\|^2\!+\!\beta_k\!\!\sum_{n=1}^{N-1}\!\EE\left[\left\|\bby_n^{k+1}\!-\!f_n(\bby_{n-1}^{k+1})\right\|^2|\mathcal F^k\right]+\frac{L}{2}C_1^2\cdots C_N^2\alpha_k^2
\end{align*}}
where (b) uses the Cauchy-Schwartz; (c) follows from \eqref{eq.pf.multi-lemma2-4}; and (d) uses the Young's inequality. 

In addition, from the update \eqref{eq.SCSCm}, we have that
\begin{equation*}
(1-\beta_k)\left(\bby_{n-1}^{k+1}-\bby_{n-1}^k\right)\!=\!\beta_k\left(f(\bby_{n-2}^{k+1};\xi_{n-1}^k)-\bby_{n-1}^{k+1}\right)+(1-\beta_k)\left(f(\bby_{n-2}^{k+1};\xi_{n-1}^k)-f(\bby_{n-2}^k;\xi_{n-1}^k)\right).
\end{equation*}
Squaring both sides, and taking expectation conditioned on ${\cal F}^k$, we have 
{\small\begin{align}\label{eq.pf.multi-lemma2-2}
&\, \EE\left[\|\bby_{n-1}^{k+1}-\bby_{n-1}^k\|^2\mid{\cal F}^k\right]\\
\stackrel{(g)}{\leq} &\, 2\left(\frac{\beta_k}{1-\beta_k}\right)^2\!\!\EE\left[\|f_{n-1}(\bby_{n-2}^{k+1};\xi_{n-1}^k)-f_{n-1}(\bby_{n-2}^{k+1})+f_{n-1}(\bby_{n-2}^{k+1})-\bby_{n-1}^{k+1}\|^2\mid{\cal F}^k\right]\nonumber\\
&\, +2\EE\left[\left\|f_{n-1}(\bby_{n-2}^{k+1};\xi_{n-1}^k)-f_{n-1}(\bby_{n-2}^k;\xi_{n-1}^k)\right\|^2\mid{\cal F}^k\right]\nonumber\\
\leq&\, 2\left(\frac{\beta_k}{1-\beta_k}\right)^2\EE\left[\|\bby_{n-1}^{k+1}-f_{n-1}(\bby_{n-2}^{k+1})\|^2\mid{\cal F}^k\right] +2C_{n-1}^2\EE\left[\|\bby_{n-2}^{k+1}-\bby_{n-2}^k\|^2\mid{\cal F}^k\right]+2\left(\frac{\beta_k}{1-\beta_k}\right)^2V^2\nonumber
\end{align}}
where (g) follows from the Young's inequality. 

On the other hand, with the definition of Lyapunov function in \eqref{eq.Lyap2}, it follows that
{\small\begin{align}\label{eq.thm-mSCSC-1}
\!\EE[{\cal V}^{k+1}|\mathcal F^k]\leq&\, {\cal V}^k-\alpha_k\left(1-\frac{\alpha_k}{4\beta_k}\sum_{n=1}^{N-1}A_n^2\right)\|\nabla F(\bbtheta^k)\|^2+\frac{L}{2}C_1^2\cdots C_N^2\alpha_k^2-\sum_{n=1}^{N-1}\left\|\bby_n^k-f_n(\bby_{n-1}^k)\right\|^2 \nonumber\\
    &\, +(1+2\beta_k)\sum_{n=1}^{N-1}\EE\left[\left\|\bby_n^{k+1}-f_n(\bby_{n-1}^{k+1})\right\|^2|\mathcal F^k\right]  -\beta_k\sum_{n=1}^{N-1}\EE\left[\left\|\bby_n^{k+1}-f_n(\bby_{n-1}^{k+1})\right\|^2|\mathcal F^k\right]\nonumber\\  
    \stackrel{(e)}{\leq}&\, {\cal V}^k-\alpha_k\left(1-\frac{\alpha_k}{4\beta_k}\sum_{n=1}^{N-1}A_n^2\right)\|\nabla F(\bbtheta^k)\|^2+\frac{L}{2}C_1^2\cdots C_N^2\alpha_k^2+2(1+2\beta_k)\beta_k^2V^2\nonumber\\
    &\, +\left((1+2\beta_k)(1-\beta_k)^2-1\right)\sum_{n=1}^{N-1}\left\|\bby_n^k-f_n(\bby_{n-1}^k)\right\|^2\nonumber\\
    &\, +4(1+2\beta_k)(1-\beta_k)^2C_1^2\EE\left[\|\bbtheta^k-\bbtheta^{k-1}\|^2|\mathcal F^k\right]\nonumber\\
    &\, +\sum_{n=2}^{N-1}\left[4(1+2\beta_k)(1-\beta_k)^2C_n^2+\gamma_n\right]\EE\left[\|\bby_{n-1}^{k+1}-\bby_{n-1}^k\|^2|\mathcal F^k\right] \nonumber\\
     &\, -\sum_{n=2}^{N-1}\gamma_n\EE\left[\|\bby_{n-1}^{k+1}-\bby_{n-1}^k\|^2|\mathcal F^k\right] -\beta_k\sum_{n=1}^{N-1}\EE\left[\|\bby_n^{k+1}-f_n(\bby_{n-1}^{k+1})\|^2|\mathcal F^k\right]\nonumber\\
     \stackrel{(f)}{\leq}&\, {\cal V}^k-\alpha_k\left(1-\frac{\alpha_k}{4\beta_k}\sum_{n=1}^{N-1}A_n^2\right)\|\nabla F(\bbtheta^k)\|^2+\frac{L}{2}C_1^2\cdots C_N^2\alpha_k^2+2(1+2\beta_k)\beta_k^2V^2\nonumber\\
     &\, +4C_1^2\EE\left[\|\bbtheta^k-\bbtheta^{k-1}\|^2\mid{\cal F}^k\right]+\sum\limits_{n=2}^{N-1}(4C_n^2+\gamma_n)\EE\left[\|\bby_{n-1}^{k+1}-\bby_{n-1}^k\|^2\mid{\cal F}^k\right]\nonumber\\
     &\, -\sum\limits_{n=2}^{N-1}\gamma_n\EE\left[\|\bby_{n-1}^{k+1}-\bby_{n-1}^k\|^2\mid{\cal F}^k\right]-\beta_k\sum\limits_{n=1}^{N-1}\EE\left[\|\bby_n^{k+1}-f_n(\bby_{n-1}^{k+1}\|^2\mid{\cal F}^k\right]
 \end{align}}
where (e) follows from Lemma \ref{multi-lemma2}, $\bby_0^{k+1}=\bbtheta^k, \bby_0^k=\bbtheta^{k-1}$ and $\gamma_n>0$ is a fixed constant; (f) uses that $4(1+2\beta_k)(1-\beta_k)^2C_1^2\EE\left[\|\bbtheta^k-\bbtheta^{k-1}\|^2|\mathcal F^k\right]\leq 4C_1^2\EE\left[\|\bbtheta^k-\bbtheta^{k-1}\|^2|\mathcal F^k\right]$.

Plugging \eqref{eq.pf.multi-lemma2-2} into \eqref{eq.thm-mSCSC-1}, we have
{\small\begin{align}\label{eq.thm-mSCSC-2}
\!\!\! \EE[{\cal V}^{k+1}|\mathcal F^k]&\leq {\cal V}^k-\alpha_k\left(1-\frac{\alpha_k}{4\beta_k}\sum_{n=1}^{N-1}A_n^2\right)\|\nabla F(\bbtheta^k)\|^2+\frac{L}{2}C_1^2\cdots C_N^2\alpha_k^2+4 C_1^2\|\bbtheta^k-\bbtheta^{k-1}\|^2\nonumber\\
&\ \ \ +\Bigg(2(1+2\beta_k)\beta_k^2+2\left(\frac{\beta_k}{1-\beta_k}\right)^2\sum_{n=2}^{N-1}(4C_n^2+\gamma_n)\Bigg)V^2\nonumber\\
    &\ \ \ +2\left(\frac{\beta_k}{1-\beta_k}\right)^2\sum_{n=2}^{N-1}(4C_n^2+\gamma_n)\EE\left[\|\bby_{n-1}^{k+1}-f_{n-1}(\bby_{n-2}^{k+1})\|^2\mid{\cal F}^k\right]\nonumber\\
    &\ \ \ +2\sum_{n=2}^{N-1}(4C_n^2+\gamma_n)C_{n-1}^2\EE\left[\|\bby_{n-2}^{k+1}-\bby_{n-2}^k\|^2|\mathcal F^k\right]\nonumber\\
    &\ \ \ -\sum_{n=2}^{N-1}\gamma_n\EE\left[\|\bby_{n-1}^{k+1}-\bby_{n-1}^k\|^2|\mathcal F^k\right] -\beta_k\sum_{n=1}^{N-1}\EE\left[\|\bby_n^{k+1}-f_n(\bby_{n-1}^{k+1})\|^2|\mathcal F^k\right].
 \end{align}}
 
 Choose parameters $\{\gamma_n\}$ and $\{\beta_k\}$ such that
 \begin{align*}
  2(4C_n^2+\gamma_n)C_{n-1}^2\leq \gamma_{n-1}~~~{\rm and}~~~
 2\left(\frac{\beta_k}{1-\beta_k}\right)^2(4C_n^2+\gamma_n)\leq\beta_k.
 \end{align*}
For $\gamma_n$, the condition can be satisfied by choosing
\begin{equation}
\gamma_{N-1}=0,~ \gamma_{N-2}=8 C_{N-1}^2C_{N-2}^2,~ \gamma_{N-3}=16 C_{N-1}^2C_{N-2}^2C_{N-3}^2+8C_{N-2}^2C_{N-3}^2,~~~\cdots
\end{equation} 
For $\beta_k$, the condition can be satisfied by solving  following inequality that always has a solution
\begin{equation}
   \beta_k\leq \frac{\left(1-2\beta_k+(\beta_k)^2\right) C_{n-1}^2}{\gamma_{n-1}}.
\end{equation} 
 
Select $\beta_k=\beta\leq \frac{1}{2}$ and $\alpha_k=\alpha=\frac{2\beta}{\sum_{n=1}^{N-1}A_n^2}$ so that $1-\frac{\alpha_k}{4\beta_k}\sum_{n=1}^{N-1}A_n^2=\frac{1}{2}$, and define 
\begin{equation*}
    B_2:=\left(\frac{L}{2}+4C_1^2+8C_1^2C_2^2+2\gamma_2C_1^2\right)C_1^2\cdots C_N^2~~~{\rm and}~~~B_3:=4\Bigg(1+2\sum_{n=2}^{N-1}(4C_n^2+\gamma_n)\Bigg)V^2.
\end{equation*}
Plugging into \eqref{eq.thm-mSCSC-2} leads to
{\small\begin{align}\label{eq.thm-mSCSC-3}
\!\!\! \EE[{\cal V}^{k+1}]&\leq \EE[{\cal V}^k]-\frac{\alpha}{2}\EE[\|\nabla F(\bbtheta^k)\|^2]+\frac{L}{2}C_1^2\cdots C_N^2\alpha^2+2(4C_2^2+\gamma_2+2)C_1^2\EE\left[\|\bbtheta^k-\bbtheta^{k-1}\|^2\right]\nonumber\\
&\ \ \ +\Bigg(2(1+2\beta)\beta^2+2\left(\frac{\beta}{1-\beta}\right)^2\sum_{n=2}^{N-1}(4C_n^2+\gamma_n)\Bigg)V^2 \nonumber\\
& \leq \EE[{\cal V}^k]-\frac{\alpha}{2}\EE[\|\nabla F(\bbtheta^k)\|^2]+\left(\frac{L}{2}+4C_1^2+8C_1^2C_2^2+2\gamma_2\right)C_1^2\cdots C_N^2\alpha^2\nonumber\\
&\ \ \ +2\Bigg(1+2\beta+4\sum_{n=2}^{N-1}\left[4C_n^2+\gamma_n\right]\Bigg)V^2\beta^2\nonumber\\
& \leq \EE[{\cal V}^k]-\frac{\alpha}{2}\EE[\|\nabla F(\bbtheta^k)\|^2]+B_2\alpha^2+B_3 \beta^2.
\end{align}}

Choosing the stepsize as $\alpha_k=\frac{c_{\alpha}}{\sqrt{K}}$ leads to 
{\small \begin{align*}   
    \frac{\sum_{k=0}^{K-1}\EE[\|\nabla F(\bbtheta^k)\|^2]}{K}\leq
    \frac{2{\cal V}^0}{K\alpha}+2B_2\alpha+2B_3\frac{\beta^2}{\alpha}=\frac{2{\cal V}^0+2(B_2+B_3{(\sum_{n=1}^{N-1}A_n^2)^2}/{4})}{\sqrt{K}}.
\end{align*}}
This completes the proof of Theorem \ref{theorem-mSCSC}.

\end{document}